\tikzset{cross/.style={cross out, draw=black, minimum size=2*(#1-\pgflinewidth), inner sep=0pt, outer sep=0pt}, cross/.default={1pt}}
\numberwithin{equation}{section}
\theoremstyle{plain}
\newtheorem{thm}{Theorem}[section]
\newtheorem{lem}[thm]{Lemma}
\newtheorem{prop}[thm]{Proposition}
\newtheorem{cor}[thm]{Corollary}
\newtheorem*{prop*}{Proposition}
\theoremstyle{definition}
\newtheorem{exmp}[thm]{Example}
\theoremstyle{remark}
\newtheorem{rem}[thm]{Remark}
\newtheorem*{rem*}{Remark}
\newcommand{\be}{\begin{equation}}    
\newcommand{\ee}{\end{equation}}    
\newcommand{\beu}{\begin{equation*}}    
\newcommand{\eeu}{\end{equation*}}    
\newcommand{\bea}{\begin{eqnarray}}    
\newcommand{\eea}{\end{eqnarray}}    
\newcommand{\beaa}{\begin{eqnarray*}}    
\newcommand{\eeaa}{\end{eqnarray*}}    
\newcommand{\bmx}{\begin{pmatrix}}    
\newcommand{\emx}{\end{pmatrix}}
\newcommand{\ul}{\underline}    
\newcommand{\del}{\partial}    
\newcommand{\g}{{\mathfrak g}}
\newcommand{\n}{{\mathfrak n}}    
\newcommand{\h}{{\mathfrak h}}
\newcommand{\p}{{\mathfrak p}}
\newcommand{\B}{\mathcal B}
\newcommand{\Heis}{H}
\newcommand{\mf}{\mathfrak}
\newcommand{\mc}{\mathcal}    
\newcommand{\gh}{{\widehat \g}}    
\newcommand{\hh}{{\widehat \h}}
\newcommand{\nG}{{\n_{\langle G_i\rangle}}}
\newcommand{\gs}{{\g^{\sigma}}}
\newcommand{\D}{\mathcal D}
\newcommand{\half}{\frac{1}{2}}
\newcommand{\nn}{\nonumber}
\newcommand{\8}{{\infty}}
\newcommand{\eps}{\epsilon}    
\newcommand{\tr}{\,{\rm tr}}    
\newcommand{\rank}{{\rm rank}}
\newcommand{\ad}{{\rm ad}}
\newcommand{\Z}{{\mathbb Z}}
\newcommand{\C}{{\mathbb C}}
\newcommand{\bra}[1]{{\,\left<#1\right|}\,}    
\newcommand{\ket}[1]{{\left|#1\right>}\,}
\newcommand{\id}{{\mathrm{id}}}
\newcommand{\A}[2]{A_{#1,q^{#2}}}
\newcommand{\goi}[2]{=}    
\newcommand{\Hom}{\mathrm{Hom}}
\newcommand{\on}{.}    
\newcommand{\Cx}{\mathbb C^\times}
\renewcommand{\binom}[2]{{#1 \brack #2}}
\newcommand{\btp}{\begin{tikzpicture}[baseline=0pt,scale=0.9,line width=0.25pt]}    
\newcommand{\etp}{\end{tikzpicture}}    
\newcommand{\Ron}{\color{blue}}
\newcommand{\Roff}{\color{black}}
\newcommand{\path}{\longrightarrow}
\newcommand{\ha}{\mbox{\small $\frac{1}{2}$}}
\newcommand{\atp}[2]{\underset{\substack{$ $ \\ \tikz{
\draw[->] (0,0) -- (0,.13);} \\[-.2mm] {#1}}}{\smash{#2}}}
\DeclareMathOperator{\res}{res}
\newcommand{\nord}[1]{:\!\! #1 \!\!:}
\newcommand*{\longhookrightarrow}{\ensuremath{\lhook\joinrel\relbar\joinrel\rightarrow}}
\newcommand{\VV}{{\mathbb V}}
\newcommand{\Vcrit}{{\mathbb V_0^{-h^\vee}}}
\newcommand{\WW}{{\mathbb W}}
\newcommand{\MM}{{\mathbb M}}
\newcommand{\M}{\mathcal M}
\newcommand{\vac}{v_0}
\newcommand{\wac}{\ket{}\!}
\newcommand{\lwac}{\bra{}\!}
\newcommand{\resd}{\vee}
\newcommand{\pdd}{{{\frac{1}{(n-1)!}\frac{\del^{n-1}}{\del u^{n-1}}}}}
\newcommand{\lsigma}{L_{\sigma}}
\newcommand{\CP}{\mathbb C P}
\newcommand{\Gaud}{\mathscr Z}
\newcommand{\Mh}{\mathsf M} 
\newcommand{\ie}{\textit{i.e. }}
\newcommand{\la}{\big\langle}
\newcommand{\ra}{\big\rangle}
\newcommand{\zs}{\bm z}
\newcommand{\xs}{\bm x}
\newcommand{\ns}{\bm n}
\newcommand{\nsa}{{n_\8,\ns,n_0}}
\newcommand{\zsa}{{\8,\zs,0}}
\newcommand{\xsa}{{\8,\xs,0}}
\newcommand{\gG}{\g_{\8,\zs,0}^\Gamma}
\newcommand{\gN}{\gh_{\8,N,0}}
\newcommand{\gNpp}{\gh_{\8,N,0}^{+\!\!+}}
\newcommand{\gGu}{\g_{\8,\zs,u,0}^\Gamma}
\newcommand{\gNu}{\gh_{\8,N+1,0}}
\newcommand{\Upp}{U(\gNpp)}
\newcommand{\MMx}{\MM}
\newcommand{\Mx}{\M}
\newcommand{\cai}{\varphi}
\newcommand{\vla}{\mathscr L}
\newcommand{\vma}{\mathscr M}
\newcommand{\ueva}{\VV(\vla)}
\newcommand{\ueval}{\VV(\vma)}
\newcommand{\I}{\mathscr I}
\newcommand{\cent}{\mathsf c} 
\newcommand{\op}{\mathsf{op}}
\newcommand{\into}{\hookrightarrow}
\newcommand{\onto}{\twoheadrightarrow}
\newcommand{\longinto}{\lhook\joinrel\relbar\joinrel\rightarrow}
\newcommand{\longonto}{\relbar\joinrel\twoheadrightarrow}
\newcommand{\tak}[1]{\mathcal T_{#1}}
\newcommand{\ox}{\otimes}
\newcommand{\ssc}{\mf z}
\newcommand{\ZT}{\Z/T\Z}
\newcommand{\dell}[2]{\del^{(#1)}_{#2}}
\newcommand{\Achi}{\mc A_{\8,\zs,0}^{1,(1),1}(\g,\sigma,\chi)^\Gamma}
\newcommand{\U}{\mathsf L}
\DeclareMathOperator{\Ind}{Ind}
\DeclareMathOperator{\Coind}{Coind}
\DeclareMathOperator{\End}{End}
\DeclareMathOperator{\Vect}{Vect}
\DeclareMathOperator{\Weyl}{Weyl}
\DeclareMathOperator{\Der}{Der}
\DeclareMathOperator{\Homres}{Hom}
\DeclareMathOperator{\Lie}{Lie}
\author{Beno\^{\i}t Vicedo} 
\author{Charles Young}
\address{\vspace{-.15cm} 
School of Physics, Astronomy and Mathematics, University of Hertfordshire, College Lane, Hatfield AL10 9AB, UK.}  
\email{benoit.vicedo@gmail.com}  
\email{c.a.s.young@gmail.com}
\begin{document} 
\title{Cyclotomic Gaudin models with irregular singularities}

\begin{abstract} 
Generalizing the construction of the cyclotomic Gaudin algebra from \cite{VY1}, we define the \emph{universal cyclotomic Gaudin algebra}. It is a cyclotomic generalization of the Gaudin models with irregular singularities defined in \cite{FFT}. 

We go on to solve, by Bethe ansatz, the special case in which the Lax matrix has simple poles at the origin and arbitrarily many finite points, and a double pole at infinity.

\end{abstract}
\maketitle
\setcounter{tocdepth}{1}
\tableofcontents

\section{Introduction}
Pick a primitive $T$th root of unity $\omega\in \Cx$, for some non-negative integer $T$ and let $\Gamma := \la \omega \ra\subset \Cx$ denote a copy of the cyclic group $\Z/T\Z$. Let $\g$ be a finite-dimensional semisimple Lie algebra and $\sigma:\g\to\g$ an automorphism whose order divides $T$. 

Associated to these data is a \emph{cyclotomic Gaudin algebra} \cite{VY1}. It is a large commutative subalgebra of $U(\g^{\oplus N})^{\g^\sigma}$, depending on a choice of non-zero marked points $\zs = \{ z_1,\dots,z_N \}$ in the complex plane whose $\Gamma$-orbits are pairwise disjoint. 
It is generated by a hierarchy of Hamiltonians, among which are quadratic Hamiltonians $\mc H_1$, \dots, $\mc H_N$ that 
have appeared previously in \cite{Skrypnyk1,Skrypnyk2} -- see also \cite{CrampeYoung} -- and, in the context of cyclotomic KZ equations, in \cite{Brochier}. It defines a quantum integrable model generalizing the quantum Gaudin model \cite{Gaudin}, to which it reduces in the special case $T=1$.

The cyclotomic Gaudin algebra was constructed in \cite{VY1} using the technology of coinvariants/conformal blocks of $\gh$-modules of critical level, following \cite{FFR}. The relevant coinvariants in this case are $\Gamma$-equivariant; see \cite{VY2}.

Now, in fact, this approach using coinvariants naturally gives commutative subalgebras not just of $U(\g^{\oplus N})$ but of the larger algebra $U\left(\bigoplus_{i=1}^N \g[[t-z_i]]\right)$, where $\g[[t-z]]\cong \g[[t]]$ is the half loop algebra.
Moreover, in the cyclotomic setting it is natural to include also $0$ and $\8$ as marked points. These are the fixed points of the action of $\Gamma$, and to them one attaches twisted half loop algebras, respectively $\g[[t]]^\Gamma$ and $(t^{-1}\g^{\op}[[t^{-1}]])^\Gamma$ (see \S\ref{opsec}). The first main result of the present sequel to \cite{VY1} is thus to construct, in \S\ref{ZTGsec}, a large commutative subalgebra \be\nn\Gaud_{\zsa}(\g,\sigma)^\Gamma\subset U\left((t^{-1} \g^{\op}[[t^{-1}]])^\Gamma \oplus \bigoplus_{i=1}^N\g[[t-z_i]] \oplus (\g[[t]])^\Gamma\right)^{\g^\sigma}.\nn\ee 
It is the cyclotomic generalization of the \emph{universal Gaudin algebra} defined in \cite{FFT}. 

Quotients of half loop algebras of the form $\g[[t]] / t^n\g[[t]]$  are called \emph{(generalized) Takiff} algebras. Taking such quotients of $\Gaud_{\zsa}(\g,\sigma)^\Gamma$  one obtains commutative algebras $\Gaud_{\zsa}^\nsa(\g,\sigma)^\Gamma$.\footnote{Actually, we define these $\Gaud_{\zsa}^\nsa(\g,\sigma)^\Gamma$ first, and then the universal algebra $\Gaud_{\zsa}(\g,\sigma)^\Gamma$ is their inverse limit.} 
In particular, one recovers the cyclotomic Gaudin algebra of \cite{VY1} as the special case
\begin{align} \Gaud_{\zsa}^{1,(1),0}(\g,\sigma)^\Gamma \subset 
U(0\oplus \g^{\oplus N} \oplus 0)^{\g^\sigma}. \nn
\end{align}
More generally, following \cite{FFT} it is natural to call the integrable models defined by representing $\Gaud_{\zsa}^\nsa(\g,\sigma)^\Gamma$ on tensor products of modules over Takiff algebras, cyclotomic Gaudin models \emph{with irregular singularities}. 
Among the simplest possibilities is to introduce one irregular singularity, as mild as possible, at $\8$; that is, to consider
\begin{align} \Gaud_{\zsa}^{2,(1),1}(\g,\sigma)^\Gamma 
&\subset U\left( (\Pi_{-1} \g) \oplus \g^{\oplus N} \oplus \g^{\sigma}\right)^{\g^\sigma}.\nn
\end{align}
Here $\Pi_{-1}\g\subset \g$ denotes the $\omega^{-1}$-eigenspace of $\sigma$. It is to be regarded here as a commutative Lie algebra: it arises as the quotient $\Pi_{-1} \g \cong_\C (t^{-1} \g[[t^{-1}]])^{\Gamma}\big/ (t^{-2}\g[[t^{-1}]] )^\Gamma$. Suppose we now pick a one-dimensional representation of this commutative Lie algebra \ie a linear map $\chi: \Pi_{-1} \g \to \C$. Applying this map we obtain a commutative subalgebra 
 \be \Achi \subset \left(U(\g)^{\ox N}\ox U(\g^\sigma)\right)^{\g^\sigma_\chi},\nn\ee
where $\g^\sigma_\chi$ denotes the centralised of $\chi$ under the coadjoint action of $\g^\sigma$ on $(\Pi_{-1} \g)^\ast$. In the special case of $N=0$ (\ie only one marked point, at the origin) this is a cyclotomic generalization of the \emph{quantum shift-of-argument} subalgebra of \cite{Ryb06}; see also \cite{FFRb}. 

In the remainder of the paper we go on to diagonalize the Hamiltonians generating $\Achi$ on tensor products of Verma modules, by means of a Bethe ansatz. We assume that $\chi$, and the highest weights $\lambda_1,\dots,\lambda_N,\lambda_0$ of these Verma modules, all belong to the dual of a single Cartan subalgebra, and that this Cartan subalgebra is stable under $\sigma$. Under these assumptions one can apply the approach to the Bethe ansatz for Gaudin models from \cite{FFR,Freview}, which uses coinvariants of a particular class of $\gh$-modules at critical level called \emph{Wakimoto modules}. 
See \S\ref{sec: mr}, Theorem \ref{mthm}, for the precise statement of the result. 

Finally, in the special case $\chi=0$ we prove that the Bethe vectors are singular. See Theorem \ref{chi0thm}.

Let us conclude this introduction with some remarks and open questions.

As discussed in \cite{FFT}, the origin of the term \emph{irregular singularities} comes from the description of the spectrum of Gaudin algebras in terms of \emph{opers}. The notion of opers with regular singularities was recently extended to the cyclotomic setting in \cite{Lacroix:2016mpg}, and it was conjectured that the spectrum of the cyclotomic Gaudin algebra $\Gaud_{\zsa}^{1,(1),0}(\g,\sigma)^\Gamma$ admits a description in terms of such cyclotomic opers, or $\Gamma$-equivariant opers. It would be interesting to extend the definition of cyclotomic opers to include the case of irregular singularities and relate these to the spectrum of $\Gaud_{\zsa}^\nsa(\g,\sigma)^\Gamma$ defined in the present paper.

The quadratic Hamiltonians of the algebra $\Achi$ are the cyclotomic analogs of the Gaudin models considered in \cite{FMTV}, which exhibit a certain bispectrality property. It would be interesting to investigate bispectrality in the cyclotomic setting, in the spirit of that paper.

Cyclotomic analogs of the KP hierarchy were defined recently in \cite{CS}. This construction involves a generalization of (the completion of) Calogero-Moser phase space, which can be seen as a quiver variety whose underlying quiver has a single loop, to quiver varieties for cyclic quivers. Calogero-Moser space is known to be related to Gaudin algebras (Bethe algebras) \cite{MTVcalogero}, so it is natural to hope for a similar relation in the cyclotomic setting.

\section{The cyclotomic Gaudin model}\label{ZTGsec}
\subsection{Rational functions and formal series} We work over $\C$. For any formal variable $t$, we have the ring of polynomials $\C[t]$, the ring of formal power series $\C[[t]]$, and the field of formal Laurent series $\C((t))$. Given a finite collection of points $\xs = \{ x_1,\dots,x_p \} \subset \C$ let $\C_{\8,\xs}(t)$ denote the localization of $\C[t]$ by the multiplicative subset generated by $t-x_1,\dots,t-x_p$. Elements of $\C_{\8,\xs}(t)$ are rational functions in $t$ with poles at most at the points $x_1,\dots,x_p$ and at infinity. 

For any $z\in \C$ we have the map $\iota_{t-z} : \C_{\8,\xs}(t) \to \C((t-z))$ which returns the Laurent expansion $\iota_{t-z} f(t)$ of a rational function $f(t)$ about $t=z$. 
We have also $\iota_{t^{-1}}: \C_{\8,\xs}(t) \to \C((t^{-1}))$ which returns the Laurent expansion $\iota_{t^{-1}}f(t)$ of $f(t)$ in powers of $t^{-1}$. The maps $\iota_{t-z}$ and $\iota_{t^{-1}}$ are both injective homomorphisms of $\C$-algebras.

Let $\res_{t}: \C((t))  \to \C$ be the map which returns the coefficient of $t^{-1}$. 
For any $f(t) \in \C_{\8,\xs}(t)$ we have
\be  - \res_{t^{-1}} t^2 \iota_{t^{-1}} f(t)  + \sum_{i=1}^p \res_{t-x_i} \iota_{t-x_i} f(t)= 0.\label{zr}\ee
(This is equivalent to the statement that the sum of the residues of a meromorphic one-form $f(t) dt$ on $\CP^1$ vanishes.)

\subsection{Opposite Lie algebras and left vs. right modules}\label{opsec}
Given a complex Lie algebra $\mf a$ with Lie product $[\cdot,\cdot]: \mf a \ox \mf a \to \mf a$ we write $\mf a^\op$ for the opposite Lie algebra, namely the vector space $\mf a$ endowed with the Lie product $[X,Y]^\op := [Y,X]$. The Lie algebras $\mf a$ and $\mf a^\op$ are isomorphic (by e.g. $X\mapsto -X$) but it will be useful to regard them as two distinct Lie algebra structures on the same underlying vector space. Modules over $\mf a$ are naturally identified with left modules over the envelope $U(\mf a)$; modules over $\mf a^\op$ are naturally identified with right modules over the envelope $U(\mf a)$.

\subsection{Marked points and the group $\Gamma$}
Let $\omega$ be a root of unity of order $T\in \Z_{\geq 1}$. The cyclic group $\Gamma := \langle \omega \rangle \cong \Z/T\Z$ acts on the Riemann sphere $\CP^1= \C\cup \{\8\}$ by multiplication. 
The fixed points of this action are $0$ and $\8$. Pick $N\in \Z_{\geq 0}$ points $z_1,\dots,z_N \in \CP^1\setminus\{0,\8\}$ whose $\Gamma$-orbits are disjoint: $\Gamma z_i \cap \Gamma z_j = \emptyset$ whenever $i\neq j$. We write $\zs = \{z_1,\dots,z_N\}$.

Let $\g$ be a finite-dimensional simple Lie algebra over $\C$ and $\sigma: \g \to \g$ an automorphism of $\g$ whose order divides $T$.
Let $\la\cdot,\cdot\ra$ denote the invariant inner product on $\g$ normalised such that long roots have square length 2.
Let $\Pi_k$, $k\in \Z/T\Z$, be the projectors 
\be \Pi_k := \frac 1 T\sum_{m=0}^{T-1} \omega^{-mk} \sigma^m : \g \to \g\label{Pidef}\ee 
onto the eigenspaces of $\sigma$. They obey $\sum_{k\in \Z/T\Z} \Pi_k=\id$. We write $\g^\sigma$ for the subalgebra of invariants, 
\be \g^\sigma:= \Pi_0 \g.\nn\ee

Denote by $\g_\zsa^{\Gamma,k}$, $k\in \Z/T\Z$,  the Lie algebra of those $\g$-valued rational functions $f(t)$ of a formal variable $t$ that have no poles outside the set of points $\{0,\8\} \cup \Gamma\zs$ and that obey the equivariance condition $\omega^*f = \omega^k \sigma f$, \ie
\be f(\omega t) = \omega^k \sigma f(t) .\nn\ee
Let also  $(\g \ox \C((t^{\pm 1})))^{\Gamma,k} := \{ f(t^{\pm 1}) \in \g\ox \C((t^{\pm 1})): f(\omega^{\pm 1} t^{\pm 1}) = \omega^k \sigma f(t^{\pm 1})\}$. 
For brevity we write $\gG := \g_\zsa^{\Gamma,0}$ and $(\g \ox \C((t^{\pm 1})))^{\Gamma} := (\g \ox \C((t^{\pm 1})))^{\Gamma,0}$, etc. 

There is an injective homomorphism of $\C$-algebras 
\be  \g\ox \C_\zsa(t) \longrightarrow \g^\op \ox \C((t^{-1}))  \oplus \bigoplus_{i=1}^N \g\ox \C((t-z_i)) \oplus  \g\ox \C((t)) \nn\ee
defined by
\be f(t) \longmapsto ( - \iota_{t^{-1}} f(t); \iota_{t-z_1} f(t),\dots, \iota_{t-z_N} f(t); \iota_t(t)  ) \nn\ee
(note the $\op$ and minus sign in our conventions). 

\begin{lem}[$\Gamma$-equivariant Strong residue theorem]\label{lem:gsrt}
A tuple of formal series $$(f_\8; f_{z_1},\dots,f_{z_N}; f_0) \in (\g^\op \ox \C((t^{-1})))^{\Gamma,k}  \oplus \bigoplus_{i=1}^N \g \ox \C((t-z_i)) \oplus (\g \ox \C((t)))^{\Gamma,k}$$ belongs to $(-\iota_{t^{-1}}; \iota_{t-z_1},\dots, \iota_{t-z_N}; \iota_t)(\g_\zsa^{\Gamma,k})$, \ie they are the Laurent expansions of some rational function in $\g_\zsa^{\Gamma,k}$, if and only if
\be - \frac 1 T\res_{t^{-1}} t^2 \la f_\8 ,-\iota_{t^{-1}} g(t) \ra  +  \sum_{i=1}^N \res_{t-z_i} \la f_{z_i}, \iota_{t-z_i} g(t)\ra  + \frac 1T \res_{t} \la f_0, \iota_t(g)\ra 
 = 0\ee
for all $g(t) \in \g_\zsa^{\Gamma,-k-1}$.
\end{lem}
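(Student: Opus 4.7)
My plan is to argue the two directions separately. The forward implication reduces to the classical residue theorem (2.1) combined with a $\Gamma$-averaging. The reverse implication reduces, via the non-equivariant strong residue theorem, to an isotypic decomposition of the test function $g$.

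\emph{Necessity.} Given $f(t) \in \g_\zsa^{\Gamma,k}$ with expansions $(f_\8;f_{z_1},\dots,f_{z_N};f_0)$ and any $g(t) \in \g_\zsa^{\Gamma,-k-1}$, the scalar rational function $h(t) := \la f(t),g(t)\ra$ lies in $\C_{\8,\Gamma\zs,0}(t)$. The $\sigma$-invariance of $\la\cdot,\cdot\ra$, combined with the weights $k$ and $-k{-}1$ of $f$ and $g$, gives $h(\omega t) = \omega^{-1} h(t)$, so the meromorphic $1$-form $h(t)\,dt$ on $\CP^1$ is $\Gamma$-invariant. Apply (2.1), extended to admit a pole at $0$, to $h(t)$. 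The change of variable $t = \omega^m u$ identifies $\res_{t=\omega^m z_i} h(t)\,dt$ with $\res_{u=z_i} h(u)\,du$, so each Galois orbit $\{\omega^m z_i\}_{m=0}^{T-1}$ contributes $T$ equal residues, while the fixed points $0$ and $\8$ contribute once each. Dividing by $T$ and re-expressing the residues at $0,\8$ in terms of $f_0 = \iota_t f$ and $f_\8 = -\iota_{t^{-1}} f$ yields the stated identity.

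\emph{Sufficiency.} Conversely, suppose the residue identity holds for every $g \in \g_\zsa^{\Gamma,-k-1}$. First extend the local data over all of $\Gamma\zs$ by the equivariance that the desired $f$ must satisfy: for each $i$ and $m\in\{0,\dots,T-1\}$, set
\beu f_{\omega^m z_i}(t) := \omega^{mk}\,\sigma^m\, f_{z_i}(\omega^{-m} t) \in \g\ox\C((t-\omega^m z_i)). \eeu
By the classical (non-equivariant) strong residue theorem, applied to the polar set $\{0\}\cup\Gamma\zs\cup\{\8\}$, this enlarged tuple is the collection of Laurent expansions of some $f(t) \in \g\ox\C_{\8,\Gamma\zs,0}(t)$ if and only if
\beu -\res_{t^{-1}} t^2 \la f_\8,-\iota_{t^{-1}} g\ra + \sum_{i=1}^N\sum_{m=0}^{T-1} \res_{t-\omega^m z_i} \la f_{\omega^m z_i}, \iota_{t-\omega^m z_i} g\ra + \res_t \la f_0,\iota_t g\ra = 0 \eeu
for every (non-equivariant) test function $g \in \g\ox\C_{\8,\Gamma\zs,0}(t)$.

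To verify this non-equivariant condition, decompose $g = \sum_{k'\in\Z/T\Z} g_{k'}$ into $\Gamma$-isotypic pieces via the averaging $g_{k'}(t) := \tfrac{1}{T}\sum_{m=0}^{T-1} \omega^{-mk'}\sigma^{-m} g(\omega^m t) \in \g_\zsa^{\Gamma,k'}$. At any marked point $p \in \{0,\8\}$ the local scalar pairing $\la f_p,g_{k'}\ra$ has $\sigma$-weight $k+k'$, so its coefficient of $t^{-1}$ vanishes unless $k+k'\equiv -1 \pmod T$; at each finite orbit the $T$ residues combine by change of variable into $T\delta_{k+k'+1,\,0}$ times the residue at $z_i$. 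Hence only $g_{-k-1}$ contributes, and the displayed identity becomes exactly $T$ times the hypothesis applied to $g_{-k-1}$, which holds by assumption. The resulting $f(t) \in \g\ox\C_{\8,\Gamma\zs,0}(t)$ is automatically $\Gamma$-equivariant of weight $k$, because $f(\omega t) - \omega^k \sigma f(t)$ is a rational function whose Laurent expansion at every marked point vanishes by construction, and hence is zero.

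The main obstacle is the bookkeeping: tracking the $\op$ convention and the sign conventions in the residue at $\8$, and confirming that the isotypic-selection argument genuinely annihilates all components except $g_{-k-1}$ at both the fixed points $0,\8$ and the finite Galois orbits. Once this is organised, every step is a routine residue calculation.
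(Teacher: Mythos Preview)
Your proof is correct. The paper itself gives no details, referring instead to \cite[Lemma A.1]{VY1} with the remark that one must now include the pole at $\infty$; so the relevant comparison is with that argument.

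For necessity your argument is essentially the one in \cite{VY1}: the pairing $\langle f,g\rangle$ lies in $\C_{\8,\Gamma\zs,0}(t)^{\Gamma,-1}$, so $\langle f,g\rangle\,dt$ is $\Gamma$-invariant, the residues along each $\Gamma$-orbit coincide, and dividing the full residue sum by $T$ gives the stated identity.

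For sufficiency the two routes genuinely differ. The argument of \cite{VY1} is direct: one builds the candidate rational function $f$ by $\Gamma$-symmetrising the singular parts of the $f_{z_i}$ (and here also of $f_0$, $f_\8$), observes that the discrepancies $f_{z_i}-\iota_{t-z_i}f$ are then regular power series, and kills them term-by-term by testing against $g(t)=\sum_{\alpha\in\Gamma}\alpha^{-k-1}(\alpha.b)/(\alpha^{-1}t-z_i)^{n+1}$, invoking non-degeneracy of $\langle\cdot,\cdot\rangle$. Your approach instead extends the local data equivariantly to all of $\Gamma\zs$, reduces to the \emph{non-equivariant} strong residue theorem, and verifies that theorem's hypothesis by decomposing an arbitrary test $g$ into $\Gamma$-isotypic pieces $g_{k'}$; the orbit sums and the $t^{-1}$-selection at the fixed points force only $k'=-k-1$ to survive, which is exactly the assumed condition. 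Your final equivariance check (that $f(\omega t)-\omega^k\sigma f(t)$ has vanishing expansion at every marked point and is therefore zero) is clean and correct. The trade-off: your route is more conceptual and modular---it isolates the $\Gamma$-equivariance as a post-hoc verification---but it presupposes the non-equivariant strong residue theorem as a black box, whereas the \cite{VY1} argument is self-contained.
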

\begin{proof} The proof is as in \cite[Lemma A.1]{VY1}, but including the poles at $\8$. Compare  \eqref{zr}.\end{proof}

Let $\gh_{z_i}$ denote the extension of $\g\ox \C((t-z_i))$ by a one-dimensional centre $\C K_{z_i}$, defined by the cocycle
\be \Omega_{z_i}(f_{z_i}, g_{z_i}) := \res_{t-z_i} \la f_{z_i}, \del_{t-z_i} g_{z_i}\ra K_{z_i}, \qquad f_{z_i},g_{z_i} \in \g\ox \C((t-z_i)).\ee
Thus, each  $\gh_{z_i}$, $i=1,\dots, N$, is a copy of the (untwisted) affine Lie algebra $\gh$.

Let $\gh_0^\Gamma$ denote the extension of $(\g\ox \C((t)))^\Gamma$ by a one-dimensional centre $\C K_0$, defined by the cocycle 
\be \Omega_0(f_0,g_0) := \res_t \la f_0, \del_t g_0 \ra K_0,\qquad f_0,g_0 \in (\g\ox \C((t)))^\Gamma.\ee

Let $\gh_\8^{\op,\Gamma}$ denote the extension of $(\g^\op\ox \C((t^{-1})))^\Gamma$ by a one-dimensional centre $\C K_\8$, defined by the cocycle 
\begin{align} 
\Omega_\8(f_\8,g_\8) &:= \res_{t^{-1}} \la f_\8, \del_{t^{-1}} g_\8 \ra K_\8\nn\\ 
&= - \res_{t^{-1}} t^2 \la f_\8, \del_ tg_\8 \ra K_\8,\qquad f_\8,g_\8 \in (\g^\op\ox \C((t^{-1})))^\Gamma.
\end{align}

Given any $X \in \g$, $n \in \Z$ we introduce the notations
\begin{equation*}
X[n]_{z_i} := X \otimes (t-z_i)^n \in \gh_{z_i}, \qquad
X[n]_0 := X \otimes t^n \in \gh_0, \qquad
X[n]_\8 := X \otimes t^n \in \gh_\8.
\end{equation*}
Note in particular our conventions for the $n^{\rm th}$-modes at $\8$.

The algebras $\gh_0^\Gamma$ and $\gh_\8^\Gamma$ are both copies of an algebra $\gh^\Gamma$ which is either a twisted affine Lie algebra (if $\sigma$ is an outer automorphism) or else isomorphic to $\gh$ (if $\sigma$ is an inner automorphism).  

Let $\gN$ denote the extension of 
$(\g^\op \ox \C((t^{-1})))^{\Gamma} \oplus \bigoplus_{i=1}^N \g\ox\C((t-z_i)) \oplus (\g \ox \C((t)))^{\Gamma}$, by a one-dimensional centre $\C K$, defined by the cocycle 
\be\label{cocycle}
\Omega(f, g) :=  \left(- \frac 1 T \res_{t^{-1}} t^2 \la f_\8, \del_tg_\8 \ra 
+ \sum_{i = 1}^N \res_{t-z_i} \la f_{z_i},  \del_tg_{z_i} \ra 
 + \frac 1T \res_{t}  \la f_0,\del_tg_0\ra  \right)K,
\ee
where $f = (f_\8;f_{z_1},\dots,f_{z_N};f_0)$ and $g = (g_\8;g_{z_1},\dots,g_{z_N}; g_0)$ are in $(\g^\op \ox \C((t^{-1})))^{\Gamma}\oplus \bigoplus_{i=1}^N \g\ox\C((t-z_i)) \oplus (\g \ox \C((t)))^{\Gamma}$. 
In other words, $\gN$ is the quotient of the direct sum $\gh_\8^{\op,\Gamma} \oplus \bigoplus_{i=1}^N \gh_{z_i}\oplus \gh_0^\Gamma$ by the ideal spanned by $K_{z_i} - TK_0$, $i= 1, \ldots, N$, and $K_\8- K_0$, leaving one central generator, say $K_{z_1}$, which we call $K$. 

We have an embedding of Lie algebras
\be (-\iota_{t^{-1}}; \iota_{t-z_1},\dots,\iota_{t-z_N}; \iota_t ) : \gG \longhookrightarrow  
(\g^\op \ox \C((t^{-1})))^{\Gamma} \oplus \bigoplus_{i=1}^N \g\ox\C((t-z_i)) \oplus (\g \ox \C((t)))^{\Gamma}.\nn\ee
By Lemma \ref{lem:gsrt} the restriction of the cocycle $\Omega$ to the image of $\gG$ under this embedding vanishes. Therefore the embedding lifts to an embedding
\be \gG \longhookrightarrow \gN.\label{iotamap}\ee

\subsection{Induced $\gh_{\8, N, 0}$-modules}\label{sec: indg}
Let $\M_{z_i}$ be a module over $\g\ox \C[[t-z_i]]$, for each $i=1,\dots, N$. 
We then make it into a module over $\g\ox \C[[t-z_i]] \oplus \C K_{z_i}$ by declaring that $K_{z_i}$ acts by multiplication by $k\in \C$.  
Then we have the induced left $U(\gh_{z_i})$-module,
\be
\MM_{z_i}^k 
:=  U(\gh_{z_i})\ox_{U(\g\ox\C[[t-z_i]] \oplus \C K_{z_i})} \M_{z_i}.
\ee

Let $\M_0$ be a module over $(\g \ox \C[[t]])^\Gamma$. 
We make it into a module over $(\g\ox\C[[t]])^\Gamma \oplus \C K_0$ by declaring that  $K_0$ acts by multiplication by $k/T\in \C$. 
Then we have the induced left $U(\gh^\Gamma_{0})$-module,
\be
\MM_{0}^{k/T} 
:=  U(\gh^\Gamma_{0})\ox_{U((\g\ox\C[[t]])^\Gamma \oplus \C K_0)} \M_{0}.
\ee

Let $\M_\8$ be a module over $(\g^\op\ox t^{-1}\C[[t^{-1}]])^{\Gamma}$. We make it into a module  
over $(\g^\op\ox t^{-1}\C[[t^{-1}]])^{\Gamma} \oplus \C K_\8$ by declaring that $K_\8$ acts by multiplication by $k/T\in \C$. We have the induced left $U(\gh^{\op,\Gamma}_{\8})$-module,
\begin{align}
\MM_{\8}^{k/T} &:= 
U(\gh^{\op,\Gamma}_{\8}) \ox_{U((\g^\op\ox t^{-1}\C[[t^{-1}]])^\Gamma \oplus \C K_\8)} \M_{\8}
\end{align}
Let us write
\be \Mx := \M_\8 \ox \bigotimes_{i=1}^N \M_{z_i} \ox \M_0 \label{Mxdef}\ee
and 
\be \gNpp:=(\g^\op \ox t^{-1}\C[[t^{-1}]])^{\Gamma} \oplus \bigoplus_{i=1}^N \g\ox\C[[t-z_i]] \oplus (\g \ox \C[[t]])^{\Gamma} \oplus \C K.\label{gNpdef}\ee 
Then the tensor product
\be \MMx := \MM^{k/T}_\8 \ox \bigotimes_{i=1}^N \MM^k_i \ox \MM^{k/T}_0 =  U(\gN) \ox_{U(\gNpp\oplus \C K)} \Mx\label{MMxdef}\ee
is a module over $\gN$ on which $K$ acts as $k$. Pulling back by the embedding \eqref{iotamap}, we have that $\MMx$ becomes a module over $\gG$ and we can form the space of \emph{coinvariants},
\be \MMx \big/ \gG := \MMx \big/ (\gG\on \MMx) .\nn\ee
\begin{prop}\label{prop:isom1} The Lie algebras $\gNpp$ and $\gG$ embed as a pair of complementary Lie subalgebras in $\gN$, \ie
\be \gN = \gNpp \dotplus \gG\nn\ee
as vector spaces. Therefore there is a canonical isomorphism of vector spaces
\be \MMx \big/ \gG \cong_\C \Mx.\ee
\end{prop}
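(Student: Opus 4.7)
The plan is to establish the vector-space decomposition $\gN = \gNpp \dotplus \gG$ by proving transversality and spanning separately, and then derive the coinvariants isomorphism from PBW. For transversality, observe that the embedding \eqref{iotamap} has zero central component, so an element of $\gG\cap\gNpp$ corresponds to a rational function $g\in\gG$ whose Laurent expansion at $\infty$ lies in $t^{-1}\C[[t^{-1}]]$, at each $z_i$ lies in $\C[[t-z_i]]$, and at $0$ lies in $\C[[t]]$. Hence $g$ is holomorphic on all of $\CP^1$ and vanishes at $\infty$, forcing $g=0$.

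For spanning, given $(f_\8; f_{z_1},\ldots, f_{z_N}; f_0; cK)\in \gN$, the central part $cK$ already lies in $\gNpp$, so it suffices to find $g\in \gG$ whose polar parts at $\infty, z_1,\ldots, z_N, 0$ match those of the $f$'s; the difference will then lie in $\gNpp$. By linearity I treat each pole type separately. For a polar term $X/(t-z_i)^{n+1}$ at $z_i$, with $X\in\g$ and $n\ge 0$, the equivariance condition $g(\omega t)=\sigma g(t)$ forces the ansatz
$$ g(t) \;=\; \sum_{a=0}^{T-1} \frac{\omega^{a(n+1)}\,\sigma^{a} X}{(t-\omega^a z_i)^{n+1}}, $$
which lies in $\gG$ (since the orbit $\Gamma z_i$ consists of $T$ distinct points away from $0,\infty$), has polar part $X/(t-z_i)^{n+1}$ at $z_i$, and vanishes at infinity. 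A polar term $X t^{-n-1}$ at $0$ with $X\in\Pi_{-(n+1)}\g$ is handled by the monomial $g(t)=X t^{-n-1}$, manifestly equivariant. A polar-at-$\infty$ contribution, which is an equivariant polynomial $P(t)=\sum_{n\ge 0}c_n t^n$ with $c_n\in \Pi_n\g$, is handled by $g(t)=-P(t)$, the sign accounting for the $-\iota_{t^{-1}}$ in \eqref{iotamap}. Summing these building blocks over all polar data produces the desired $g\in\gG$.

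Given $\gN=\gNpp\dotplus\gG$, the Poincar\'e--Birkhoff--Witt theorem yields a vector-space identification $U(\gN)\cong U(\gG)\otimes_\C U(\gNpp)$ (using $\C K\subset\gNpp$). Substituting into \eqref{MMxdef} gives $\MMx\cong U(\gG)\otimes_\C \Mx$ as a left $\gG$-module via left multiplication on the first factor. Since $U(\gG)/\gG\cdot U(\gG)\cong \C$, passing to $\gG$-coinvariants yields $\MMx/\gG \cong_\C \Mx$.

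The main technical point is the $\Gamma$-averaged partial-fraction construction in the spanning step: one must check that the coefficients $\omega^{a(n+1)}$ are uniquely forced by equivariance and that the resulting function is simultaneously regular at all marked points other than $z_i$. The rest of the argument parallels the non-cyclotomic template of \cite[Appendix A]{VY1} and \cite{FFR}, extended to include the fixed points $0$ and $\infty$ of the $\Gamma$-action.
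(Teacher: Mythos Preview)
Your proof is correct and follows essentially the same approach as the paper's cited reference \cite{VY1}: the partial-fraction/$\Gamma$-averaging construction you give for the spanning step is exactly Lemma~2.1 there (extended to include the fixed points $0$ and $\infty$), and your PBW argument for the coinvariants is Corollary~2.4. The paper itself defers entirely to that citation, so you have reconstructed the intended proof.
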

\begin{proof} As in \cite{VY1}, Lemma 2.1 and Corollary 2.4. \end{proof}

\subsection{Vacuum verma module $\VV_0^k$}
Now let $u\in \Cx$ be an additional non-zero marked point, whose orbit $\Gamma u$ is disjoint from $\Gamma \zs$. 
Then we have the algebras $\gGu$, $\gNu$, etc. defined as above but with the point $u$ included. To the point $u$ we assign a copy of the vacuum Verma module $\VV_0^k$ over the local copy $\gh_{u}$ of the affine Lie algebra $\gh$. Recall that by definition $\VV_0^k$ is the induced module
\be \VV_0^k  := U(\gh_{u})\ox_{U(\g\ox\C[[t-u]] \oplus \C K_{u})} \C \vac.\ee
Here $\C\vac$ denotes the one-dimensional module over  $\g \ox \C[[t-u]]\oplus \C K_{u}$ on which $\g\ox \C[[t-u]]$ acts trivially and $K_{u}$ acts by multiplication by $k\in \C$.  

A vector $X\in \VV_0^k$ is \emph{singular} if $A\on X=0$ for all $A\in \g \ox \C[t]$.
The singular vectors form a linear subspace of $\VV_0^k$ denoted $\ssc(\gh)$. 

\begin{prop}\label{prop:isom2} There is a canonical isomorphism of vector spaces
\[
\pushQED{\qed}
(\MMx\ox \VV_0^k) \big/ \gGu \cong_\C \Mx \ox \C\vac \cong_\C \Mx.
\qedhere
\popQED
\]
\end{prop}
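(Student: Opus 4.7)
The strategy is a direct extension of Proposition \ref{prop:isom1}. First I would enlarge the ``regular-at-all-marked-points'' subalgebra of $\gNu$ to accommodate the extra point $u$, establish the resulting complementary-subalgebras decomposition, and then read off the coinvariants from the induced-module structure of $\MMx \ox \VV_0^k$.

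In the first step, I set
\[
\widehat{\g}_{\8,N+1,0}^{+\!\!+} \,:=\, (\g^\op \ox t^{-1}\C[[t^{-1}]])^{\Gamma} \oplus \bigoplus_{i=1}^N \g\ox\C[[t-z_i]] \oplus \g\ox\C[[t-u]] \oplus (\g\ox\C[[t]])^{\Gamma} \oplus \C K,
\]
and claim
\[
\gNu \,=\, \widehat{\g}_{\8,N+1,0}^{+\!\!+} \dotplus \gGu
\]
as vector spaces. This is proved exactly as in \cite{VY1}, Lemma 2.1 and Corollary 2.4, using the $\Gamma$-equivariant strong residue theorem (Lemma \ref{lem:gsrt}) extended to allow poles along the extra orbit $\Gamma u$. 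Since $u$ is not a fixed point of $\Gamma$, the equivariance condition determines the Laurent expansion at each $\omega^j u$ from the expansion at $u$, so $\Gamma u$ contributes a single copy of $\g\ox\C((t-u))$ via $\iota_{t-u}$ and plays the same role as any of the $z_i$.

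In the second step I observe that, from the definitions of $\MMx$ (see \eqref{MMxdef}) and of $\VV_0^k$, together with the identifications of the central generators inside $\gNu$, transitivity of induction gives the $\gNu$-module isomorphism
\[
\MMx \ox \VV_0^k \,\cong\, U(\gNu) \ox_{U(\widehat{\g}_{\8,N+1,0}^{+\!\!+})} (\Mx \ox \C\vac),
\]
where $\widehat{\g}_{\8,N+1,0}^{+\!\!+}$ acts component-wise on $\Mx \ox \C\vac$, with $\g\ox\C[[t-u]]$ annihilating $\vac$ and $K$ acting as $k$. The PBW theorem applied to the decomposition of the first step yields a vector-space isomorphism $U(\gNu) \cong U(\gGu) \ox_\C U(\widehat{\g}_{\8,N+1,0}^{+\!\!+})$, so substituting produces an isomorphism of left $\gGu$-modules
\[
\MMx \ox \VV_0^k \,\cong\, U(\gGu) \ox_\C (\Mx \ox \C\vac).
\]
Quotienting by the left action of $\gGu$ on the first factor then gives $(\MMx \ox \VV_0^k)/\gGu \cong_\C \Mx \ox \C\vac \cong_\C \Mx$, as claimed.

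The only non-trivial step is the complementary decomposition $\gNu = \widehat{\g}_{\8,N+1,0}^{+\!\!+} \dotplus \gGu$, but as $u$ is a regular (non-fixed) point of $\Gamma$ it is handled exactly like the $z_i$, so no essentially new difficulty arises beyond what was already treated in \cite{VY1}.
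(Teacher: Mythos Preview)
Your proposal is correct and follows exactly the approach the paper intends: the paper gives no separate proof here because the statement is Proposition \ref{prop:isom1} applied verbatim with the extra marked point $u$ adjoined, and your write-up spells out precisely that extension (complementary decomposition, induced-module identification, PBW, then quotient by $\gGu$).
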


It follows that, given any $X\in \VV_0^k$, there is a linear map $X(u):\Mx \to \Mx$ defined by
\be \Mx \longinto \MMx \xrightarrow{\cdot\ox X} \MMx \ox \VV_0^k \longonto (\MMx\ox \VV_0^k)\big/ \gGu \overset{\sim}\longrightarrow \Mx \label{Xumap}\ee
where $\Mx \into \MMx$ is the natural embedding. The map $X(u)$ depends rationally on $u$, with poles at most at the points $0$, $\omega^kz_i$ ($1\leq i\leq N$ and $k\in \ZT$) and $\8$. 

\subsection{Generalized Takiff algebras}
For any $n \in \Z_{\geq 1}$  there is an ideal $ \g \ox t^n \C[t] \subset  \g \ox \C[t]$. Define the Lie algebra $\tak n \g$ to be the quotient
\begin{align} \tak n\g &:= \left(\g\ox \C[t]\right) \big/ \left( \g \ox t^n \C[t] \right) \nn\\
&\cong_\C \g \oplus t\g \dots \oplus t^{n-1}\g.\nn\end{align}
Thus $\tak 1\g = \g$. The Lie algebra $\tak n\g$ is known as a \emph{(generalized) Takiff algebra}.   

The Lie algebras $\tak n \g$ together with the canonical projections $\tak n \g\onto \tak m\g$, $n>m$, form an inverse system, and $\g \ox \C[[t]]$ is the inverse limit $\varprojlim \tak n \g$. 

Define also the \emph{twisted Takiff algebra} $\tak n\g^\Gamma$:
\begin{align} \tak n\g^\Gamma &:= \left(\g\ox \C[t]\right)^\Gamma \big/ \left( \g \ox t^n \C[t] \right)^\Gamma \nn\\
&\cong_\C \g^\sigma \oplus t\Pi_1\g \dots \oplus t^{n-1}\Pi_{n-1}\g.\nn\end{align}
In particular $\tak 1\g^\Gamma = \g^\sigma$. 

We use the notation $X_{\ul p}$ for the class 1of the element $t^pX = X \ox t^p$ in $\tak n\g$.

For any $z\in \C$ we have the naive isomorphism $\tak n\g \cong \left(\g\ox \C[[t-z]]\right) \big/ \left( \g \ox (t-z)^n \C[[t-z]] \right)$ which  sends $X_{\ul p}$ to the class of $X\ox (t-z)^p$. 
By means of this isomorphism, modules over $\tak n \g$ pull back to modules over $\g\ox \C[[t-z]]$. 
 
\subsection{Universal Cyclotomic Gaudin Algebra}
Given any $n_0,n_{z_1},\dots,n_{z_N},n_\8\in \Z_{\geq 1}$ we write $\ns = \{ n_{z_1}, \ldots, n_{z_N} \}$. Let $\I_{\nsa}\subset \Upp$ denote the two-sided ideal in $\Upp$ generated by $\left( \g^\op \ox t^{-n_\8} \C[[t^{-1}]] \right)^\Gamma$, $\g \ox (t-z_i)^{n_{z_i}} \C[[t-z_i]]$, $i=1,\dots,N$, and $\left( \g \ox t^{n_0} \C[[t]] \right)^\Gamma$. 
Define 
\be \Upp_{\nsa} := \Upp/\I_{\nsa}.\label{Udef}\ee 
These 
form an inverse system whose inverse limit is $\Upp$.

Let us now take the module $\Mx$ in \eqref{Mxdef} to be a copy of $\Upp_\nsa$, regarded as a left module over itself. 
For any $X\in \VV_0^k$ we have a map $X(u) : \Upp_\nsa \to \Upp_\nsa$ as in \eqref{Xumap}. By construction this can be written in terms of the left action of $\Upp_\nsa$, which commutes with the right action of $\Upp_\nsa$. So $X(u)$ commutes with the right action of $\Upp_\nsa$. Hence for all $a\in \Upp_\nsa$, $X(u)\on a = X(u) \on (1a) = (X(u) \on 1) a$.  That is, $X(u)$ acts by left-multiplication by the element $X(u)\on 1\in \Upp_\nsa$. Since the latter depends on the choice of $\nsa$, we will denote it by $X(u)_\nsa$. When the choice of $\nsa$ is clear from the context we will write $X(u)_\nsa$ simply as $X(u)$. By construction, whenever $n_0'>n_0$, $n'_{z_i}> n_{z_i}$ and $n_\8'> n_\8$ then
\be X(u)_\nsa = X(u)_{n_0',\ns',n_\8'} + \I_{\nsa}. \label{invsys}\ee
In other words, the elements $X(u)_\nsa \in \Upp_\nsa$ are compatible with the above inverse system and hence define an element of the inverse limit $\Upp$. By a slight abuse of notation we will also call this element simply $X(u) \in \Upp$.

We have the natural inclusion $\g^\sigma \into \g$ and hence the ``diagonal'' embedding 
\begin{align}
\g^\sigma \longinto \g^{\sigma,\op} \oplus \bigoplus_{i=1}^N \g \oplus \g^\sigma&\longinto 
\gN,\nn\\  X\longmapsto (-X; X,\dots,X;X) &\longmapsto (-X[0]_\8; X[0]_{z_1},\dots, X[0]_{z_N};X[0]_0) .\label{ed}\end{align}
Identifying $\g^\sigma$ with its image under this embedding, this gives an action of $\g^\sigma$ on $U(\gN)$ by left- and right-multiplication. In particular, we can define the adjoint action of $\g^\sigma$ on $U(\gN)$. Note that the adjoint action stabilises $\Upp$ but the actions by left- and right-multiplication do not, because the zero-modes at $\8$ are not present in $\Upp$.
Let us write 
\be \Upp^{\gs} := \{ x\in \Upp: [a,x] =0\text{ for all } a\in \gs \}\ee
for the invariant subspace of the adjoint action of $\gs$ on $\Upp$. Define $\Upp_\nsa^\gs$ likewise.

Now suppose $X$ is a singular vector, $X\in \ssc(\gh) \subset \VV_0^k$.  Then $X$ is in particular $\g^\sigma$-invariant, $a\on X = 0$ for all $a\in \gs$. Hence we have
\be 0= [ 1 \ox a \on X ]  = -[ a\on 1 \ox X] = [X(u) a \on 1 \ox \vac ] = [X(u) a\on 1] \nn\ee
where we ``swapped using the constant rational function $a$'', \ie used $[a.(1\ox X)]=0$, in the second equality, used the definition of $X(u)$ in the third, and finally used  the isomorphism $(\MMx\ox \VV_0^k) \big/ \gGu \cong \MMx \big/ \gG$. On the other hand, in the space of coinvariants $\MMx \big/ \gG$ we have
\be 0= [a.(X(u)\on 1)] = [ aX(u) \on 1].\nn\ee
Taking the difference of the two equalities above, we get
\be 0= \big[ [a,X(u)] \on 1 \big] =  [a,X(u)] \nn\ee
where in the last equality we can use the identification $\MMx \big/ \gG \cong \Mx$. (The point is that neither $aX(u)$ nor $X(u)a$ need belong to $\Mx = \Upp_\nsa$, but the commutator $[a,X(u)]$ does, as we noted above.) 
This shows that if $X$ is singular then $X(u) \in \Upp_\nsa^\gs$.
 
For each $X\in \VV_0^k$, the element $X(u)$ depends rationally on $u$ with poles at most at $0, \omega^k z_i, \8$, $i=1,\dots,N$. 
Define the algebra $\Gaud_\zsa^\nsa(\g,\sigma)^\Gamma$ to be the span, in $\Upp_\nsa^\gs$, of all the coefficients of singular terms of Laurent expansions of the operators $Z(u)$ as $Z$ varies in the space of singular vectors $\ssc(\gh)\subset \Vcrit$. 

By virtue of \eqref{invsys}, the algebras $\Gaud_\zsa^\nsa(\g,\sigma)^\Gamma$ form an inverse system. Define the \emph{universal cyclotomic Gaudin algebra} $\Gaud_\zsa(\g,\sigma)^\Gamma$ to be the inverse limit,
\be \Gaud_\zsa(\g,\sigma)^\Gamma := \varprojlim \Gaud_\zsa^\nsa(\g,\sigma)^\Gamma.\nn\ee

By the argument  in \cite{VY1}, following \cite{FFR}, we have
\begin{thm}\label{Zthm}
Each $\Gaud_\zsa^\nsa(\g,\sigma)^\Gamma$ is a commutative subalgebra of $\Upp_\nsa^\gs$.\\
Hence $\Gaud_\zsa(\g,\sigma)^\Gamma$ is a commutative subalgebra of $\Upp^\gs$. \qed
\end{thm}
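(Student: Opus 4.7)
The plan is to adapt the strategy used in \cite[Theorem 2.7]{VY1}, itself following \cite{FFR}, to the present setting where two irregular singularities (at $0$ and $\8$) are included. The statement for $\Gaud_\zsa(\g,\sigma)^\Gamma$ then follows from the one for the quotients $\Gaud_\zsa^\nsa(\g,\sigma)^\Gamma$ by passing to the inverse limit, thanks to the compatibility relation \eqref{invsys}.

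First I would reduce commutativity to the case $\Mx = \Upp_\nsa$. Since each generator $X(u)\in \Gaud_\zsa^\nsa(\g,\sigma)^\Gamma$ acts by left multiplication on $\Upp_\nsa$, it is enough to show that for any two singular vectors $X,Y \in \ssc(\gh)\subset \Vcrit$ the elements $X(u)\on 1$ and $Y(v)\on 1$ of $\Upp_\nsa$ commute as elements of the algebra. Equivalently, one must show that the operators $X(u)$ and $Y(v)$ on $\Mx$ commute for every choice of $\Mx$.

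The key step is to introduce \emph{two} auxiliary marked points $u,v\in\Cx$, chosen so that $\Gamma u$, $\Gamma v$ and $\Gamma \zs$ are pairwise disjoint. To both points one assigns a copy of the vacuum Verma module $\VV_0^k$ and one forms the enlarged coinvariant space
\beu
\bigl(\MMx \ox \VV_0^k \ox \VV_0^k\bigr) \big/ \g^\Gamma_{\8,\zs,u,v,0}.
\eeu
The analogue of Proposition \ref{prop:isom2} (proved exactly as before, using the $\Gamma$-equivariant strong residue theorem) gives a canonical isomorphism of this coinvariant space with $\Mx$. Inserting $X$ at $u$ and $Y$ at $v$ yields a linear endomorphism of $\Mx$, rational in $u$ and $v$. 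Computing this endomorphism by first contracting the factor at $v$ and then at $u$ produces the composite $X(u)\circ Y(v)$; reversing the order produces $Y(v)\circ X(u)$. Hence $X(u)$ and $Y(v)$ agree as rational functions on the complement of the union of the diagonals and the fixed marked points, and in particular commute.

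The main technical point, and the only place where the cyclotomic setup enters, is verifying that the auxiliary points $u,v$ can be safely introduced and manipulated: namely, that the analogue of Proposition \ref{prop:isom1} holds when the collection of marked points is enlarged, so that the two orderings indeed both compute the same element of coinvariants. This uses Lemma \ref{lem:gsrt} in exactly the same way as in \cite[Lemma 2.1, Corollary 2.4]{VY1}; the presence of the twisted loop algebras $(\g\ox\C((t)))^\Gamma$ at $0$ and $(\g^\op\ox\C((t^{-1})))^\Gamma$ at $\8$ poses no new difficulty since the auxiliary insertions sit at generic (untwisted) points. Commutativity at the level of coinvariants then follows from the commutative vertex-algebra structure on the subspace $\ssc(\gh)\subset \Vcrit$ (the Feigin--Frenkel centre at the critical level), exactly as in \cite{FFR}: expanding $X(u)Y(v)$ in the region $|u-v|$ small in terms of the state-field correspondence on $\Vcrit$, one picks up only singular vectors, for which the vertex algebra product is symmetric.
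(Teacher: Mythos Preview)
Your approach is the right one and matches what the paper invokes: the two-point insertion argument from \cite[Proposition 2.6/Theorem 2.7]{VY1}, following \cite{FFR}. The paper gives no new proof here and simply cites those references, so your sketch is in line with the intended argument.

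That said, one step in your write-up is stated too quickly. You assert that ``computing this endomorphism by first contracting the factor at $v$ and then at $u$ produces the composite $X(u)\circ Y(v)$'', but this is precisely where the singularity hypothesis does its work, and you do not say so. When one swaps $Y$ away from $v$, the resulting rational function also has poles at the points of $\Gamma u$, and the corresponding Laurent coefficients act on the copy of $\Vcrit$ at $u$ via non-negative modes $A[n]$, $n\geq 0$. These contributions vanish \emph{because} $X\in\ssc(\gh)$ is singular; without this, the two orderings would not simply give $X(u)Y(v)$ and $Y(v)X(u)$. Your final paragraph tries to supply the missing justification via the commutative vertex-algebra structure of $\ssc(\gh)$, but that is really a different (OPE-based) route to the same conclusion, and mixing the two obscures the logic. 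The cleanest version of the argument, and the one actually used in \cite{VY1}, is: singularity of $X$ kills the $v\to u$ cross-terms, singularity of $Y$ kills the $u\to v$ cross-terms, hence both orderings compute the same element of coinvariants.
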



\subsection{Quadratic cyclotomic Hamiltonians}
Let $I_a\in \g$ and $I^a\in \g$, $a=1,\dots,\dim\g$, be dual bases of $\g$ with respect to $\la\cdot,\cdot\ra$, \ie $\la I_a,I^b\ra = \delta_a^b$. 
Let $\mc C:=\half I^{a}I_a \in Z(U(\g))$, the quadratic Casimir of $\g$. Here and below we employ summation convention on the index $a=1,\dots, \dim\g$. 
Define an element $F\in \g^\sigma$ and number $K \in \C$ by
\be F := \half \sum_{p=1}^{T-1} \frac{\omega^p [ \sigma^p I^a, I_a] }{\omega^p - 1}, \qquad 
K := \half \sum_{p=1}^{T-1} \frac{\omega^p \la \sigma^p I^a,I_a\ra k}{(\omega^p - 1)^2}.\label{FKdef}\ee

The \emph{quadratic Segal-Sugawara} vector $S$ is by definition
\be S := \half I^a[-1] I_a[-1] \vac \in \VV_0^k \label{Sdef}\ee
At the critical level $k=-h^\vee$, the vector $S$ is singular.  
\begin{prop}\label{qHprop}
The corresponding element $S(u) \in \Upp^\gs$ is given by
\begin{align} S(u) - \frac1 {u^2}K 
&= \sum_{i=1}^N \sum_{k=0}^{T-1} \sum_{p=0}^\8 \frac{\omega^{-kp+k} \mc H_{i,p}}{(u-\omega^{-k}z_i)^{p+1}} 
+ \sum_{\substack{p=0\\ p \equiv 1 \, \textup{mod}\, T}}^\8 \frac{\mc H_{0,p}}{u^{p+1}} 
+ \sum_{\substack{p=0\\ p \equiv -2 \, \textup{mod}\, T}}^\8 u^p \mc H_{\8,p} \end{align}
where 
\begin{multline} 
\mc H_{i,p} =  \sum_{\substack{j=1\\j\neq i}}^N \sum_{l=0}^{T-1} \sum_{n,m = 0}^\8  \frac{(-1)^n \binom{n+m} m}{(z_i - \omega^{-l} z_j)^{n+m+1}}  I_a[n+p]_{z_i} \omega^{-lm}  (\sigma^{l} I^a)[m]_{z_j} \\
+\sum_{l=1}^{T-1} \sum_{r,m = 0}^\8  \frac{\omega^{-lm} (-1)^r \binom{r+m} m}{((1  - \omega^{-l}) z_i)^{r+m+1}} 
\half \left\{I_a[r+p]_{z_i}, (\sigma^{l} I^a)[m]_{z_i}\right\} + \sum_{n=0}^{p-1} \half I_a[n]_{z_i} I^a[p-n-1]_{z_i}\\
+ T \sum_{n,m=0}^\8   \frac {(-1)^{n} \binom {n+m} m}{z_{i}^{n+m+1}} I_a[n+p]_{z_i} (\Pi_m I^a)[m]_0 + \sum_{n=0}^\8   \frac{(-1)^{n}}{z_i^{n+1}}  F[n+p]_{z_i}\\
+ T \sum_{n,m=0}^\8  z_i^{n} \binom {n+m} m  (\Pi_{-n-m-1} I_a)[-n-m-1]_\8  I^a[p+m]_{z_i},
\end{multline} 
for $i = 1, \ldots, N$, and
\begin{multline}
\mc H_{0,p} = T^2 \sum_{m=0}^\8\sum_{n=0}^\8 \sum_{i=1}^N \frac{(-1)^{n+p+1} \binom {n+m} m }{z_{i}^{n+m+1}} (\Pi_{p-m-1}I_a)[n+p]_{z_i} (\Pi_m I^a)[m]_0\\
+ T^2 \sum_{n=0}^p (\Pi_n I_a)[n]_0 (\Pi_{-p+n-1} I^a)[-p+n-1]_\8  \\
+\frac{T^2}{2} \sum_{n=0}^{p-1} (\Pi_n I_a)[n]_0 (\Pi_{p-n-1} I^a)[p-n-1]_0 + T (\Pi_{p-1} F)[p-1]_0,
\end{multline}
and where finally
\begin{multline}
\mc H_{\8,p} = T^2 \sum_{i=1}^N \sum_{m,n=0}^\8 z_i^{m-n-1-p} \binom {m-1-p} n (\Pi_{-m-1} I_a)[-m-1]_\8 (\Pi_{m-p-1} I^a)[n]_{z_i} \\
+ T^2 \sum_{n=0}^\8 (\Pi_n I_a)[n]_0 (\Pi_{-p-n-2} I^a)[-p-n-2]_\8 \\
+ \frac{T^2}{2} \sum_{n=0}^p (\Pi_{-n-1} I_a)[-n-1]_\8 (\Pi_{-p+n-1} I^a)[-p+n-1]_\8 + T (\Pi_{-p-2} F)[-p-2]_\8.
\end{multline}
\end{prop}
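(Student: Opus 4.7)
My plan is to compute the element $S(u) \cdot 1 \in \Upp$ directly from the coinvariants construction \eqref{Xumap}, generalizing the argument of \cite[\S2]{VY1} and \cite{FFR}. The basic move is to pick, for each $I_a \in \g$, a $\Gamma$-equivariant rational function $f_a(t) \in \gGu$ whose principal part at $t = u$ is $I_a(t-u)^{-1}$; by equivariance, $f_a$ also has simple poles at $\omega^j u$ with residue $\omega^j \sigma^j I_a$ for $j = 1,\dots,T-1$, and at each external marked point $z_i$, $0$, $\8$ its Laurent expansion can be written explicitly as a geometric series. Since such $f_a$ acts as zero on coinvariants (by Proposition \ref{prop:isom2}), the action of $I_a[-1]_u$ on $\vac$ can be replaced modulo coinvariants by a sum of actions at the other poles, with coefficients read off from those expansions. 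Applying this exchange to both factors of $S = \frac{1}{2} I^a[-1]_u I_a[-1]_u \vac$ then expresses $S(u) \cdot 1$ as a quadratic expression in modes at $z_1,\dots,z_N$, $0$ and $\8$.

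The resulting terms organize according to which two points the modes land on. When they land on distinct external points one obtains the bilinear cross-terms $I_a[\cdot]_{z_i} (\sigma^l I^a)[\cdot]_{z_j}$, $I_a[\cdot]_{z_i} (\Pi_m I^a)[m]_0$, and $(\Pi_{-n-m-1} I_a)[-n-m-1]_\8 I^a[\cdot]_{z_i}$ displayed in the three Hamiltonians. When both modes land on the same external point, a regularization via the $\Gamma$-equivariant strong residue theorem (Lemma \ref{lem:gsrt}) produces the symmetrized anticommutator $\frac{1}{2}\{I_a[\cdot], (\sigma^l I^a)[\cdot]\}$ from the $l \neq 0$ orbit contributions, together with the diagonal $l = 0$ correction involving $F \in \g^\sigma$ of \eqref{FKdef}; this is the cyclotomic analogue of the dual-Coxeter shift at critical level and accounts for all the $F$-type terms. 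Finally, the constant anomaly $K/u^2$ comes from the central cocycle: since $f_a$ has poles at $\omega^j u$ for $j = 1,\dots,T-1$ with residue $\omega^j \sigma^j I_a$, exchanging $I^a[-1]_u$ with $I_a[-1]_u$ picks up a central contribution of the form $\tfrac{1}{2} k \omega^j \langle \sigma^j I^a, I_a\rangle / (u - \omega^j u)^2 = \tfrac{1}{2} k \omega^j \langle \sigma^j I^a, I_a\rangle / ((\omega^j - 1)^2 u^2)$ from each double pole, which sum over $j$ to precisely $K/u^2$ in the notation of \eqref{FKdef}.

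To extract the stated explicit formulas for $\mc H_{i,p}$, $\mc H_{0,p}$, $\mc H_{\8,p}$, one Laurent-expands $S(u)$ around the relevant point and reads off coefficients, using standard binomial expansions such as $(u - \omega^{-l} z_j)^{-p-1} = \sum_n (-1)^n \binom{n+p}{p} (\omega^{-l} z_j)^{-n-p-1} u^n + \dots$. The modular restrictions $p \equiv 1 \, \textup{mod}\, T$ at $0$ and $p \equiv -2 \, \textup{mod}\, T$ at $\8$ are forced by $\Gamma$-equivariance in $u$; the shift by $-2$ at $\8$ arises from the $t^2$ factor in $\res_{t^{-1}} t^2$ appearing in $\Omega_\8$ and \eqref{cocycle}.

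The main obstacle is expected to be combinatorial bookkeeping: tracking signs from the opposite-algebra convention at $\8$, the cocycle signs in \eqref{cocycle}, the $\omega$-powers produced by the $\Gamma$-orbit sums and by the projectors $\Pi_k$, and the different index ranges for modes at $0$ and $\8$ imposed by $\Gamma$-invariance. Since \cite{VY1} did not include a pole at $\8$, particular care will be needed to verify that these conventions combine to produce the signs, binomial coefficients $\binom{m-1-p}{n}$, and powers of $z_i$ displayed in $\mc H_{\8,p}$.
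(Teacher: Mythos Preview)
Your overall strategy is exactly that of the paper: swap both factors of $S$ using the equivariant function $f_A(t)=\sum_k \sigma^k A/(\omega^{-k}t-u)$, arrive at the intermediate form
\[
S(u)=\tfrac12 I_a(u)\,I^a(u)+\tfrac1u\,F(u)+\tfrac1{u^2}K,
\]
where $A(u)$ is the explicit sum of poles at $z_i$, $0$, $\infty$, and then partial-fraction decompose term by term to read off the $\mc H_{i,p}$, $\mc H_{0,p}$, $\mc H_{\infty,p}$.

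There is one point where your description would lead you astray if followed literally. The $F$ term does \emph{not} arise as a ``diagonal $l=0$ correction'' from both modes landing at the same external point---that case yields only the $\sum_{n=0}^{p-1}\tfrac12 I_a[n]_{z_i}I^a[p-n-1]_{z_i}$ piece of $\mc H_{i,p}$---nor is Lemma~\ref{lem:gsrt} a regularization device here. The $F$ term has the \emph{same} self-interaction origin at $u$ as your $K$ term. The expansion of $f_{I^a}$ at $t=u$ carries, beyond its pole $I^a[-1]_u$, the Taylor tail
\[
-\sum_{p=1}^{T-1}\sum_{n\ge0}\frac{\omega^p}{(\omega^p-1)^{n+1}u^{n+1}}(\sigma^p I^a)[n]_u
\]
coming from the nearby poles $\omega^p u$. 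The $n=1$ piece, acting on $I_a[-1]\vac$, gives the central $K$ contribution you identified; the $n=0$ piece gives $\tfrac12(\sigma^p I^a)[0]I_a[-1]\vac=\tfrac12[\sigma^p I^a,I_a][-1]\vac$, and summed over $p$ with the displayed coefficient this is exactly $u^{-1}F[-1]\vac$, which after one further swap becomes $u^{-1}F(u)$. The anticommutator $\tfrac12\{I_a[\cdot],(\sigma^l I^a)[\cdot]\}$ in $\mc H_{i,p}$ is unrelated: it arises purely from symmetrizing the partial-fraction identity $\frac{1}{(u-a)(u-b)}=\frac{1}{a-b}\bigl(\frac{1}{u-a}-\frac{1}{u-b}\bigr)$ in the $i=j$, $k\ne l$ terms of $\tfrac12 I_a(u)I^a(u)$.
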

\begin{proof}
The proof is given in Appendix \ref{qHapp}.
\end{proof}

\subsection{Regular singularities and shift-of-argument/twisted boundary conditions}
In the special case when $n_{z_i}=1$ for $i=1,\dots, N$ and $n_0 = 1$, we obtain commutative subalgebras 
\be \Gaud^{n_\8,(1),1}_{\8,\zs,0}(\g,\sigma)^\Gamma \subset 
\left(U\Big(\big(\g^\op\ox t^{-1} \C[t^{-1}]\big)^\Gamma \big/ \big( \g^\op \ox t^{-n_\8} \C[t^{-1}] \big)^\Gamma\Big) \ox U(\g)^{\ox N}\ox U(\g^\sigma)\right)^{\g^\sigma}.\nn\ee

If furthermore we set $n_\8=1$ then we obtain the commutative subalgebra
\be \Gaud^{1,(1),1}_{\8,\zs,0}(\g,\sigma)^\Gamma \subset (U(\g)^{\ox N}\ox U(\g^\sigma))^{\g^\sigma}. \nn\ee

Now consider setting $n_\8=2$. 
Pick any linear map 
\be \chi:\Pi_{-1}\g \to \C.\nn\ee 
The Lie algebra $\left(\g^\op\ox t^{-1} \C[t^{-1}]\right)^\Gamma \big/ \left( \g^\op \ox t^{-2} \C[t^{-1}] \right)^\Gamma$ is commutative, and is canonically isomorphic to $\Pi_{-1}\g$ as a vector space. We may therefore regard $\chi$ as an algebra homomorphism $\chi: U\big(\left(\g^\op\ox t^{-1} \C[t^{-1}]\right)^\Gamma \big/ \left( \g^\op \ox t^{-2} \C[t^{-1}] \right)^\Gamma\big)\to \C$. 
Let then 
\be \Achi := (\chi\ox \id^{\otimes N} \ox \id)\left(\Gaud^{2,(1),1}_{\8,\zs,0}(\g,\sigma)^\Gamma\right) .\nn\ee
This defines a commutative subalgebra 
\be \Achi \subset \left(U(\g)^{\ox N}\ox U(\g^\sigma)\right)^{\g^\sigma_\chi},\nn\ee
where $\g^\sigma_\chi = \left\{ X\in \g^\sigma: \chi([X,Y])=0 \text{ for all } Y\in \Pi_{-1} \g\right\}$ denotes the centralizer of $\chi$ under the coadjoint action.
Note that in the special case $\chi = 0$ we recover $\Gaud^{1,(1),1}_{\8,\zs,0}(\g,\sigma)^\Gamma$.

For any $X \in \g$ and $i = 1, \ldots, N$ we let $X^{(i)}$ denote the element of $U(\g)^{\otimes N} \otimes U(\g^\sigma)$ with $X$ in the $i^{\rm th}$ tensor factor and a $1$ everywhere else. Similarly, for $X \in \g^\sigma$ we let $X^{(0)}$ be the element $1^{\otimes N} \otimes X$ of $U(\g)^{\otimes N} \otimes U(\g^\sigma)$. The only non-zero Hamiltonians of Proposition \ref{qHprop} above are then
\be
\mc H_{i,0} =  \sum_{\substack{j=1\\j\neq i}}^N \sum_{l=0}^{T-1}  \frac{I_a^{(i)}  (\sigma^{l} I^a)^{(j)}}{(z_i - \omega^{-l} z_j)} +\sum_{l=1}^{T-1}  \frac{(\sigma^l I_a)^{(i)} I^{a(i)}}{(1  - \omega^{-l}) z_i} 
+ T \frac {1}{z_{i}} I_a^{(i)} (\Pi_0 I^a)^{(0)} + T I^{a(i)} \chi(\Pi_{-1} I_a),
\nn\ee
\be \mc H_{i,1} = \half I_a^{(i)} I^{a(i)},\nn\ee
\be
\mc H_{0,0} = T^2 \sum_{i=1}^N \frac{(-1)}{z_{i}} (\Pi_{-1}I_a)^{(i)} (\Pi_0 I^a)^{(0)} + T^2 (\Pi_0 I_a)^{(0)} \chi(\Pi_{-1} I^a),
\nn\ee
\be
\mc H_{0,1} = \frac{T^2}{2} (\Pi_0 I_a)^{(0)} (\Pi_{0} I^a)^{(0)} + T F^{(0)},
\nn\ee
and 
\be
\mc H_{\8,0} = \frac{T^2}{2} \chi(\Pi_{-1} I_a) \chi(\Pi_{-1} I^a).
\nn\ee
Note that $\mc H_{0,0} = 0$ unless $T=1$ and $\mc H_{\8,0} = 0$ unless $T=1$ or $2$.

\begin{rem} The cyclotomic Gaudin algebra introduced in \cite{VY1} is the commutative subalgebra $\Gaud^{1,(1),0}_{\8,\zs,0}(\g,\sigma)^\Gamma \subset (U(\g)^{\ox N})^{\g^\sigma}$. It is the image of $\Gaud^{1,(1),1}_{\8,\zs,0}(\g,\sigma)^\Gamma$ under $\id^{\ox N} \ox \eps$, where $\eps: U(\g^\sigma) \to \C$ is the counit. 
\end{rem}

\begin{rem} The algebra $\Achi$ is a cyclotomic generalisation of the \emph{quantum shift-of-argument subalgebra}; see \cite{Ryb06,FFT,FFRb}. 
\end{rem}
\begin{rem}
Sometimes setting $\chi\neq 0$ is called adding \emph{twisted boundary conditions}. The name comes from the Heisenberg XXX spin chain of which the usual Gaudin model is a limit.  
\end{rem}

\section{Statement of main results}\label{sec: mr}  
\subsection{Cartan data and Verma modules}\label{sec: cd}
We fix a Cartan decomposition $\g = \n_- \oplus \h \oplus \n$ of $\g$. Let $\Delta^+\subset \h^*$ be the set of positive roots of $\g$ and $\{\alpha_i\}_{i\in I} \subset \Delta^+$ the set of simple roots, where $i$ runs over the set $I$ of nodes of the Dynkin diagram of $\g$. Let $E_{\alpha}$ (resp. $F_\alpha$) be a root vector of weight $\alpha$ (resp. $-\alpha$) for each root $\alpha\in \Delta^+$, and $H_\alpha \equiv \alpha^\vee := [E_\alpha,F_\alpha]$ the corresponding coroot. Overloading notation somewhat, we write $H_i:= H_{\alpha_i}$, $i\in I$. Then $\{H_i\}_{i\in I}\cup \{E_\alpha,F_\alpha\}_{\alpha\in \Delta^+}$ is a Cartan-Weyl basis of $\g$. 
We shall assume the Cartan decomposition has been chosen to be compatible with the automorphism $\sigma:\g\to \g$, in the sense that $\sigma(\h) = \h$, $\sigma(\n) = \n$ and $\sigma(\n^-) = \n^-$. 
(Such a choice is always possible \cite{KacBook}.)

Let $M_{\lambda}$ denote the \emph{Verma module} over $\g$ with highest weight $\lambda\in \h^*$, namely 
\be M_\lambda := U(\g) \ox_{U(\h\oplus \n)} \C v_\lambda,\ee 
where $\C v_{\lambda}$ is the one-dimensional module over $\h\oplus \n$ generated by a vector $v_{\lambda}$ with $\n \on v_{\lambda}=0$ and $h\on v_{\lambda} =  \lambda(h)v_\lambda$ for all $h\in \h$. 
Similarly, let $M_\lambda^\sigma$ denote the Verma module over $\g^\sigma$ with highest weight $\lambda\in \h^{*,\sigma}$,  
\be M_\lambda^\sigma := U(\g^\sigma) \ox_{U(\h^\sigma\oplus \n^\sigma)} \C v_\lambda,\ee

\subsection{The weight function}
Let $\lambda_1,\dots,\lambda_N\in \h^*$ be $\g$-weights.
Let $\lambda_0\in \h^{*,\sigma}$ be a $\g^\sigma$-weight.

As above, let $z_1,\dots,z_N$ be a collection of $N\in \Z_{\geq 0}$ non-zero points in $\C$ such that $\Gamma z_i \cap \Gamma z_j = \emptyset$ for all $1\leq i< j\leq N$. 
In addition, let $w_1,\dots,w_m$ be a collection of $m\in \Z_{\geq 0}$ non-zero points in $\C$ such that $\Gamma w_i \cap \Gamma w_j = \emptyset$ for all $1\leq i<j\leq m$ and such that $\Gamma w_i\cap \Gamma z_j = \emptyset$ for all $1\leq i\leq m$ and $1\leq j\leq N$. 
Let $c(1),\dots, c(m)$ be 
elements of $I$. We call $c(i)$ the \emph{colour} of the variable $w_i$. 

Recall the projectors $\Pi_k$ from \eqref{Pidef}, and in particular $\Pi_0$. 
The \emph{(cyclotomic) weight function} $\psi$ associated to these data is the element
\be \psi \in \left(\bigotimes_{i=1}^N M_{\lambda_i}\ox M^\sigma_{\lambda_0}\right)_{\lambda_\8}, \quad\text{where}\quad
\lambda_\8 := \lambda_0 +\sum_{i=1}^N\Pi_0 \lambda_i  - \sum_{j=1}^m \Pi_0 \alpha_{c(j)} \label{l8def},\ee
defined recursively as follows. Define linear maps
$$\theta_s: \bigotimes_{i=1}^N M_{\lambda_i}\ox M^\sigma_{\lambda_0} \ox \underbrace{\n_- \ox\dots\ox \n_-}_s \longrightarrow \bigotimes_{i=1}^N M_{\lambda_i}\ox M^\sigma_{\lambda_0} \ox \underbrace{\n_- \ox\dots\ox \n_-}_{s-1} $$ for $s=1,2,\dots,m$, by
\begin{multline} \label{swappingtau} \theta_s(x_1\ox \dots\ox x_N\ox x_0 \ox y_1\ox\dots\ox y_s)\\
 =\frac{x_1\ox\dots\ox\dots \ox x_N\ox(T\Pi_0  y_s)x_0\ox y_1\ox \dots\ox  y_{s-1})}{w_{s}} \\\nn
+ \sum_{i=1}^N \sum_{j\in \Z_T} \frac{ x_1\ox\dots\ox x_{i-1} \ox (\sigma^jy_s) x_i\ox x_{i+1}\ox\dots \ox x_N\ox x_0\ox y_1\ox \dots\ox  y_{s-1})}{w_{s} - \omega^{-j}z_{i}} \\\nn
+  \sum_{i=1}^{s-1}\sum_{j\in \Z_T} \frac{x_1\ox \dots\ox  x_N\ox x_0\ox y_1\ox \dots\ox  y_{i-1}\ox [\sigma^{j}y_s,  y_i] \ox y_{i+1}\ox  \dots\ox  y_{s-1}}{w_{s} - \omega^{-j}w_{i}}\end{multline}
Then the weight function is by definition the element
\be
\psi := (-1)^m (\theta_1\circ \dots \circ \theta_m)(v_{\lambda_1}\ox \dots\ox v_{\lambda_N}\ox v_{\lambda_0}\ox  F_{\alpha_{c(1)}}\ox F_{\alpha_{c(2)}}\ox \dots\ox F_{\alpha_{c(m)}}).\label{psidef}\ee

\subsection{The weight $\Lambda_0$}
Define a weight $\Lambda_0\in \h^{*,\sigma}$ by 
\be \label{lambda0def}
\Lambda_0(h) := \sum_{r=1}^{T-1} \frac {\tr_\n (\sigma^{-r} \ad_h)} {1 - \omega^r} 
\ee
where $\ad_h:\n\to\n; X\mapsto [h,X]$ is the adjoint action of $\h$ on $\n$.  

For a more explicit expression for $\Lambda_0$, note that \cite[\S8.6]{KacBook}
\be \sigma(E_{\alpha}) = \tau_\alpha E_{\sigma(\alpha)},\qquad \sigma(H_i) = H_{\sigma(i)},\qquad \sigma(F_{\alpha}) = \tau_{\alpha}^{-1} F_{\sigma(\alpha)}.\label{sigmaE}\ee
Here, by overloading notation, we write $\sigma:\Delta^+\to\Delta^+$ for the symmetry of the root system, coming in turn from a symmetry $\sigma:I\to I$ of the Dynkin diagram. The numbers $\tau_\alpha$, $\alpha\in \Delta^+$, are certain roots of unity in $\Gamma= \omega^\Z$. (So the ``inner part'' of the automorphism $\sigma: \g\to \g$ is encoded in the choice of $\tau_{\alpha_i}$, $i\in I$; the remaining $\tau_\alpha$ are fixed by this choice.)

Then 
\be \Lambda_0 = \sum_{r=1}^{T-1} \frac{1}{1 - \omega^r} \sum_{\substack{\alpha\in \Delta^+\\\sigma^r(\alpha)=\alpha}} \left( \prod_{p=0}^{r-1} \tau_{\sigma^p(\alpha)}^{-1} \right) \alpha.\label{l0def}\ee

\subsection{The cyclotomic Bethe equations}
Given a complex vector space $A$, for any linear map $\eta : A \to \C$ we define $\lsigma \eta := \eta \circ \sigma^{-1}$. Let $\chi\in \h^*$ be such that $\lsigma \chi = \omega \chi$.
The \emph{cyclotomic Bethe equations (with twisted boundaries)} are
\begin{multline} \label{tbe}
0= \sum_{r=0}^{T-1} \sum_{i=1}^N\frac{\langle \alpha_{c(j)},\lsigma^r\lambda_i\rangle}{w_j-\omega^rz_i} - \sum_{r=0}^{T-1} \sum_{\substack{k=1\\k\neq j}}^m \frac{\langle \alpha_{c(j)},\lsigma^r\alpha_{c(k)}\rangle}{w_j-\omega^rw_k} \\+
\frac{1}{w_j}
\left(- \frac{1}{2} \sum_{r=1}^{T-1} \langle \alpha_{c(j)},\lsigma^r\alpha_{c(j)}\rangle + \langle \alpha_{c(j)}, \lambda_0+\Lambda_0 \rangle \right) + \langle \alpha_{c(j)}, \chi \rangle.
\end{multline}

\subsection{Eigenvectors of the cyclotomic Gaudin algebra} \label{sec: BAE}

Let $\chi\in \h^*$ be such that $\lsigma \chi = \omega \chi$. Extend $\chi$ to an element of $\g^*$ by setting $\chi(\n) = \chi(\n_-) = 0$. Then a $\chi$ is a linear map $\Pi_{-1} \g \to \C$. 

Suppose $(w_1,\dots,w_m;c(1),\dots,c(m))$ are such that cyclotomic Bethe equations \eqref{tbe} are satisfied.

\begin{thm} \label{mthm}
The weight function is an eigenvector of the algebra $\Achi$. In particular, 
the eigenvalues $E_i$ of the quadratic cyclotomic Gaudin Hamiltonians $\mathcal H_{i,0}$ are given by 
\be 
E_i := \sum_{\substack{j=1\\j\neq i}}^N \sum_{s=0}^{T-1} \frac{\langle \lambda_i,\lsigma^s \lambda_j\rangle}{z_i-\omega^sz_j} 
- \sum_{j=1}^m \sum_{s=0}^{T-1} \frac{\langle \lambda_i,\lsigma^s \alpha_{c(j)} \rangle}{z_i-\omega^sw_j}
+ \frac{1}{z_i} \left( \langle \lambda_i, \lambda_0+\Lambda_0\rangle
+ \frac{1}{2} \sum_{s=1}^{T-1} \langle \lambda_i,\lsigma^s \lambda_i\rangle \right) + \langle \lambda_i,\chi\rangle.
\nn\ee
\end{thm}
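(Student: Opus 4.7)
The plan is to adapt to the cyclotomic setting with irregular singularity the Wakimoto-module approach to the Bethe ansatz of \cite{FFR,Freview}: Bethe vectors will arise as images, in the coinvariants of Proposition \ref{prop:isom2}, of screened highest-weight vectors of a tensor product of $\Gamma$-equivariant Wakimoto modules at critical level.

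First, I would attach the appropriate Wakimoto modules: a copy $\mathsf W_{\lambda_i}$ at each $z_i$ over $\gh_{z_i}$, a $\Gamma$-equivariant Wakimoto module $\mathsf W^\Gamma_{\lambda_0}$ at $0$ over $\gh^\Gamma_0$, and an \emph{irregular} twisted Wakimoto module at $\8$ over $\gh^{\op,\Gamma}_\8$ whose highest-weight vector $\ket{\chi}_\8$ encodes $\chi:\Pi_{-1}\g\to\C$ as a shift-of-argument parameter. Each has a cyclic highest-weight vector $\ket{\mu}$ annihilated by the relevant positive modes, and on it every singular vector $Z\in \ssc(\gh)\subset \Vcrit$ acts as the scalar determined by a Miura oper. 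Via Propositions \ref{prop:isom1}--\ref{prop:isom2} the coinvariants of this tensor product are identified with $\bigotimes_{i=1}^N M_{\lambda_i} \ox M^\sigma_{\lambda_0}$, and I would show that the weight function $\psi$ of \eqref{psidef} is the image of
\[
\prod_{s=1}^m \tilde{\mathsf S}_{c(s)}(w_s) \cdot \Big(\ket{\chi}_\8 \ox \bigotimes_{i=1}^N \ket{\lambda_i}_{z_i} \ox \ket{\lambda_0}_0 \Big),
\]
where $\tilde{\mathsf S}_i(w)$ is the $\Gamma$-equivariant screening current of colour $i\in I$. The three summands of the recursion \eqref{swappingtau} for $\theta_s$ are then precisely the OPEs of $\tilde{\mathsf S}_{c(s)}(w_s)$ with, respectively, the Wakimoto vector at $0$ (the factor $T\Pi_0$ arising from the projection onto $\g^\sigma$), those at the orbit points $\omega^{-j}z_i$ (summed over $j\in \Z/T\Z$), and the previously-inserted screening currents at $\omega^{-j}w_i$.

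Granting this realization, the standard FFR argument applies: every element of $\Gaud_\zsa^{2,(1),1}(\g,\sigma)^\Gamma$ commutes with the screening charges, so its action on the screened vacuum reduces to its scalar action on the unscreened one \emph{up to residues at each $u=w_s$}. The vanishing of those residues is exactly the cyclotomic Bethe equations \eqref{tbe}: the $\lambda_i$-, $\alpha_{c(k)}$- and $\lambda_0$-contributions come from the $\h$-currents acting on the corresponding Wakimoto vectors; the $\Lambda_0$-shift is the cyclotomic analogue of the $\rho$-shift, generated by normal ordering in the Wakimoto realization restricted to $\n^\sigma$; and the $\langle \alpha_{c(j)},\chi\rangle$ term is the leading behaviour at $\8$ of the irregular Wakimoto vector. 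Thus once the Bethe equations are imposed, $\psi$ is an eigenvector of $\Achi$.

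Finally, to extract the eigenvalues of the quadratic Hamiltonians I would use that, on the unscreened vacuum, the Segal--Sugawara vector $S$ acts as $\tfrac12 \langle \varphi(u),\varphi(u)\rangle$ modulo a purely central contribution absorbed into the $u^{-2}K$ term of Proposition \ref{qHprop}, where the Miura connection is
\[
\varphi(u) = \chi + \frac{\lambda_0+\Lambda_0}{u} + \sum_{i=1}^N \sum_{s=0}^{T-1} \frac{\lsigma^s \lambda_i}{u-\omega^s z_i} - \sum_{j=1}^m \sum_{s=0}^{T-1} \frac{\lsigma^s \alpha_{c(j)}}{u-\omega^s w_j}.
\]
Taking the residue at $u=z_i$ and using $\sigma$-invariance of $\langle\cdot,\cdot\rangle$ to pair the $(j=i,s)$ and $(j=i,T-s)$ contributions via $\tfrac{1}{1-\omega^s}+\tfrac{1}{1-\omega^{-s}}=1$ folds them into the $\tfrac{1}{2z_i}\sum_{s=1}^{T-1}\langle\lambda_i,\lsigma^s\lambda_i\rangle$ term, producing exactly $E_i$. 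The main obstacle I foresee is the very first step: arranging a $\Gamma$-equivariant Wakimoto realization at the fixed points $0$ and $\8$ in which (i) the screening currents are mutually local on the $T$-fold cover of $\CP^1$ and commute with the Feigin--Frenkel centre, and (ii) $\ket{\chi}_\8$ interacts correctly with the Miura connection at $\8$; once these are in place what remains is essentially the residue bookkeeping sketched above.
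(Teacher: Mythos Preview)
Your proposal is essentially the paper's approach: it too attaches (twisted) Wakimoto modules at the $z_i$, at $0$, at $\8$, and at the Bethe roots $w_j$ (where your screening-current insertions are realized as the $\gh$-singular vectors $G_{c(j)}[-1]\wac_{w_j}$), uses the one-dimensionality of the Heisenberg/Cartan coinvariants to construct a $\g^\Gamma_{\zsa}$-invariant functional $\psi_{\nu(t)}$, and then identifies the resulting vector in $\bigotimes_i M_{\lambda_i}\ox M^\sigma_{\lambda_0}$ with the recursive weight function $\psi$ by an explicit swapping computation. One normalization to watch: the paper's Miura current $\nu(t)$ in \eqref{chitdef} carries $\tfrac{T\lambda_0+\Lambda_0}{t}$ at the origin (the extra factor of $T$ coming from the twisted-Wakimoto isomorphism \eqref{wmig}), and the Sugawara eigenvalue on the Wakimoto side is $\tfrac12\langle\nu,\nu\rangle$ \emph{plus} a $\rho$-type derivative term rather than a purely central one---though, as you note, that derivative term does not contribute to the simple-pole residue $E_i$.
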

In particular, when $\chi=0$ the weight function is an eigenvector of the algebra $\Gaud^{1,(1),1}_{\8,\zs,0}(\g,\sigma)^\Gamma$. Moreover, in that case we have the following.
\begin{thm}\label{chi0thm}
In the special case $\chi=0$, the weight function $\psi$ belongs to the space of singular vectors $\left( \bigotimes_{i=1}^N M_{\lambda_i} \ox M_{\lambda_0}^{\sigma} \right)^{\n^\sigma}_{\lambda_\8}$. 
\end{thm}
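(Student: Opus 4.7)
The claim is that $E \cdot \psi = 0$ for every $E \in \n^\sigma$, where $E$ acts diagonally through the embedding \eqref{ed}. The plan is the cyclotomic adaptation of the classical singular-vector argument for Bethe vectors: extend the diagonal action to an operator $\hat E$ on the enlarged space $\bigotimes_{i=1}^N M_{\lambda_i} \ox M_{\lambda_0}^\sigma \ox \n_-^{\ox s}$ by also acting by $\ad_E$ on each auxiliary $\n_-$ factor, and show $[\hat E, \theta_s] = 0$ for every $s$. This commutation rests on two facts. Since $E \in \g^\sigma$ commutes with $\sigma$, we have $[E, \sigma^j y] = \sigma^j [E, y]$ and $[E, T\Pi_0 y] = T\Pi_0 [E, y]$; together with the Leibniz rule for the $E$-action on products such as $(\sigma^j y) x_i$ and $(T\Pi_0 y)x_0$, this handles the first two summands of \eqref{swappingtau}. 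For the third summand one uses the Jacobi identity $\ad_E [\sigma^j y_s, y_i] = [\sigma^j [E,y_s], y_i] + [\sigma^j y_s, [E,y_i]]$.

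Given the commutation, and writing $\Psi_0 := v_{\lambda_1} \ox \dots \ox v_{\lambda_N} \ox v_{\lambda_0} \ox F_{\alpha_{c(1)}} \ox \dots \ox F_{\alpha_{c(m)}}$, one has
\[
E \cdot \psi = (-1)^m (\theta_1 \circ \dots \circ \theta_m)(\hat E \Psi_0).
\]
Since $E \in \n^\sigma \subset \n$ annihilates each $v_{\lambda_i}$ and $E \in \n^\sigma$ annihilates $v_{\lambda_0} \in M_{\lambda_0}^\sigma$, the diagonal part of $\hat E$ kills $\Psi_0$, leaving
\[
\hat E \Psi_0 = \sum_{k=1}^m v_{\lambda_1} \ox \dots \ox v_{\lambda_N} \ox v_{\lambda_0} \ox F_{\alpha_{c(1)}} \ox \dots \ox [E, F_{\alpha_{c(k)}}] \ox \dots \ox F_{\alpha_{c(m)}},
\]
a sum in which (for $E = E_\alpha$ with $\alpha$ simple and $c(k)=\alpha$) a Cartan element $H_\alpha$ sits in the $k$-th auxiliary slot. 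The $\theta_s$ are extended in the obvious way to accept $\g$-valued arguments.

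The remaining $\theta$'s then process each Cartan-at-position-$k$ summand using $H_\alpha v_{\lambda_i} = \langle \alpha, \lambda_i\rangle v_{\lambda_i}$, $H_\alpha v_{\lambda_0} = \langle \alpha, \lambda_0\rangle v_{\lambda_0}$, and $[H_\alpha, F_\beta] = -\langle \alpha, \beta\rangle F_\beta$. Careful collection of the resulting rational-function coefficients shows that, up to the overall sign $(-1)^m$, the $k$-th summand equals the "reduced" weight function $\psi^{(k)}$ (with the pair $(w_k, F_{\alpha_{c(k)}})$ omitted) multiplied by precisely the left-hand side of the cyclotomic Bethe equation \eqref{tbe} for $w_k$, evaluated at $\chi = 0$. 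Since the Bethe equations are satisfied by hypothesis, each such coefficient vanishes, yielding $E \cdot \psi = 0$. Summing over a set of generators of $\n^\sigma$ then gives the claim.

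The main obstacle is this last bookkeeping step. One has to verify that the rational coefficients produced by commuting the Cartan element at position $k$ through the remaining $\theta$'s reproduce the full LHS of \eqref{tbe} at $\chi = 0$ exactly. In particular, the $\frac{1}{w_j}\langle \alpha_{c(j)}, \Lambda_0\rangle$ contribution should arise from the "same-site" cyclotomic terms in $\theta_s$ (the interactions of $y_s$ with itself via the $\sigma^r$-twist for $r\neq 0$), combined with the $\tau_\alpha$ twists from \eqref{sigmaE} — reproducing the explicit formula \eqref{l0def} for $\Lambda_0$. The absence of a $\langle \alpha_{c(j)}, \chi\rangle$ term is essential for this matching to close: when $\chi \neq 0$ the same calculation shows $E \cdot \psi$ does not vanish in general. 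In the untwisted case $T = 1$ (where $\Lambda_0 = 0$ and no $\tau$ twists appear) the argument reduces to the standard Schechtman--Varchenko-type computation; the cyclotomic version is parallel in structure but combinatorially heavier.
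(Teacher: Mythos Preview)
Your approach is genuinely different from the paper's, and it has a real gap.

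\textbf{What the paper does.} The paper's proof is structural rather than combinatorial. When $\chi=0$, the one-dimensional module $\C_\chi$ at infinity can be enlarged to the full contragredient Verma module $M^{\sigma,\cai}_{\lambda_\infty}$ over $\g^\sigma$, now regarded as a module over $(\g^{\op}\otimes \C[[t^{-1}]])^\Gamma$ (including the zero modes). With this enlargement, Proposition~\ref{prop:isom2} shows that the space of $\gG$-coinvariants of the induced modules is isomorphic to the space of $\g^\sigma$-coinvariants of $M^{\sigma,\cai}_{\lambda_\infty}\otimes\bigotimes_i M^*_{\lambda_i}\otimes M^{*,\sigma}_{\lambda_0}$. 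Thus $\psi_{\nu(t)}$ becomes a $\g^\sigma$-invariant functional on this tensor product, and Proposition~\ref{prop: hom} then identifies such functionals with elements of $\big(\bigotimes_i M_{\lambda_i}\otimes M^\sigma_{\lambda_0}\big)^{\n^\sigma}_{\lambda_\infty}$. The Bethe equations are not used again in this argument; singularity follows from $\g^\sigma$-invariance of coinvariants.

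\textbf{The gap in your argument.} The commutation $[\hat E,\theta_s]=0$ is fine, and reducing to $(\theta_1\circ\cdots\circ\theta_m)(\hat E\Psi_0)$ is correct. The problem is the final bookkeeping step, which you flag yourself but then mis-diagnose. You assert that the $\tfrac{1}{w_j}\langle\alpha_{c(j)},\Lambda_0\rangle$ contribution arises from ``same-site cyclotomic terms in $\theta_s$ (the interactions of $y_s$ with itself via the $\sigma^r$-twist for $r\neq 0$)''. But there are no such terms: in the definition \eqref{swappingtau} the commutator sum runs over $i=1,\dots,s-1$ only, so $y_s$ never interacts with itself. Processing a Cartan element $H_{\alpha_{c(k)}}$ through $\theta_k$ produces only the $\frac{T\lambda_0(\Pi_0 H)}{w_k}$ contribution from the origin, the $\sum_{j,r}\frac{\lambda_i(\sigma^r H)}{w_k-\omega^{-r}z_i}$ contributions from the sites, and commutator contributions from the remaining Bethe-root slots. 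Neither $\Lambda_0$ nor the $-\tfrac12\sum_{r\neq 0}\langle\alpha_{c(j)},\lsigma^r\alpha_{c(j)}\rangle$ term appears separately in this computation. If the direct approach is to succeed, these two pieces of the Bethe equation must combine or cancel against something else (and a small computation in, say, $\mf{sl}_3$ with the diagram involution suggests they do cancel against each other), but you have not identified the mechanism, and your proposed source for $\Lambda_0$ is simply absent from the formulas. So as written, the argument does not close: you have outlined the standard strategy but not established that the coefficients you obtain are the Bethe equations \eqref{tbe} at $\chi=0$.
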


\begin{rem} We have not shown that the weight function is non-zero. When $\sigma$ is a diagram automorphism, this is proved in \cite{VY16}.
\end{rem}

\section{Proofs}
\subsection{Restricted duals and contragredient Verma modules}
Given a module $M$ over $\g$ we write $(M)_{\mu}$ for the subspace of weight $\mu \in \h^*$,
\be (M)_\mu := \{ v\in M : \text{ there exists $n\in \Z_{\geq 1}$ such that } (h - \mu(h) 1)^n \on v = 0 \text{ for all $h\in \h$}\}. \ee 
The module $M$ is a \emph{weight module} if $M = \bigoplus_{\mu\in \h^*} (M)_\mu$. 
In this paper we work with weight modules all of whose weight subspaces are of finite dimension. If $M$, $N$ are two such modules, then by $\Hom_\C(M,N)$ we shall always mean the restricted space of linear maps 
\be \Hom_\C(M,N) := \bigoplus_{\mu,\nu\in \h^*} \Hom_\C\left((M)_\mu,(N)_\nu\right). \nn\ee
In particular we shall write $M^*:=\Hom_\C(M,\C) = \bigoplus_{\mu\in \h^*} ((M)_\mu)^*$, \ie our duals are restricted duals. We have $\Hom_\C(M,N) = \Hom_\C(M\ox N^*,\C) = \Hom_\C(\C,M^*\ox N)= M^* \ox N$. 

The restricted dual $M^*_\lambda$ of the Verma module $M_\lambda$ is naturally a right $U(\g)$-module. We may twist by any anti-automorphism of $U(\g)$ to obtain a left module. The Cartan anti-automorphism $\cai: U(\g) \to U(\g)$ is defined by
\be \cai (H_i) = H_i, \quad i\in I,\quad\text{and}\quad \cai(E_\alpha) = F_\alpha,\quad \cai(F_\alpha) = E_\alpha, \quad \alpha\in \Delta^+.\ee
It obeys $\cai^2=\id$. The twist of $M^*_\lambda$ by $\cai$ is the left $U(\g)$-module called the \emph{contragredient Verma module}. Henceforth by $M^*_\lambda$ we shall always mean the restricted dual equipped with this left $U(\g)$-module structure. That is
\be (x\on f)(v) := f(\cai(x) \on v), \qquad f\in M^*_\lambda, \quad x\in \g, \quad v\in M_\lambda.\nn\ee
See e.g. \cite[\S3.3]{HumphreysBGG}. 

Let $S:U(\g)\to U(\g)$ be the antipode map, \ie the extension of the map $\g\to \g;X\to -X$ to an anti-automorphism of $U(\g)$. We have the automorphism $\cai \circ S = S\circ \cai$ of $\g$. Let us write $(M_\lambda^*)^{\cai\circ S}$ for the left $U(\g)$-module obtained by twisting by this automorphism. In other words $(M_\lambda^*)^{\cai\circ S}$ is the dual of $M_\lambda$ in the usual Hopf-algebraic sense.  
Hence we have 
\be \Hom_\g(A,B\ox M_\lambda) = \Hom_\g(A\ox (M_\lambda^*)^{\cai\circ S}, B).\label{sho}\ee

Similarly, one has the notion of weight modules, contragredient Verma modules, etc., over $\g^\sigma$.

Given a module $V$ over a Lie algebra $\mf a$, we denote by $V^{\mf a}$ the space of invariants $V^{\mf a} := \{ x\in V: a\on x = 0\,\text{ for all } a\in \mf a\}$.

\subsection{Heisenberg algebras at the marked points}
Let $z_1,\dots,z_N$, $w_1,\dots,w_m$, be as in \S\ref{sec: mr}. For brevity we introduce $p:= N+m$ and $(x_1,\dots,x_p) := (z_1,\dots,z_N,w_1,\dots,w_m)$. Let $\xs := \{ x_1, \ldots, x_p \}$.

Let $\n_\C$ (resp. $\n^*_\C$) denote the vector space $\n$ (resp. $\n^*$) endowed with the structure of a commutative Lie algebra. On the commutative Lie algebra $\n_\C\oplus\n^*_\C$ there is a non-degenerate bilinear skew-symmetric form $\langle\cdot,\cdot\rangle$ defined by
\be \langle X, Y \rangle = Y|_{\n^*}\left(X|_{\n}\right) - X|_{\n^*}\left(Y|_{\n}\right),  \ee
and an action by automorphisms of the group $\Gamma$ given by 
\be \omega. X := \sigma(X|_{\n}) \oplus \lsigma(X|_{\n^*}).\label{Gn}\ee

Let $\Heis_{x_i}$, $i=1,\dots,p$, denote the central extension of the commutative Lie algebra $(\n_\C\oplus \n^*_\C)\ox \C((t-x_i))$, by a one-dimensional centre $\C\mathbf 1_{x_i}$, defined by the cocycle $\res_{t-x_i} \langle f , g \rangle\mathbf 1_{x_i}$.
Let $\Heis_{0}^\Gamma$ denote the extension of $(\n_\C \ox \C((t)))^{\Gamma,0} \oplus (\n^*_\C\ox \C((t)))^{\Gamma,-1}$ by a one-dimensional centre $\C \mathbf 1_{0}$ defined by the cocycle $\res_{t} \langle f , g \rangle\mathbf 1_{0}$. 
Let $\Heis_{\8}^\Gamma$ denote the extension of $(\n_\C \ox \C((t^{-1})))^{\Gamma,0} \oplus (\n^*_\C\ox \C((t^{-1})))^{\Gamma,-1}$ by a one-dimensional centre $\C \mathbf 1_{\8}$ defined by the cocycle $\res_{t^{-1}} t^2 \langle f , g \rangle\mathbf 1_{\8}$. 

Let us give a more explicit description of these Lie algebras in terms of generators and relations. 
To do so, we first construct bases of $\n$ and $\n^*$ adapted to the automorphism $\sigma$. 
Recall the projectors $\Pi_k$, $k\in \Z/T\Z$, from \eqref{Pidef}. By the adjoint action, $\g$ is a module over itself. In particular, it is a module over its Lie subalgebra $\g^\sigma = \Pi_0 \g$. As a $\g^\sigma$-module, $\g = \bigoplus_{k\in \Z/T\Z} \Pi_k \g$. Let $\Delta_k^+$ denote the set of $\g^\sigma$-weights of $\Pi_k \n$ and for $\alpha\in \Delta_k^+$ let $\n_{(k,\alpha)}$ denote the corresponding weight subspace of $\Pi_k \n$.    
We may pick a basis of $\n$ consisting of vectors $E_{(k,\alpha)}\in \n_{(k,\alpha)}$, where $k\in \Z/T\Z, \alpha\in \Delta^+_k$.\footnote{Indeed, suppose $\alpha\in \Delta^+$ is a root of $\g$ such that the orbit $\sigma^\Z \alpha$ has $t$ elements, where $t\in \Z_{\geq 1}$ divides $T$. Then $\sigma^t E_\alpha = \omega^{tm}E_\alpha$ for some unique $m\in \Z/T\Z$. Let $E_{(m-kT/t,\bar\alpha)} := \sum_{j=0}^{t-1} \omega^{-(m-kT/t)j} \sigma^j E_{\alpha}\in \n_{(m-kT/t,\bar\alpha)}$ for $k=0,1,\dots,t-1$. By picking one root from each $\sigma$-orbit, we construct a basis of the required form.} 

We now have two bases of $\n$, namely $E_{\alpha}$, $\alpha\in \Delta^+$, and $E_{(k,\alpha)}$, $k\in \Z/T\Z$, $\alpha\in \Delta_k^+$. We write $E_{\alpha}^*$, $\alpha\in\Delta^+$, and $E_{(k,\alpha)}^*$, $k\in \Z/T\Z$, $\alpha\in \Delta_k^+$  for their respective dual bases of $\n^*$.

Then $\Heis_{x_i}$ has the following explicit set of generators:
\be a_\alpha[n]_{x_i} := E_{\alpha}\ox (t-x_i)^n, \quad a^*_\alpha[n]_{x_i} := E_\alpha^*\ox (t-x_i)^{n-1}, 
\nn\ee 
where $\alpha\in \Delta^+$ and $n\in \Z$; while explicit sets of generators for $\Heis_{0}^\Gamma$ and $\Heis_{\8}^\Gamma$ are
\begin{alignat}{3} 
a_{(k,\alpha)}[nT+k]_{0} &:= E_{(k,\alpha)}  \ox t^{nT+k} & &\in (\n_\C \ox \C((t)))^{\Gamma,0}, \nn\\    
a^*_{(k,\alpha)}[nT+k]_{0} &:= E_{(k,\alpha)}^*\ox t^{nT+k-1}& &\in (\n_\C^*\ox \C((t)))^{\Gamma,-1} ,\nn
\end{alignat}
and
\begin{alignat}{3} 
a_{(k,\alpha)}[nT+k]_{\8} &:= E_{(k,\alpha)}  \ox t^{nT+k} & &\in (\n_\C \ox \C((t^{-1})))^{\Gamma,0}, \nn\\    
a^*_{(k,\alpha)}[nT+k]_{\8} &:= E_{(k,\alpha)}^*\ox t^{nT+k-1}& &\in (\n_\C^*\ox \C((t^{-1})))^{\Gamma,-1}, \nn
\end{alignat} 
respectively, where $k\in \Z/T\Z$, $\alpha\in \Delta^+_k$, and $n\in \Z$. 

\begin{rem} Note our conventions for the modes at $\8$.  \end{rem}

These generators obey the relations:
\begin{align} 
[a_\alpha[n]_{x_i},a^*_\beta[m]_{x_j}] &= \delta_{ij} \delta_{\alpha\beta} \delta_{n,-m} \mathbf 1_{x_i},\nn\\
[a_{(i,\alpha)}[n]_{0}, a^*_{(j,\beta)}[m]_{0} ]  &= \delta_{ij} \delta_{\alpha\beta} \delta_{n,-m} \frac 1 T \mathbf 1_{0},  \nn\\
[a_{(i,\alpha)}[n]_{\8}, a^*_{(j,\beta)}[m]_{\8} ]  &= \delta_{ij} \delta_{\alpha\beta} \delta_{n,-m} \frac 1 T \mathbf 1_{\8},  
\nn\end{align}
with all other commutators vanishing. 

Each $\Heis_{x_i}$ is isomorphic to the \emph{Heisenberg Lie algebra} $\Heis(\g)$, while $\Heis_{0}^\Gamma$ and $\Heis_{\8}^\Gamma$ are isomorphic to a subalgebra $\Heis(\g)^\Gamma$. Note  that the opposite Lie algebra $\Heis_{\8}^{\Gamma,\op}$ differs from $\Heis_{\8}^\Gamma$ only in the sign of the central extension. 

Let $\Heis_{\8, p, 0}$ denote the extension of 
\begin{multline} (\n_\C \ox \C((t^{-1})))^{\Gamma,0} \oplus (\n^*_\C\ox \C((t^{-1})))^{\Gamma,-1}\\
{}\oplus 
\bigoplus_{i=1}^p (\n_\C\oplus \n^*_\C)\ox \C((t-x_i))\\
\oplus
(\n_\C \ox \C((t)))^{\Gamma,0} \oplus (\n^*_\C\ox \C((t)))^{\Gamma,-1} 
\label{dH}\end{multline}
by a one-dimensional centre $\C \mathbf 1$, defined by the cocycle 
\be\label{cocycle}
\Omega(f, g) :=  \left(\frac 1T \res_{t}  \la f_0,g_0\ra+ \sum_{i = 1}^p \res_{t-x_i} \la f_{x_i},  g_{x_i} \ra 
   - \frac 1 T \res_{t^{-1}} t^2 \la f_\8, g_\8 \ra\right)\mathbf 1
\ee
where $f = (f_\8;f_{x_1},\dots,f_{x_p};f_0)$ and $g = (g_\8;g_{x_1},\dots,g_{x_p}; g_0)$ are elements of the Lie algebra \eqref{dH}. 
Equivalently, $\Heis_{\8, p, 0}$ is the quotient of the direct sum $\Heis_{\8}^{\Gamma,\op}\oplus \bigoplus_{i=1}^p  \Heis_{x_i} \oplus \Heis_{0}^\Gamma$ by the ideal generated by $\mathbf 1_{x_i} - T \mathbf 1_{0}$, $i=1,\dots,p$, and $\mathbf 1_{\8}-\mathbf 1_{0}$. Then $\mathbf 1 := \mathbf 1_{x_i}$.

Define also
\be \h_{\8,p,0} := (\h \ox \C((t^{-1})))^{\Gamma,0} \oplus \bigoplus_{i=1}^p \h \ox \C((t-x_i)) \oplus (\h \ox \C((t)))^{\Gamma,0} \nn\ee
Let us give a set of explicit generators for this commutative Lie algebra $\h_{\8,p,0}$.
Let $H_{(k,a)}$, $a=1,\dots,\dim(\Pi_k\h)$, be a basis of $\Pi_k \h$ for each $k\in \Z/T\Z$. Then $\h_{0,p,\8}$ is the commutative Lie algebra with basis  
\begin{align} 
b_j[n]_{x_i} &:= H_j \ox (t-x_i)^n \in \h\ox \C((t-x_i)),\nn\\
b_{(k,a)}[nT+k]_{0} &:= H_{(k,a)} \ox t^{nT+k} \in (\h\ox \C((t)))^{\Gamma,0},\\
b_{(k,a)}[nT+k]_{\8} &:= H_{(k,a)} \ox t^{nT+k} \in (\h\ox \C((t^{-1})))^{\Gamma,0},\nn
\end{align}
for $j \in I$, $k \in \Z/T\Z$, $a = 1, \ldots, \dim(\Pi_k\h)$ and $n \in \Z$.

\subsection{Wakimoto modules at the marked points}

For each $i=1,\dots,p$, let $\C \wac_{x_i}$ denote the one-dimensional left module over $U((\n_\C \oplus \n_\C^*)\ox \C[[t-x_i]] \oplus \C \mathbf 1_{x_i})$ on which $\mathbf 1_{x_i}$ acts as $1$ and $(\n_\C \oplus \n_\C^*)\ox \C[[t]]$ acts as zero. Define $\Mh_{x_i}$ to be the induced module over $\Heis_{x_i}$, 
\be \Mh_{x_i} := U(\Heis_{x_i}) \ox_{U((\n_\C \oplus \n_\C^*)\ox \C[[t - x_i]] \oplus \C \mathbf 1_{x_i)}} \C \wac_{x_i}.\nn\ee
Suppose we are given an $\h^*$-valued Laurent series $\nu_i \in \h^*\ox \C((t-x_i))$ in the local coordinate $t-x_i$ about the point $x_i$. Let $\C v_{\nu_i}$ denotes the one-dimensional module over $\h\ox\C((t-x_i))$ on which $f. v_{\nu_i} = v_{\nu_i} \res_{t-x_i} \nu_i(f)$, for any $f \in \h\ox\C((t-x_i))$.
Then the \emph{Wakimoto module} $W_{\nu_i}$ is the module over $\Heis_{x_i}\oplus \h\ox \C((t-x_i))$ given by
\be W_{\nu_i} := \Mh_{x_i} \ox \C v_{\nu_i}.\nn\ee
Explicitly, $W_{\nu_i}$ is the Fock module generated by a vacuum vector $\wac_{x_i}$ such that
\be a_{\alpha}[n] \wac_{x_i} = 0 , \quad n\geq 0,\qquad 
a^*_{\alpha}[n] \wac_{x_i} = 0 , \quad n\geq 1,\nn\ee
and
\be b_k[n] \wac_{x_i} = \wac_{x_i} \,\nu_{i,-n-1}(H_k)\nn\ee
where $\nu_i(t-x_i) =: \sum_{s=-S}^\8 \nu_{i,s} (t-x_i)^s$ for some $S \in \Z$ and $\nu_{i,s} \in \h^\ast$. Here and in what follows we use the obvious shorthand $a_{\alpha}[n] \wac_{x_i}$ to denote $a_{\alpha}[n]_{x_i} \wac_{x_i}$, etc.
 
Similarly, let $\C \wac_0$ denote the one-dimensional module over $(\n_\C \ox \C[[t]])^{\Gamma,0} \oplus (\n_\C^* \ox \C[[t]])^{\Gamma,-1} \oplus \C \mathbf 1_{0}$ on which $\mathbf 1_{0}$ acts as $\frac 1 T$ and the first two summands act as zero. Define  $\Mh_{0}^\Gamma$ to be the induced module over $\Heis_{0}^\Gamma$:
\be \Mh_{0}^\Gamma := U(\Heis_{0}^\Gamma) \ox_{U((\n_\C \ox \C[[t]])^{\Gamma,0} \oplus (\n_\C^* \ox \C[[t]])^{\Gamma,-1} \oplus \C \mathbf 1_{0})} \C \wac_0. \nn\ee
Suppose we are given an element $\nu_0\in (\h^*\ox \C((t)))^{\Gamma,-1}$. That is, $\nu_0$ is a $\h^*$-valued Laurent series in $t$ such that $\nu_0(\omega t) = \omega^{-1} \sigma \nu_0(t)$.
Let $\C v_{\nu_0}$  denote the one-dimensional module over $(\h\ox \C((t)))^{\Gamma,0}$ given by
\be f. v_{\nu_0} = v_{\nu_0} \res_{t} \nu_0(f),\ee
for any $f \in (\h\ox \C((t)))^{\Gamma,0}$. We may then consider the \emph{twisted Wakimoto module} $W^\Gamma_{\nu_0}$ defined as the $\Heis_{0}^\Gamma\oplus (\h\ox \C((t)))^{\Gamma,0}$-module
\be W_{\nu_0}^\Gamma := \Mh_{0}^\Gamma \ox \C v_{\nu_0}.\nn\ee
Explicitly, it is the Fock module generated by a vacuum vector $\wac_{0}$ such that $1\wac_{0} = \wac_{0}\frac 1T$,
\be a_{(k,\alpha)}[n] \wac_{0} = 0,  \quad n\geq 0,\qquad
  a^*_{(k,\alpha)}[n] \wac_{0} = 0 , \quad n\geq 1,\nn\ee
and
\be b_{(k,a)}[n] \wac_{0} = \wac_{0} \,\nu_{0,-n-1}(H_{(k,a)})\nn\ee
where $\nu_0(t) =: \sum_{s=-S}^\8 \nu_{0,s} t^s$ for some $S \in \Z$ and $\nu_{0, s} \in \h^\ast$.

Finally, define $$\Heis_{\8}^{\Gamma,+} :=  (\n_\C\ox t^{-1}\C[[t^{-1}]])^{\Gamma,0}\oplus(\n^*_\C\ox t^{-1}\C[[t^{-1}]])^{\Gamma,-1}\oplus \C\mathbf 1_{\8},$$ 
and let $\Mh^{\Gamma,\vee}_{\8}$ denote the right module over $U(\Heis_{\8}^\Gamma)$ induced from the trivial one-dimensional right module $\C\lwac$ over $U(\Heis_{\8}^{\Gamma,+})$ on which $\mathbf 1_{\8}$ acts as $\frac 1T$ and the first two summands act as zero:
\be \Mh_{\8}^{\Gamma,\vee} := \C \lwac\ox_{U(\Heis_{\8}^{\Gamma,+})}  U(\Heis_{\8}^\Gamma). \nn\ee
Suppose $\nu_\8\in (\h^*\ox \C((t^{-1})))^{\Gamma,-1}$. That is, $\nu_\8$ is a $\h^*$-valued Laurent series in $t^{-1}$ such that $\nu_\8(\omega^{-1} t^{-1}) = \omega^{-1} \sigma \nu_\8(t^{-1})$.
Let $\C v^*_{\nu_\8}$ denote the one-dimensional module over $(\h\ox \C((t^{-1})))^{\Gamma,0}$ given by
\be f. v^*_{\nu_\8} = v^*_{\nu_\8} \res_{t^{-1}} (-t^2)\nu_\8(f),\ee
for $f \in (\h\ox \C((t^{-1})))^{\Gamma,0}$.
Then we have the right $U(\Heis_{\8}^\Gamma\oplus (\h\ox \C((t^{-1})))^{\Gamma,0})$-module
\be W_{\nu_\8}^{\Gamma,\vee} := \Mh_{\8}^{\Gamma,\vee} \ox \C v^*_{\nu_\8}.\nn\ee
Explicitly, $W_{\nu_\8}^{\Gamma,\vee}$ is the Fock module generated by a vacuum vector $\lwac$ such that
\be \lwac a_{(k,\alpha)}[n]_{\8} = 0, \quad n< 0,\qquad
    \lwac a^*_{(k,\alpha)}[n]_{\8}  = 0 , \quad n\leq 0,\nn\ee
and
\be \lwac b_{(k,a)}[n]_{\8} = - \nu_{\8,-n-1}(H_{(k,a)}) \lwac \,\label{lwacdef}\ee
where $\nu_\8(t) =: \sum_{s=-\8}^S \nu_{\8,s} t^s$ for some $S \in \Z$ and $\nu_{\8, s} \in \h^\ast$.

\subsection{Free field realization}
The modules $W_{\nu_i}$ are \emph{smooth}. That means, by definition, that for each $v\in W_{\nu_i}$
\be 0=a_\alpha[n]_{x_i} v =a^*_{\alpha}[n]_{x_i} v = b_k[n]_{x_i} v \quad\text{for all}\quad n\gg 0,\nn\ee
for all $\alpha\in \Delta^+$ and $k\in I$.\footnote{That is, for each $v\in W_{\nu_i}$ $\alpha\in \Delta^+$ and $k\in I$ there exists an $n\in \Z$ such that $0=a_\alpha[m]_{x_i} v =a^*_{\alpha}[m]_{x_i} v = b_k[m]_{x_i} v$ for all $m\geq n$.}
Similarly $W_{\nu_0}^\Gamma$ is smooth.
The module $W_{\nu_\8}^{\Gamma,\vee}$ is \emph{co-smooth}. By that we mean that for each $v\in W_{\nu_\8}^{\Gamma,\vee}$,
\be 0= v\, a_{(k,\alpha)}[n]_{\8} = v\, a^*_{(k,\alpha)}[n]_{\8} = v\, b_{(k,a)}[n]_{\8} \quad\text{for all}\quad n\ll 0, \nn\ee
for all $k\in \ZT$, $\alpha \in \Delta^+_k$ and $a\in\{1,\dots,\dim\Pi_k\h\}$.

Let now $\pi_0\simeq  \C[ b_i[n]]_{i\in I;\,n\leq -1}$ be the induced representation of $\h\ox\C((t))$ in which $b_i[n]$ acts as $0$ for all $i\in I$ and all $n\in \Z_{\geq 0}$. Let $\Mh$ be the induced module over the Heisenberg Lie algebra $\Heis(\g)$ with generators $a_\alpha[n]$, $a^*_\alpha[n]$.
Then
\be\WW_0:= \Mh\ox \pi_0,\label{WW0def}\ee 
is an induced representation of $\Heis(\g)\oplus\h\ox\C((t))$. 
Explicitly, $\WW_0$ is the Fock module generated by a vacuum vector $\wac$ such that
\be a_{\alpha}[n] \wac = 0 , \quad n\geq 0,\qquad 
a^*_{\alpha}[n] \wac = 0 , \quad n\geq 1,\nn\ee
and
\be b_k[n]\wac = 0, \quad n\geq 0.\nn\ee
There is a $\Z$-grading on $\WW_0$ defined by $\deg\wac=0$ and $\deg a_\alpha[n] = \deg a^*_\alpha[n] = \deg b_i[n]= n$.

Let us recall some facts about the free-field realization of $\gh$. 

The $\Heis(\g)\oplus\h\ox\C((t))$-module $\WW_0$ is endowed with the structure of a \emph{vertex algebra}. In the notation of \cite{VY2}, we have the ``big'' Lie algebra $\U(\WW_0) := \Lie_{\C((t))} \WW_0$ of all formal modes of states in $\WW_0$. For each of the marked points $x_i$, $i=1,\dots, p$, there is a ``local'' copy $\U(\WW_0)_{x_i}:= \Lie_{\C((t-x_i))}\WW_0$ of $\U(\WW_0)$. It contains $\Heis_{x_i} \oplus \h\ox\C((t-x_i))$ as a subalgebra. 
Moreover, every smooth module over $\Heis_{x_i}\oplus\h\ox\C((t-x_i))$ on which $\bm 1_{x_i}$ acts as $1$ becomes a smooth $\U(\WW_0)_{x_i}$-module in a canonical way. See \emph{e.g.} \cite[Proposition 5.9]{VY2}.

At the fixed-points $0$ and $\8$ we have the local copies $\U(\WW_0)_0 := \Lie_{\C((t))} \WW_0$ and $\U(\WW_0)_\8 := \Lie_{\C((t^{-1}))} \WW_0$ of the big Lie algebra $\U(\WW_0)$. 
The automorphism $\sigma$ of $\Heis(\g)\oplus\h\ox\C((t))$ extends in a unique way to an automorphism of $\WW_0$ as a vertex algebra. So we get an action by automorphisms of the group $\Gamma$ on both of these local Lie algebras. Let $\U(\WW_0)_0^\Gamma$ and $\U(\WW_0)^\Gamma_\8$ denote the respective fixed-point subalgebras. We have embeddings of Lie algebras $\Heis_{0}^\Gamma \oplus(\h\ox\C((t)))^{\Gamma,0} \into \U(\WW_0)_0^\Gamma$ and $\Heis_{\8}^\Gamma \oplus(\h\ox\C((t^{-1})))^{\Gamma,0} \into \U(\WW_0)^\Gamma_\8$.  

Every smooth module over $\Heis_{0}^\Gamma \oplus(\h\ox\C((t)))^{\Gamma,0}$ on which $\bm 1_0$ acts as $\frac 1 T$ becomes a smooth module over $\U(\WW_0)_0^\Gamma$. See \cite[Proposition 5.8]{VY2}.
Every co-smooth module over $\Heis_{\8}^\Gamma \oplus(\h\ox\C((t^{-1})))^{\Gamma,0}$ on which $\bm 1_\8$ acts as $\frac 1 T$ becomes a co-smooth module over $\U(\WW_0)_\8^{\Gamma}$. See Proposition \ref{prop: big co-smooth}.

Now, the vacuum Verma module $\Vcrit$ also has the structure of a vertex algebra. Its ``big'' Lie algebra $\U(\Vcrit)$ contains $\gh$ as a subalgebra, via the embedding sending $K\mapsto (-h^\vee \vac)_{[-1]}$ and $A[n]\mapsto (A[-1]\vac)_{[-1]}$ for $A\in \g$. In this way smooth modules over $\U(\Vcrit)$ pull back to smooth modules over $\gh$ of level $-h^\vee$.

The subalgebra $\U(\Vcrit)^\Gamma$ contains the twisted affine algebra $\gh^\Gamma$ as a subalgebra.

The \emph{Feigin-Frenkel homomorphism} is a homomorphism of vertex algebras $\Vcrit \to \WW_0$. It induces homomorphisms of the big Lie algebras, $\U(\Vcrit)_{x_i} \to \U(\WW_0)_{x_i}$, for every $i=1,\dots,p$, and $\U(\Vcrit)_{0}^\Gamma \to \U(\WW_0)_{0}^\Gamma$ and $\U(\Vcrit)_\8^\Gamma \to \U(\WW_0)_\8^\Gamma$. 

By means of this homomorphism, every smooth module over $\Heis_{x_i}\oplus\h\ox\C((t-x_i))$ on which $\bm 1_{x_i}$ acts as $1$ becomes a smooth module of level $-h^\vee$ over the local copy $\gh_{x_i}$ of $\gh$ at $x_i$, for each $i=1,\dots, p$. Every smooth module over $\Heis_{0}^\Gamma \oplus(\h\ox\C((t)))^{\Gamma,0}$ becomes a smooth module of level $-h^\vee/T$ over $\gh_0^\Gamma$. And every co-smooth module over $\Heis_{\8}^{\op,\Gamma} \oplus(\h\ox\C((t^{-1})))^{\Gamma,0}$ becomes a co-smooth module over $\gh_\8^{\op,\Gamma}$ of level $-h^\vee/T$.

In particular, these statements apply to the Wakimoto modules $W_{\nu_i}$, $i=1,\dots,p$, $W_{\nu_0}^\Gamma$ and $W_{\nu_\8}^{\Gamma,\vee}$. We shall need the following facts about the structure of these modules. 

For each $i=1,\dots,p$, let $\widetilde W_{\nu_i}$ denote the subspace of grade zero, \ie the linear  span of states of the form $a^*_{\alpha_1}[0] \dots a^*_{\alpha_k}[0] \wac_{x_i}$, $k \in \Z_{\geq 0}$. This subspace $\widetilde W_{\nu_i}$ becomes a left module over the subalgebra $U(\g)\subset U(\gh_{x_i})$ generated by zero-modes, $X[0]_{x_i}$ with $X\in \g$.
Suppose $\nu_i$ has at most a simple pole. Then there is \cite{FFR} an isomorphism of left $U(\g)$-modules 
\be\widetilde W_{\nu_i} \cong M^*_{\res_t\nu_i}.\label{wmi}\ee

Let $\widetilde W_{\nu_0}^\Gamma\subset W_{\nu_0}^\Gamma$ denote the subspace of grade zero, \ie the linear span of states of the form $a^*_{(0,\alpha_1)}[0] \dots a^*_{(0,\alpha_k)}[0] \wac_0$, $k\in \Z_{\geq 0}$, $\alpha_1, \dots, \alpha_k\in \Delta_0^+$. It is a left module over $U(\g^\sigma)$. Suppose $\nu_0$ has at most a simple pole. Then there is \cite[Proposition 4.4]{VY1} an isomorphism of left $U(\g^\sigma)$-modules
\be \widetilde W_{\nu_0}^\Gamma \cong M^{*,\sigma}_{\frac1T (\res_t(\nu_0) - \Lambda_0)}\label{wmig}\ee
where for $\lambda\in \h^{*,\sigma}$ we denote by $M^{*,\sigma}_{\lambda}$ the contragredient Verma module over $\g^\sigma$ of highest weight $\lambda$, and where $\Lambda_0$ is the weight given in \eqref{l0def}. 

Let $\C_\chi$ denote the one-dimensional representation of $(\g^\op\ox t^{-1} \C[[t^{-1}]])^\Gamma$ defined by
\begin{equation*}
(\g^\op\ox t^{-1} \C[[t^{-1}]])^\Gamma \longrightarrow (\g^\op\ox t^{-1} \C[[t^{-1}]])^\Gamma/ (\g^\op\ox t^{-2} \C[[t^{-1}]])^\Gamma \simeq_\C \Pi_{-1} \g \overset{\chi}\longrightarrow \C,
\end{equation*}
where $\chi$ is defined as in \S\ref{sec: BAE}.

The module $W^{\Gamma,\vee}_{\nu_\8} = \Mh^{\Gamma,\vee}\ox \C v^*_{\nu_\8}$ is a right module over $U(\gh^\Gamma)$ of level $-h^\vee/T$. 
Let us suppose that 
\be \nu_\8(t) = - \chi + \mc O(1/t) \nn\ee
(\ie we suppose that $t^2\nu_\8$ has at most a double pole in $t^{-1}$).  Then it follows from the explicit form of the Feigin-Frenkel homomorphism and the definition of the quasi-module map $Y_W$ that the vector $\lwac$ of \eqref{lwacdef} obeys
\be \lwac  (\g\ox t^{-2}\C[[t^{-1}]])^{\Gamma} =0, \qquad \lwac (\Pi_{1}A)[-1] = \frac 1T\chi(A) \lwac. \nn\ee
In other words there is an isomorphism of $(\g^\op\ox t^{-1} \C[[t^{-1}]])^\Gamma$-modules 
\be \C \lwac \cong \C_\chi. \label{wmigg}\ee 

\subsection{The generators $G_\alpha[n]$.}\label{Gsec}
The Verma module $M_\lambda = U(\g)\ox_{U(\h \oplus \n)} \C v_\lambda \cong_\C U(\n_-) \ox_\C \C v_\lambda$,  \S\ref{sec: cd}, is by definition a left module over $U(\g)$. In particular it is a left module over the subalgebra $U(\n_-)\subset U(\g)$. But $M_\lambda$ also admits a second left action, call it $\triangleright$, of this subalgebra $U(\n_-)$. These two left actions of $U(\n_-)$ are mutually commuting. They are given by 
\be X \on (n \ox v_\lambda) = Xn \ox v_\lambda\quad\text{ and }\quad X\triangleright (n\ox v_\lambda) = - nX \ox v_{\lambda}, \qquad X\in \n_-, n\in U(\n_-).\nn\ee
In particular 
\be X \on (1\ox v_\lambda) = - X\triangleright (1\ox v_\lambda).\label{GEm}\ee

Correspondingly the contragredient Verma module admits a second left action of $U(\n)$. We write $G_{\alpha}$, $\alpha\in \Delta^+$, for the generators of this second copy of $\n$. So we have $[G_\alpha,E_\beta] = 0$ for all $\alpha,\beta\in \Delta^+$, where $E_\alpha$ are the generators of $\n$ from \S\ref{sec: cd}.

By means of the free-field realization, the Wakimoto module $W_\nu$ becomes a module not only over $\gh$ but also over a copy of $\n\ox \C((t))$ whose generators we denote $G_{\alpha}[n] := G_\alpha\ox t^n$.  In particular $\widetilde W_\nu$ is a module over the subalgebra $U(\n)\subset U(\n\ox \C((t)))$ generated by zero modes. Then \eqref{wmi} is an isomorphism also of modules over this copy of $\n$, \ie $G_\alpha[0]$ acts as $G_\alpha$.

The analogous statements hold for $\n^\sigma$ too. Namely we have generators $G_{(0,\alpha)}$, $\alpha\in \Delta^+_0$, of a second copy of $U(\n^\sigma)$ which acts from the left on contragredient Verma modules $M_\lambda^{*,\sigma}$ over $\n^\sigma$. These generators commute with the generators $E_{(0,\alpha)}$ of the standard left action of $U(\n^\sigma)$. 
And, arguing for $G_{(0,\alpha)}$ as for $E_{(0,\alpha)}$ in the proof of \cite[Proposition 4.4]{VY1}, one checks that \eqref{wmig} an isomorphism of modules over this second copy of $\n^\sigma$.\footnote{Namely, one checks that  
\be (G_{(0,\alpha)}[-1]\wac)^{W}_{(0)} = \sum_{\beta\in \Delta^+_0} R_{(0,\alpha)}^{(0,\beta)}(Ta^*_{(0,\bullet)}[0]) a_{(0,\beta)}[0]\ee
which confirms that $(G_{(0,\alpha)}[-1]\wac)^{W}_{(0)}$ is nothing but the (usual, untwisted) free-field realization of $\n^\sigma_{\langle G\rangle}$.}

\subsection{Global Heisenberg algebra and coinvariants}
Let $\Heis_\xsa^\Gamma$ be the commutative Lie algebra
\be \Heis_\xsa^\Gamma := (\n_\C \ox \C_{\8,\Gamma\bm x,0}(t))^{\Gamma,0} \oplus (\n_\C^*\ox \C_{\8,\Gamma\bm x,0}(t))^{\Gamma,-1}. \nn\ee 
That is, an element of $\Heis_\xsa^\Gamma$ is a pair $(f(t),g(t))$ where $f(t)$ is a rational function valued in $\n_\C$ with poles at most at the points $0,x_1,\dots,x_p,\8$ and obeying the equivariance condition $f(\omega t) = \sigma f(t)$, and where $g(t)$ is a rational function valued in $\n^*_\C$ with poles at most at the points $0,x_1,\dots,x_p,\8$ and obeying the equivariance condition $g(\omega t) = \omega^{-1} \sigma g(t)$.
By virtue of the residue theorem there is an embedding of Lie algebras,
\be \Heis_\xsa^\Gamma \longinto \Heis_{\8,p,0} ,\nn\ee
by taking Laurent expansions, $f(t) \mapsto (-\iota_{t^{-1}}; \iota_{t-x_1},\dots,\iota_{t-x_p}; \iota_{t})f(t)$. Cf. \eqref{iotamap}.

Let us write
\be \h_\xsa^{\Gamma} := (\h \ox \C_{0,\Gamma\bm x,\8}(t))^{\Gamma,0}\quad\text{and}\quad\h_\xsa^{*,\Gamma} :=  
\left(\h^*\ox \C_{0,\Gamma\bm x,\8}(t)\right)^{\Gamma,-1}.\nn\ee 
There is an embedding of commutative Lie algebras
\be \h_\xsa^{\Gamma} \longinto \h_{\8,p,0} \nn\ee
and we have the following, which is \cite[Proposition 4.3]{VY1} but now including the pole at $\8$.

\begin{lem}\label{prop:vh} The space of coinvariants
$\left.\C v_{\nu_\8} \ox \bigotimes_{i=1}^p \C v_{\nu_i}\ox \C v_{\nu_0}\right/ \h_\xsa^\Gamma$
is one-dimensional if and only if there exists a $\nu(t)\in \h_\xsa^{*,\Gamma}$ such that 
\be (\nu_\8; \nu_1,\dots,\nu_p; \nu_0) = (-\iota_{t^{-1}}; \iota_{t-x_1},\dots,\iota_{t-x_p}; \iota_t) \nu(t). \nn\ee
Otherwise it is zero-dimensional. \qed
\end{lem}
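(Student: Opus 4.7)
The plan is to reduce the claim to a direct application of the $\Gamma$-equivariant strong residue theorem, Lemma~\ref{lem:gsrt}, with the Lie algebra $\g$ there replaced by its Cartan subalgebra $\h$.

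The first step is to observe that each factor $\C v_{\nu_\8}$, $\C v_{\nu_i}$, $\C v_{\nu_0}$ is one-dimensional and that $\h_\xsa^\Gamma$ is commutative. Hence $\h_\xsa^\Gamma$ acts on the one-dimensional tensor product through a character $\lambda:\h_\xsa^\Gamma\to\C$, and the coinvariant quotient is one-dimensional precisely when $\lambda\equiv 0$ and zero-dimensional otherwise. The task is therefore to characterise the vanishing of $\lambda$.

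The second step is to make $\lambda$ explicit by reading off the scalar action on each tensor factor from the defining formulas $f.v_{\nu_0}=v_{\nu_0}\,\res_t\nu_0(f)$, $f.v_{\nu_i}=v_{\nu_i}\,\res_{t-x_i}\nu_i(f)$, and $f.v_{\nu_\8}=v_{\nu_\8}\,\res_{t^{-1}}(-t^2)\nu_\8(f)$, together with the embedding $\h_\xsa^\Gamma\into\h_{\8,p,0}$ via Laurent expansion (with the sign convention $h_\8=-\iota_{t^{-1}}h(t)$ at infinity). This presents $\lambda(h)$ as a sum of residues at the marked points, with factors $1/T$ at the fixed points $0$ and $\8$ reflecting the normalization that $\mathbf 1_0$ and $\mathbf 1_\8$ act as $1/T$ on the respective Wakimoto modules. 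The third step is then to invoke the $\Gamma$-equivariant strong residue theorem for the pairing $\h^\ast\times\h\to\C$: the proof of Lemma~\ref{lem:gsrt} transcribes verbatim to this abelian setting, since $\h$ is $\sigma$-stable and the natural pairing is $\sigma$-equivariant. Applied with $k=-1$, it asserts that $\lambda(h)=0$ for every $h\in\h_\xsa^\Gamma$ if and only if $(\nu_\8;\nu_1,\dots,\nu_p;\nu_0)$ is the Laurent expansion of some $\nu(t)\in\h_\xsa^{*,\Gamma}$, which is the claimed equivalence.

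The main obstacle lies in the bookkeeping of the second step: aligning the normalisation of the Heisenberg central charges, the sign and $\op$-conventions of the embedding at $\8$, and the implicit orbit-counting factors, so that the explicit formula for $\lambda(h)$ matches the left-hand side of the strong residue theorem condition. Once that alignment is in place, the first and third steps are essentially immediate.
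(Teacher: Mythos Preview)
Your approach is correct and is exactly what the paper has in mind (it states the lemma with a \qed, deferring to the prototype \cite[Proposition 4.3]{VY1}): reduce to the $\Gamma$-equivariant strong residue theorem, Lemma~\ref{lem:gsrt}, applied to the pairing $\h^*\times\h\to\C$ with $k=-1$.

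One small correction on the bookkeeping you flag. The $1/T$ weights at $0$ and $\8$ are not really a consequence of how $\mathbf 1_0,\mathbf 1_\8$ act: those central elements live in the Heisenberg summands and act on the Fock factors $\Mh^\Gamma_0$, $\Mh^{\Gamma,\vee}_\8$, not on the one-dimensional $\h$-modules $\C v_{\nu_0}$, $\C v^*_{\nu_\8}$ themselves. The $1/T$ in the residue theorem comes instead from the fact that each orbit $\Gamma x_i$ contributes $T$ equal residues while the fixed points contribute once. For your character $\lambda$ to match that sum, the $1/T$ must be built directly into the module actions $f.v_{\nu_0}$ and $f.v^*_{\nu_\8}$ at the fixed points (as it is in the cited prototype); without it, $\lambda$ would not vanish on global $\nu$ when $T>1$.
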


Define
\begin{multline} \Heis_{\8,p,0}^+ := (\n_\C \ox \C[[t^{-1}]])^{\Gamma,0} \oplus (\n^*_\C\ox \C[[t^{-1}]])^{\Gamma,-1} \\
{}\oplus 
\bigoplus_{i=1}^p (\n_\C\oplus \n^*_\C)\ox \C[[t-x_i]]\\
\oplus
 (\n_\C \ox \C[[t]])^{\Gamma,0} \oplus (\n^*_\C\ox \C[[t]])^{\Gamma,-1}\oplus \C\mathbf 1
\nn\end{multline}
Then 
\be \Heis_{\8,p,0} = \Heis_{\8,p,0}^+ + \Heis_\xsa^\Gamma \qquad 
  \text{and} \qquad \Heis_{\8,p,0}^+ \cap \Heis_\xsa^\Gamma  = \{0\}.\nn\ee
We may regard $M^{\Gamma,\vee}_{\8}$ as a left module over $U(\Heis_{\8}^{\Gamma,\op})$ and then
\be \left(\Mh^{\Gamma,\vee}_{\8} \ox \bigotimes_{i=1}^p \Mh_{x_i} \ox \Mh^{\Gamma}_{0}\right) = U(\Heis_{\8,p,0}) \ox_{U(\Heis_{\8,p,0}^+)} \C \lwac \ox \wac_{x_1} \ox \dots \ox \wac_{x_p} \ox \wac_0 \ee

Hence 
\be \left(\Mh^{\Gamma,\vee}_{\8} \ox \bigotimes_{i=1}^p \Mh_{x_i} \ox \Mh^{\Gamma}_{0}\right)
\big/ \Heis_\xsa^\Gamma \cong_\C \C \lwac \ox \wac_{x_1} \ox \dots \ox \wac_{x_p} \ox \wac_0 \cong_\C \C\nn.\ee

Therefore for any $\nu(t) \in \h_\xsa^{*,\Gamma}$ the space of coinvariants
\be \left(W^{\Gamma,\vee}_{-\iota_{t^{-1}}\nu(t)} \ox \bigotimes_{i=1}^p W_{\iota_{t-x_i} \nu(t)} \ox W^\Gamma_{\iota_t \nu(t)} \right)\big/ \left(\Heis_\xsa^\Gamma \oplus \h_\xsa^\Gamma\right)\ee
has dimension one. That means there is a unique $\left(\Heis_\xsa^\Gamma \oplus \h_\xsa^\Gamma\right)$-invariant linear functional, call it  $\tau_{\nu(t)}$, 
\be \tau_{\nu(t)} : 
 W^{\Gamma,\vee}_{-\iota_{t^{-1}}\nu(t)} \ox \bigotimes_{i=1}^p W_{\iota_{t-x_i} \nu(t)} \ox W^\Gamma_{\iota_t \nu(t)} \longrightarrow \C\nn\ee
normalised such that $\tau_{\nu(t)}(\lwac \ox \wac_{x_1} \ox \dots \ox \wac_{x_p} \ox \wac_0 )=1$.  

By functoriality, \emph{i.e.} \cite[Theorem 6.2]{VY2} and \cite[Corollary 6.6]{VY2} generalised to the situation of Appendix \ref{app: YWmap} for coinvariants of a tensor product of modules including one attached to infinity, the functional $\tau_{\nu(t)}$ is also invariant under $\g_\xsa^\Gamma$.

\subsection{Proof of Theorem \ref{mthm}} 
Let $c(1),\dots,c(m)\in I$, $\lambda_1,\dots,\lambda_N\in \h^*$, $\lambda_0\in \h^{*,\sigma}$ and $\chi$ be as in \S\ref{sec: mr}. Recall 
\be (x_1,\dots,x_p) = (z_1,\dots,z_N,w_1,\dots,w_m).\nn\ee 
We now fix
\be \nu(t) := \chi + \sum_{r\in \Z_T} \left(\sum_{i=1}^N\frac{\lsigma^r\lambda_i}{t-\omega^rz_i} - \sum_{j=1}^m \frac{\lsigma^r\alpha_{c(j)}}{t-\omega^rw_j}\right) + \frac{T\lambda_0+\Lambda_0}{t} \in
\h_\xsa^{*,\Gamma} ,\label{chitdef}\ee
where $\Lambda_0$ is as in \eqref{lambda0def}.

Let $\nu_j^{0}$ be the constant term in the Laurent expansion of $\nu(t)$ about $w_j$, for $j=1,\dots,m$. Note that the Bethe equations \eqref{tbe} are equivalent to the statement that
\be \la \nu_j^{0},\alpha_{c(j)} \ra = 0 ,\quad j=1,\dots,m. \ee 
Under this condition the vector $G_{c(j)}[-1]\wac_{w_j} \in W_{\res_{t-w_j}\nu(t)}$ is singular for $\gh$ (\ie $X[n] G_{c(j)}[-1]\wac_{w_j} = 0$ for all $n\geq 0$ and all $X\in \g$) \cite{FFR}. 
Henceforth, suppose that the Bethe equations are indeed satisfied. Then we get a linear functional invariant under the Lie subalgebra $\g_\zsa^\Gamma\subset \g_\xsa^\Gamma$,
\be \psi_{\nu(t)}  : W^{\Gamma,\vee}_{-\iota_{t^{-1}}\nu(t)} \ox \bigotimes_{i=1}^N W_{\iota_{t-z_i} \nu(t)} \ox W^\Gamma_{\iota_t \nu(t)}\to \C\label{psidef}\ee
defined by
\be \psi_{\nu(t)}(v_\8,v_1,\dots,v_N,v_0) = \tau_{\nu(t)}\big( v_\8,v_1,\dots,v_N,G_{c(1)}[-1]\wac_{w_1},\dots,G_{c(m)}[-1]\wac_{w_m}, v_0 \big).\nn\ee


Now we use the set-up of \S\ref{sec: indg} and specialize by setting
\begin{alignat}{3} \M_{z_i} &= M_{\lambda_i}^* && \cong \widetilde W_{\res_{t-z_i}\nu(t)} &&\subset W_{\iota_{t-z_i}\nu(t)} , \qquad i=1,\dots,N, \nn\\
 \M_0 &= M^{*,\sigma}_{\lambda_0} && \cong \widetilde W^\Gamma_{\res_t \nu(t)} &&\subset W^\Gamma_{\iota_t \nu(t)}, \nn\\
  \M_\8 &= \C_\chi &&\cong \C\lwac &&\subset W^{\Gamma,\vee}_{-\iota_{t^{-1}} \nu(t)}.\label{mmods}\end{alignat}
Here the isomorphisms are as in \eqref{wmi}, \eqref{wmig}, \eqref{wmigg}. 

Recall the homogeneous gradation of $\gh$ (namely $\deg X[n] = n$).
\begin{lem} Let $W$ be a graded $\gh$-module, $W= \bigoplus_{n\in \Z} W_n$. Then the subspace $W_{\geq 0}:= \bigoplus_{n=0}^\8 W_n$ is a module over $\g\ox \C[[t]]\oplus \C K \subset \gh$ and there is a canonical map of $\gh$-modules
\begin{equation*}
U(\gh) \ox_{U(\g\ox \C[[t]] \oplus \C K)} W_{\geq 0} \longrightarrow W; \quad x\ox v \longmapsto x\on v.
\end{equation*}
\qed
\end{lem}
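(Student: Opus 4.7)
The plan is straightforward and the lemma is essentially bookkeeping; I would split it into the two assertions.

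First, I would check that $W_{\geq 0}$ is stable under $\g\ox\C[[t]]\oplus \C K$. Regard this subalgebra of $\gh$ as spanned by the elements $X[n] = X\ox t^n$ with $X\in\g$, $n\geq 0$, together with the central element $K$. The hypothesis is that $W=\bigoplus_{n\in\Z} W_n$ is graded with respect to the homogeneous gradation of $\gh$, in which $\deg X[n] = n$ and $\deg K = 0$. Hence for any homogeneous $v\in W_k$ with $k\geq 0$ and any $n\geq 0$ one has $X[n]\on v \in W_{k+n}\subset W_{\geq 0}$, and $K\on v\in W_k\subset W_{\geq 0}$. So $W_{\geq 0}$ is preserved by every generator of $\g\ox\C[[t]]\oplus \C K$, giving it the claimed module structure by restriction.

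Second, I would produce the map of $\gh$-modules by Frobenius reciprocity. The inclusion $\iota:W_{\geq 0}\longinto W$ is by construction a morphism of $\big(\g\ox\C[[t]]\oplus \C K\big)$-modules. By the universal property of induction, $\iota$ extends uniquely to a $U(\gh)$-module homomorphism
\[
U(\gh)\ox_{U(\g\ox\C[[t]]\oplus \C K)} W_{\geq 0} \longrightarrow W, \qquad x\ox v \longmapsto x\on v.
\]
Equivalently, the bilinear map $(x,v)\mapsto x\on v$ is balanced over $U(\g\ox\C[[t]]\oplus \C K)$ by associativity of the $\gh$-action on $W$, so it descends to the tensor product.

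There is no substantive obstacle: the only point to keep in mind is that the \emph{grading hypothesis} on $W$ is exactly what is needed to guarantee closure of $W_{\geq 0}$ under non-negative modes, and once this is in hand the $\gh$-module map is built purely by the universal property of $\ox_{U(\g\ox\C[[t]]\oplus \C K)}$. (In applications where $\g\ox\C[[t]]$ is regarded as a topological Lie algebra, one would additionally note that smoothness of $W$ ensures that sums over positive modes converge in the obvious finite sense on each $W_k$; however, the lemma as stated needs only the purely algebraic argument above.)
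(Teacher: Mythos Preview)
Your proposal is correct and is exactly the natural argument; the paper itself does not spell out a proof at all (the lemma is stated with an immediate \qed), so there is nothing further to compare.
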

By virtue of this lemma there are canonical homomorphisms 
\begin{align} \MM^{-h^\vee}_{z_i} &\longrightarrow W_{\iota_{t-z_i}\nu(t)}, \quad i=1,\dots,N,\nn\\
         \MM^{-h^\vee/T}_0 &\longrightarrow  W^\Gamma_{\iota_t \nu(t)},\nn\end{align}
of, respectively  $\g_{z_i}$-modules, $i=1,\dots,N$ and of $\gh^\Gamma_{0}$-modules.
Similarly there is a canonical homomorphism
\be          \MM^{-h^\vee/T}_\8 \longrightarrow W^{\Gamma,\vee}_{-\iota_{t^{-1}} \nu(t)} \nn\ee
of $\gh^\Gamma_{\8}$-modules.
So in total we get a canonical map of $\gh_{0,p,\8}$-modules
\be \MM^{-h^\vee/T}_\8 \ox \bigotimes_{i=1}^N \MM^{-h^\vee}_{z_i} \ox \MM^{-h^\vee/T}_0 \longrightarrow W^{\Gamma,\vee}_{-\iota_{t^{-1}}\nu(t)} \ox \bigotimes_{i=1}^N W_{\iota_{t-z_i} \nu(t)} \ox W^\Gamma_{\iota_t \nu(t)}.\ee
In particular it is a map of $\g^\Gamma_\zsa\subset \g^\Gamma_\xsa \subset \gh_{\8,p,0}$-modules. 
Composing this map with the $\g^\Gamma_\zsa$-invariant functional $\psi_{\nu(t)}$ of \eqref{psidef} we obtain a $\g^\Gamma_\zsa$-invariant functional $\MM^{-h^\vee/T}_\8 \ox \bigotimes_{i=1}^N \MM^{-h^\vee}_i \ox \MM^{-h^\vee/T}_0 \to \C$. 
By Proposition \ref{prop:isom1} this is the same thing as a functional 
\be \C_\chi  \ox \bigotimes_{i=1}^N M_{\lambda_i}^*\ox M_{\lambda_0}^{*,\sigma}  \cong \bigotimes_{i=1}^N M_{\lambda_i}^*\ox M_{\lambda_0}^{*,\sigma} \longrightarrow \C.\label{lf}\ee 
(Indeed, the latter map is nothing but the restriction of $\psi_{\nu(t)}$ to the subspace $\C_\chi \ox \bigotimes_{i=1}^N M_{\lambda_i}^*\ox M_{\lambda_0}^{*,\sigma}\subset \MM^{-h^\vee/T}_\8 \ox \bigotimes_{i=1}^N \MM^{-h^\vee}_{z_i} \ox \MM^{-h^\vee/T}_0$.) 
Such functional is the same thing as a vector 
\be \psi_{\nu(t)} \in \bigotimes_{i=1}^N M_{\lambda_i}\ox M_{\lambda_0}^\sigma. \label{bv}\ee
The proof that this vector is an eigenstate of the Gaudin algebra with the given eigenvalue for the quadratic Hamiltonians proceeds as in \cite{VY1,FFT} following \cite{FFR}. 

Now let us show that this vector $\psi_{\nu(t)}$ of \eqref{bv} is given by the recursive definition \eqref{psidef}. (In particular, that it depends on $\chi$ only through the Bethe equations.)


Let $v_\lambda\in M_\lambda$ be a highest weight vector. By the pairing $M_\lambda \ox M^*_\lambda \to \C$ we can regard $v_\lambda$ as a linear map $M^*_\lambda \to \C$. 

For notational convenience let us reorder tensor factors in the argument of $\tau_{\nu(t)}$ in such a way that
\be \tau_{\nu(t)} : W^{\Gamma,\vee}_{-\iota_{t^{-1}}\nu(t)} \ox \bigotimes_{i=1}^N W_{\iota_{t-z_i} \nu(t)} \ox W^\Gamma_{\iota_t \nu(t)} \ox \bigotimes_{j=1}^m W_{\iota_{t-w_j} \nu(t)} \longrightarrow \C \ee

Let $v_\chi \ox \bm v \ox v_0\in  \widetilde W^{\Gamma,\vee}_{-\iota_{t^{-1}}\nu(t)} \ox \bigotimes_{i=1}^N \widetilde W_{\iota_{t-z_i} \nu(t)} \ox \widetilde W^\Gamma_{\iota_t \nu(t)}$.
\begin{lem} For any  $X_1,\dots,X_{s}\in \n_{\langle G\rangle}$, we have
\begin{align} 
&\tau_{\nu(t)}(v_\8,\bm v,v_0,X_{1}[-1] \wac_{w_1}, \dots,X_{s-1}[-1]\wac_{w_{s-1}}, X_s[-1]\wac_{w_s},\wac_{w_{s+1}},\dots,\wac_{w_m}) \nn\\
&\quad =  \sum_{i=1}^N\sum_{k\in \Z_T}\frac{\tau (v_\8,(\sigma^kX_{s})[0]_{z_i} \bm v , v_0, X_{1}[-1]\wac_{w_1},\dots,X_{s-1}[-1]\wac_{w_{s-1}},\wac_{w_s},\dots,\wac_{w_m})}{w_s-\omega^{-k}z_i} \nn\\
&\qquad + \frac{\tau (v_\8,\bm v, (T\Pi_0 X_{s})_{0}^W  v_0, X_{1}[-1]\wac_{w_1},\dots,X_{s-1}[-1]\wac_{w_{s-1}},\wac_{w_s},\dots,\wac_{w_m})}{w_s} \nn\\
&\qquad + \sum_{j=1}^{s-1} \sum_{k\in \Z_T} \frac{1}{w_s-\omega^{-k}w_j} 
  \tau_\Gamma(v,X_{1}[-1]\wac_{w_1},\dots,X_{j-1}[-1]\wac_{w_{j-1}}, 
[\sigma^k(X_{s}),X_{j}][-1]\wac_{w_j},\nn\\
&\qquad\qquad\qquad\qquad\qquad\qquad\qquad\qquad X_{j+1}[-1]\wac_{w_{j+1}},\dots,X_{s-1}[-1]\wac_{w_{s-1}},\wac_{w_s},\dots,\wac_{w_m}).\nn\end{align} 
\end{lem}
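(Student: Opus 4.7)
The identity is the cyclotomic analogue of the classical ``swapping'' move in the Bethe ansatz of \cite{FFR}, and is proved by applying the invariance of the functional $\tau_{\nu(t)}$ under a carefully chosen rational function taking values in the $G$-copy of $\n$. Specifically, I would introduce
\[
\xi_s(t) := \sum_{k \in \Z_T} \frac{\omega^{-k}\sigma^{-k}X_s}{t - \omega^{-k}w_s}
\,\in\, (\n_{\langle G\rangle}\ox \C_{\8,\Gamma\xs,0}(t))^{\Gamma,0}.
\]
A direct check on the recursion $\sigma A_k = \omega^{-1}A_{k-1}$ for $A_k := \omega^{-k}\sigma^{-k}X_s$ confirms that $\xi_s(\omega t)=\sigma\,\xi_s(t)$; by construction $\xi_s$ has simple poles exactly on the orbit $\Gamma w_s$, with residue $X_s$ at $w_s$, and is regular at every other marked point, including $0$ and $\8$. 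The $G$-side analogue of the $\g_\xsa^\Gamma$-invariance of $\tau_{\nu(t)}$ --- which holds by the same functoriality argument, now applied to the big Lie algebra $\U(\WW_0)^\Gamma$ in which the $G$-operators of \S\ref{Gsec} live and commute with the $E$-side generators --- then yields
\[
0 \;=\; \tau_{\nu(t)}\bigl(\xi_s\cdot V\bigr),
\]
where $V$ denotes the tensor argument on the left-hand side of the lemma with $\wac_{w_s}$ in the $s$-th slot.

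The bulk of the work is to decompose the action of $\xi_s$ on $V$ as a sum of contributions from each marked point, via the embedding $(-\iota_{t^{-1}};\iota_{t-x_1},\dots,\iota_{t-x_p};\iota_t)$ of $\g_\xsa^\Gamma$ into $\gh_{\8,p,0}$, and to match each contribution to a term of the lemma. The grading of $\tau_{\nu(t)}$ on the tensor product of Wakimoto modules forces only specific modes to contribute: at $w_s$, only the simple pole, producing $X_s[-1]\wac_{w_s}$; at each $z_i$, only the zero-mode $\xi_s(z_i)[0]_{z_i}$ on $\bm v \in \widetilde W_{\iota_{t-z_i}\nu(t)}$; at $0$, the zero-mode $\xi_s(0)=-T\Pi_0 X_s/w_s$; at each $w_j$ with $j<s$, the zero-mode $\xi_s(w_j)$ acting on $X_j[-1]\wac_{w_j}$; and at $\8$, the Laurent tail, which annihilates $\lwac \cong v_\chi$ by the co-Wakimoto structure of \eqref{wmigg} (the datum $\chi$ is $E$-side only and does not pair with $G$-type modes). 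After the reindexing $k \mapsto -k$ one computes $\xi_s(z_i)=-\sum_k \sigma^k X_s/(w_s-\omega^{-k}z_i)$ and similarly for $\xi_s(w_j)$; at each $w_j$, $j<s$, one moves the $G$-zero-mode past $X_j[-1]_{w_j}$ via
\[
[(\sigma^k X_s)[0]_{w_j},\,X_j[-1]_{w_j}]=[\sigma^k X_s,X_j][-1]_{w_j},
\]
and uses $G_\alpha[0]\wac_{w_j}=0$ --- which holds because under the isomorphism $\widetilde W_{\iota_{t-w_j}\nu(t)} \cong M^\ast_{\res \nu(t)|_{w_j}}$ the state $\wac_{w_j}$ corresponds to the highest-weight vector of the contragredient Verma, and $G_\alpha$ raises the weight by $\alpha$ --- leaving only the commutator term. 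Collecting all contributions in $0=\tau_{\nu(t)}(\xi_s\cdot V)$ and transposing the $w_s$-term to the opposite side yields the claimed identity, with the signs produced by the reindexing combined with the sign-flip from the transposition.

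The main obstacle I expect is twofold. First, rigorously establishing the $\n_{\langle G\rangle,\xsa}^\Gamma$-invariance of $\tau_{\nu(t)}$: this requires upgrading the functoriality results of \cite{VY2} to accommodate the $G$-copy of $\n$ inside the big Lie algebras $\U(\WW_0)_{x_i}$, $\U(\WW_0)_0^\Gamma$ and $\U(\WW_0)_\8^\Gamma$, taking care of the behaviour at the fixed point $\8$. Second, justifying that only the modes enumerated above contribute: this is a grading/weight argument showing that the higher-mode contributions lie in the kernel of $\tau_{\nu(t)}$ (or equivalently, that $\tau_{\nu(t)}$ pairs only states of matching total grade), combined with the explicit action of $G$-type zero-modes and of $(\n_{\langle G\rangle}\ox t^{-1}\C[[t^{-1}]])^{\Gamma,0}$-modes on $\lwac$ via the co-Wakimoto realization. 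Once these technical points are in hand, the remainder of the proof is the combinatorial bookkeeping indicated above.
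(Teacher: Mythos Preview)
Your proposal is correct and follows essentially the same approach as the paper. Your rational function $\xi_s(t)=\sum_k \omega^{-k}\sigma^{-k}X_s/(t-\omega^{-k}w_s)$ is, after the reindexing $k\mapsto -k$, exactly the paper's choice $\sum_k \sigma^k X_s/(\omega^{-k}t-w_s)$; the paper then expands the coinvariant equation $0=[f(t)\cdot V]$ point by point using the $Y_M$ and $Y_W$ maps and the same grading arguments you describe (vanishing at $\8$ because the $G$-generators involve only $a,a^*$ and not $b$, survival of only the $n=0$ mode at $0$ and at each $z_i$, and the commutator at each $w_j$ with $j<s$).
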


\begin{proof}
Let us write $\bm y = X_{1}[-1]\wac_{w_1} \ox \dots\ox X_{s-1}[-1]\wac_{w_{s-1}}\ox \wac_{w_s}\ox \dots\ox \wac_{w_m} \in\bigotimes_{j=1}^m W_{\iota_{t-w_j} \nu(t)}$ for brevity. We have
\be 
0= \left[\frac{\sigma^k X_s[-1]\wac}{\omega^{-k} t- w_s} \on (v_\8 \ox \bm v \ox v_0 \ox \bm y) \right].
\ee
Now 
\begin{multline}
\sum_{k\in \Z_T}\frac{\sigma^k X_s[-1]\wac}{\omega^{-k} t- w_s} \on (v_\8 \ox \bm v \ox v_0\ox \bm y)\\
=
- \res_{t^{-1}} t^2 \iota_{t^{-1}} \frac{1}{\omega^{-k}t-w_s} v_\8 Y_W(\sigma^k X_s[-1]\wac, t)\ox  \bm v \ox v_0\ox \bm y\\
+
v_\8 \ox \sum_{i=1}^N \res_{t-z_i} \iota_{t-z_i} \frac{1}{\omega^{-k}t-w_s} Y_M(\sigma^k X_s[-1]\wac, t-z_i)_{z_i} \bm v \ox v_0\ox \bm y
\\
+ 
v_\8 \ox \bm v \ox \res_{t} \iota_{t} \frac{1}{\omega^{-k}t-w_s} Y_W(\sigma^k X_s[-1]\wac, t) v_0\ox \bm y
\\
+
v_\8 \ox  \bm v \ox v_0 \ox \sum_{j=1}^{m} \res_{t-w_j} \iota_{t-w_j} \frac{1}{\omega^{-k}t-w_s} Y_M(\sigma^k X_s[-1]\wac, t-w_j)_{w_j} \bm y.
 \end{multline}
The $\res_{t-w_s}$ term here is  $v_\8 \ox  \bm v \ox v_0 \ox X_{1}[-1]\wac_{w_1}\ox \dots\ox X_{s-1}[-1]\wac_{w_{s-1}}\ox X_s[-1]\wac_{w_s}\ox \wac_{w_{s+1}}\ox \dots\ox \wac_{w_m}$. (The expansion of $1/(\omega^{-k} t - w_s)$ at $t=w_s$ has a regular part but it does not contribute because $(X_s[-1]\wac)_{[n]}\wac = X_s[n]\wac$ is zero for all $n\geq 0$.)   

Consider the first term on the right. Let $\widetilde W^{\Gamma,\vee}_\nu \subset W^{\Gamma,\vee}_\nu$ denote the subspace spanned by vectors of the form $\lwac a_{(0,\alpha_1)}[0] \dots a_{(0,\alpha_k)}[0]$, $k\in \Z_{\geq 0}$, $\alpha_1,\dots,\alpha_k\in \Delta_0^+$. We have
\be \res_{t^{-1}} t^2 \iota_{t^{-1}} \frac{1}{\omega^{-k}t-w_s} v_\8 Y_W(\sigma^k X_s[-1]\wac, t)
=
  \sum_{n\geq 0} w_s^n \omega^{k(n+1)} v_\8 (\sigma^k X_s[-1] \wac)^W_{(-n-1)} \label{8swap}\ee
and this vanishes for all $v_\8 \in \widetilde W^{\Gamma,\vee}_{-\iota_{t-1}\nu(t)}$, on $\Z$-grading grounds. (The free-field generators of $\n_{\langle G\rangle}$ involve $a,a^*$ but not $b$.) Consider the third term. We have
\be \res_{t} \iota_{t} \frac{1}{\omega^{-k}t-w_s} Y_W(\sigma^k X_s[-1]\wac, t) v_0
= - \sum_{n\geq 0} w_s^{-n-1} \omega^{-kn} (\sigma^k X_s[-1]\wac)^W_{(n)} v_0
\ee
and on grading grounds only the term $n=0$ in this sum contributes, since $v_0\in \widetilde W_{\iota_t \nu(t)}$. 
The remaining terms are similar.
\end{proof}

We have $\tau_{\nu(t)}(v_\8,\bm v, a^*_{(0,\alpha)}[0] v_0, \wac_{w_1},\dots,\wac_{w_m}) = 0$  for any weight $\alpha\in \Delta^+_0$. Indeed, this follows from the invariance of $\tau$ under the $\Gamma$-equivariant function
$E^*_{(0,\alpha)}\ox t^{-1} \in (\n^*_\C\ox \C_{0,\Gamma\bm x,\8}(t))^{\Gamma,-1}$. Similarly, we have
$\tau_{\nu(t)}(v_\8,a^*_\alpha[0]_{(z_i)} \bm v,v_0, \wac_{w_1},\dots,\wac_{w_m}) =0$,  $i=1,\dots,N$, for any root $\alpha\in \Delta^+$, as in \cite{VY1,FFR,ATY}.

It follows that, for any $v_\8\in\widetilde W^{\Gamma,\vee}_{-\iota_{t^{-1}}\nu(t)}\cong_\g M_{\lambda_\8}^{\sigma,\cai}$, the restriction of $\tau_{\nu(t)}(v_\8, \cdot,\dots,\cdot,\cdot, \wac_{w_1},\dots,\wac_{w_m})$ to  $\bigotimes_{i=1}^N \widetilde W_{\iota_{t-z_i} \nu(t)} \ox \widetilde W^\Gamma_{\iota_t \nu(t)}\cong_\g \bigotimes_{i=1}^N M_{\lambda_i}^*\ox M_{\lambda_0}^{*,\sigma}$ is proportional to the tensor product $v_{\lambda_1} \ox \dots v_{\lambda_N} \ox v_{\lambda_0}$ of the generating vectors $v_{\lambda_i}\in M_{\lambda_i}$, $i=1,\dots,N$, $v_{\lambda_0}\in M_{\lambda_0}^\sigma$. 
Now, it follows from \eqref{GEm} that $v_{\lambda_i}(E_\alpha \cdot) = -v_{\lambda_i}(G_\alpha \cdot)$. 
Because the two actions of $U(\n)$ commute, we therefore have 
\begin{multline} v_\lambda(G_{\alpha^{(1)}}G_{\alpha^{(2)}} \dots G_{\alpha^{(k)}} \cdot) = - v_\lambda(E_{\alpha^{(1)}}G_{\alpha^{(2)}} \dots G_{\alpha^{(k)}} \cdot)\\  
= -v_\lambda(G_{\alpha^{(2)}} \dots G_{\alpha^{(k)}} E_{\alpha^{(1)}} \cdot) = \dots = (-1)^k v_\lambda( E_{\alpha^{(k)}} \dots E_{\alpha^{(2)}} E_{\alpha^{(1)}} \cdot)\nn\end{multline} for any roots $\alpha^{(1)},\dots,\alpha^{(k)}\in \Delta^+$. 
Hence, by definition of the contragredient dual,
\be v_\lambda(G_{\alpha^{(1)}}G_{\alpha^{(2)}} \dots G_{\alpha^{(k)}} \cdot) 
  = (-1)^k(F_{\alpha^{(1)}} F_{\alpha^{(2)}} \dots F_{\alpha^{(k)}} v_\lambda)(\cdot).\ee

It follows from the definition of the two left commuting actions of $U(\n)$ on $M_{\lambda_i}^*$ that $v_{\lambda_i}(E_\alpha \cdot) = -v_{\lambda_i}(G_\alpha \cdot)$.\footnote{To see this consider, equivalently, the two commuting left actions of $U(\mf n_-)$ on the Verma module $M_{\lambda} \cong_\C U(\mf n_-)\ox_\C \C v_\lambda$. They are given by $X \on (n \ox v_\lambda) = Xn \ox v_\lambda$ and $X\triangleright (n\ox v_\lambda) = - nX \ox v_{\lambda}$, for $X\in \n_-$, $n\in U(\n_-)$. So in particular $X \on (1\ox v_\lambda) = - X\triangleright (1\ox v_\lambda)$.} Because these two actions commute, we therefore have 
\begin{multline} v_\lambda(G_{\alpha^{(1)}}G_{\alpha^{(2)}} \dots G_{\alpha^{(k)}} \cdot) = - v_\lambda(E_{\alpha^{(1)}}G_{\alpha^{(2)}} \dots G_{\alpha^{(k)}} \cdot)\\  
= -v_\lambda(G_{\alpha^{(2)}} \dots G_{\alpha^{(k)}} E_{\alpha^{(1)}} \cdot) = \dots = (-1)^k v_\lambda( E_{\alpha^{(k)}} \dots E_{\alpha^{(2)}} E_{\alpha^{(1)}} \cdot)\nn\end{multline} for any roots $\alpha^{(1)},\dots,\alpha^{(k)}\in \Delta^+$. 
Hence, by definition of the contragredient dual,
\be v_\lambda(G_{\alpha^{(1)}}G_{\alpha^{(2)}} \dots G_{\alpha^{(k)}} \cdot) 
  = (-1)^k(F_{\alpha^{(1)}} F_{\alpha^{(2)}} \dots F_{\alpha^{(k)}} v_\lambda)(\cdot).\ee
For the module at the origin the argument is the same, with $\n^\sigma$ in place of $\n$, by virtue of the following lemma.

\begin{lem}  $(E_{(0,\alpha)}[-1]\wac)^{W}_{(0)} = - (G_{(0,\alpha)}[-1]\wac)^{W}_{(0)} + \dots$ where $\dots$ are terms containing at least one factor $a^*_{(0,\gamma)}$, $\gamma\in \Delta^+_0$.
\end{lem}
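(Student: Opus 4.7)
The approach is to compare the $a^*$-free parts of the zero-mode operators $(E_{(0,\alpha)}[-1]\wac)^W_{(0)}$ and $(G_{(0,\alpha)}[-1]\wac)^W_{(0)}$ acting on the grade-zero subspace $\widetilde W^\Gamma_{\nu_0}$ of the twisted Wakimoto module, showing that they cancel up to a sign.

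First, by the very construction of the twisted Feigin-Frenkel realization -- specifically, running the argument in the proof of \cite[Proposition 4.4]{VY1} simultaneously for both commuting left actions of $U(\n^\sigma)$ from \S\ref{Gsec} (and recalling the computation indicated in the preceding footnote) -- both operators are first-order polynomial differential operators of the form
\begin{align*}
(E_{(0,\alpha)}[-1]\wac)^W_{(0)} &= \sum_{\beta\in \Delta^+_0} P_{(0,\alpha)}^{(0,\beta)}(T a^*_{(0,\bullet)}[0]) \, a_{(0,\beta)}[0], \\
(G_{(0,\alpha)}[-1]\wac)^W_{(0)} &= \sum_{\beta\in \Delta^+_0} R_{(0,\alpha)}^{(0,\beta)}(T a^*_{(0,\bullet)}[0]) \, a_{(0,\beta)}[0],
\end{align*}
for polynomials $P,R$, with \emph{no} pure multiplicative (``potential'') terms. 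All monomials involving some factor $a^*_{(0,\gamma)}[0]$ arise from the non-constant parts of these polynomials, so the $a^*$-free pieces are precisely $P^{(0,\beta)}_{(0,\alpha)}(0) \, a_{(0,\beta)}[0]$ and $R^{(0,\beta)}_{(0,\alpha)}(0) \, a_{(0,\beta)}[0]$.

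Second, it remains to establish $P^{(0,\beta)}_{(0,\alpha)}(0) = -R^{(0,\beta)}_{(0,\alpha)}(0)$. Under the identification $\widetilde W^\Gamma_{\nu_0} \cong M^{*,\sigma}_{(\res \nu_0 - \Lambda_0)/T}$ of \eqref{wmig}, sending $\wac$ to the highest-weight dual vector $v^*_{\lambda_0}$, these two operators implement the two commuting $U(\n^\sigma)$-actions $E_{(0,\alpha)}$ and $G_{(0,\alpha)}$. On the generator $1 \otimes v_{\lambda_0}$ of the Verma module, the two actions of \S\ref{Gsec} are related by $X\on(1\otimes v_{\lambda_0}) = -X\triangleright(1\otimes v_{\lambda_0})$, which dualizes to $E_{(0,\alpha)}v^*_{\lambda_0} = -G_{(0,\alpha)}v^*_{\lambda_0}$. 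Extracting coefficients of $a^*_{(0,\beta)}[0]\wac$ from the two expressions then yields $P^{(0,\beta)}_{(0,\alpha)}(0) = -R^{(0,\beta)}_{(0,\alpha)}(0)$, as required.

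The main technical obstacle is verifying the structural form claimed for the two operators, i.e.\ the absence of any pure polynomial in the $a^*_{(0,\gamma)}[0]$. This is essentially a conformal-weight and $\h^\sigma$-weight count: the generating states $E_{(0,\alpha)}[-1]\wac$ and $G_{(0,\alpha)}[-1]\wac$ have conformal weight $1$ and definite $\h^\sigma$-weight $\alpha$, and in the twisted free-field realization the only vertex operators of conformal weight $1$ contributing to the zero mode with the correct $\h^\sigma$-weight are those linear in the $a_{(0,\beta)}[0]$'s with polynomial coefficients in the $a^*$'s. The claim for $E_{(0,\alpha)}$ was obtained in \cite[Proposition 4.4]{VY1}; the argument for $G_{(0,\alpha)}$ is essentially identical upon replacing $E$ by $G$ throughout, using the commuting $G$-action of \S\ref{Gsec}.
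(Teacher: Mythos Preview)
Your overall plan matches the paper's: both arguments establish that the zero modes have the form $\sum_{\beta}P^{(0,\beta)}_{(0,\alpha)}(Ta^*_{(0,\bullet)}[0])\,a_{(0,\beta)}[0]$ and $\sum_{\beta}R^{(0,\beta)}_{(0,\alpha)}(Ta^*_{(0,\bullet)}[0])\,a_{(0,\beta)}[0]$ via the twisted free-field realization (cf.\ \cite[Proposition~4.4]{VY1} and the footnote in \S\ref{Gsec}), and then compare the $a^*$-free parts.

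However, your second step has a genuine gap. The relation you invoke, $E_{(0,\alpha)}v^*_{\lambda_0} = -G_{(0,\alpha)}v^*_{\lambda_0}$, is vacuous: $v^*_{\lambda_0}\in M^{*,\sigma}_{\lambda_0}$ is the highest-weight vector for \emph{both} commuting copies of $\n^\sigma$ (in the Wakimoto picture this is simply $a_{(0,\beta)}[0]\wac=0$), so both sides are zero and nothing about $P(0)$ versus $R(0)$ can be extracted from it. The correct dualization of \eqref{GEm} is a statement about the functional $v_{\lambda_0}\in M^\sigma_{\lambda_0}$, namely $v_{\lambda_0}(E_{(0,\alpha)}\,\cdot) = -v_{\lambda_0}(G_{(0,\alpha)}\,\cdot)$ as linear maps $M^{*,\sigma}_{\lambda_0}\to\C$; this is precisely what the paper records immediately \emph{after} the lemma. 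Testing that relation on $w=a^*_{(0,\beta)}[0]\wac$ would indeed recover $P^{(0,\beta)}_{(0,\alpha)}(0)=-R^{(0,\beta)}_{(0,\alpha)}(0)$, but that is not what you wrote. The paper's own proof of the lemma is more direct: since $P^{(0,\beta)}_{(0,\alpha)}$ and $R^{(0,\beta)}_{(0,\alpha)}$ have $\h^\sigma$-weight $\beta-\alpha$, the only constant term occurs at $\beta=\alpha$, where the finite-dimensional differential-operator realization gives the explicit scalars $P^{(0,\alpha)}_{(0,\alpha)}=-1$ and $R^{(0,\alpha)}_{(0,\alpha)}=+1$, whence the two operators equal $-a_{(0,\alpha)}[0]+\cdots$ and $+a_{(0,\alpha)}[0]+\cdots$ respectively.
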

\begin{proof}
We have the quasi-module map
\be Y_W(A,u) =: \sum_{n\in \Z} A^{W}_{(n)} u^{-n-1}.\nn\ee
Recall Lemma 5.4 from \cite{VY2}. Let $A$ be a state of definite degree $\deg A$. Then $R_{\omega^k} A := \omega^{k \deg A} \sigma^k A$. Hence
\be Y_W(A,u) = \frac 1 T \sum_{k=0}^{T-1} Y_W(R_{\omega^k} A, \omega^k u) 
 = \frac 1 T \sum_{k=0}^{T-1} \omega^{k\deg A} (\sigma^k A)^{W}_{(n)} (\omega^k u)^{-n-1}\ee
and thus
\be A^{W}_{(n)} = \frac 1 T \sum_{k=0}^{T-1} \left(\omega^{k (\deg A -n - 1)} \sigma^k A\right)^{W}_{(n)} = (\Pi_{n+1-\deg A} A)^{W}_{(n)} \ee

Now $G_{\alpha_i}[-1]\wac$ is a state of degree 1. Hence
\be (G_{\alpha_i}[-1]\wac)^{W}_{(0)} = ( \Pi_0 G_{\alpha_i}[-1] \wac)^{W}_{(0)} .\ee
That is, in terms of the adapted coordinates,
\be (G_{(j,\alpha)}[-1]\wac)^{W}_{(0)} = 0 \quad\text{for all}\,\, \alpha\in \Delta^+_j,  \text{ for every } j\in \Z_T\setminus\{ 0\}.\ee

Arguing as for $E_{(0,\alpha)}[-1]\wac$ in the proof of Proposition 4.4. of \cite{VY1}, we have
\be (G_{(0,\alpha)}[-1]\wac)^{W}_{(0)} = \sum_{\beta\in \Delta^+_0} R_{(0,\alpha)}^{(0,\beta)}(Ta^*_{(0,\bullet)}[0]) a_{(0,\beta)}[0] 
 =  a_{(0,\alpha)}[0] + \dots\ee
where $\dots$ are terms containing at least one factor $a^*_{(0,\gamma)}$, $\gamma\in \Delta^+_0$. At the same time, 
\be (E_{(0,\alpha)}[-1]\wac)^{W}_{(0)} = \sum_{\beta\in \Delta^+_0} P_{(0,\alpha)}^{(0,\beta)}(Ta^*_{(0,\bullet)}[0]) a_{(0,\beta)}[0] 
 =  -a_{(0,\alpha)}[0] + \dots\ee
so that $(E_{(0,\alpha)}[-1]\wac)^{W}_{(0)} = - (G_{(0,\alpha)}[-1]\wac)^{W}_{(0)} + \dots$.  
\end{proof}

This establishes that the weight function $\psi$ is indeed given recursively as in \eqref{psidef}.

\subsection{Proof of Theorem \ref{chi0thm}}
We keep the notations of the previous section. Let us now suppose that $\chi=0$. 
Then we have from \eqref{chitdef} that 
\begin{align} -\iota_{t^{-1}}\nu(t) &= 0+ \frac T t \left(\sum_{i=1}^N \Pi_0\lambda_i - \sum_{j=1}^m \Pi_0 \alpha_{c(j)}  + \lambda_0+\Lambda_0/T\right) + \mc O\left(\frac 1 {t^2}\right) \nn\\ 
&=
0+\frac T t \left(\lambda_\8 + \Lambda_0/T\right) + \mc O\left(\frac 1 {t^2}\right) \nn\end{align}
where $\lambda_\8$ is as in \eqref{l8def}. 

Recall that $\widetilde W^{\Gamma,\vee}_\nu \subset W^{\Gamma,\vee}_\nu$ denotes the subspace spanned by vectors of the form $\lwac a_{(0,\alpha_1)}[0] \dots a_{(0,\alpha_k)}[0]$, $k\in \Z_{\geq 0}$, $\alpha_1,\dots,\alpha_k\in \Delta_0^+$. This subspace has the structure of a right module over $U((\g\ox \C[[t^{-1}]])^\Gamma)$ on which $U((\g\ox t^{-1} \C[[t^{-1}]])^\Gamma)$ acts as zero. In particular it is a right module over $U(\g^\sigma)$.
Whenever, $t^2 \nu_\8$ has at most a simple pole in $t^{-1}$, as it does here, there is an isomorphism of right $U(\g^\sigma)$ modules
\be\widetilde W^{\Gamma,\vee}_{\nu_\8} \cong \Homres_\C(M^{*,\sigma}_{\frac1T(\res_{t^{-1}}(t^2\nu_\8)-\Lambda_0)},\C) \cong M^{\sigma,\cai}_{\frac1T(\res_{t^{-1}}(t^2\nu_\8)-\Lambda_0)},\label{wmig2}\ee
cf. \eqref{wmig}.

Now we modify the construction of \S\ref{sec: indg} as follows. Let $\M_\8$ be a module over $(\g^\op\otimes \C[[t^{-1}]])^{\Gamma}$ (rather than $(\g^\op\otimes t^{-1}\C[[t^{-1}]])^{\Gamma}$ as in \S\ref{sec: indg}). We make it into a module  $\M_\8^{k/T}$ over $(\g^\op\otimes\C[[t^{-1}]])^{\Gamma} \oplus \C K_\8$ by declaring that $K_\8$ acts by multiplication by $k/T\in \C$. We have the induced right module $U(\gh^{\Gamma}_{\8})$ module of level $k/T$,
\be
\MM_{\8}^{k/T} :=   \M_{\8}^{k/T}\otimes_{U((\g\otimes\C[[t^{-1}]])^\Gamma \oplus \C K_\8)} U(\gh^\Gamma_{\8}) .
\ee
The tensor product
\be \MMx := \MM^{k/T}_\8 \otimes \bigotimes_{i=1}^N \MM^k_{z_i} \otimes \MM^{k/T}_0 \label{MMxdef2}\ee
is again a module over $\gN$ on which $K$ acts as $k$. Pulling back by the embedding \eqref{iotamap}, we have that $\MMx$ becomes a module over $\gG$ and we can form the space of coinvariants $\MMx \big/ \gG$.
Let us write 
\be \gN^+:=(\g^\op \otimes \C[[t^{-1}]])^{\Gamma} \oplus \bigoplus_{i=1}^N \g\otimes\C[[t-z_i]] \oplus (\g \otimes \C[[t]])^{\Gamma}.\label{gNpdef2}\ee 
We have the natural inclusion $\g^\sigma \into \g$ and hence the embedding 
\be \g^\sigma\longinto 
\gN^+; \quad X\longmapsto (- X[0]_\8; X[0]_{z_1},\dots, X[0]_{z_N};X[0]_0) .\label{ed}\ee
Pulling back by this embedding, the tensor product 
\be \Mx := \M_\8 \otimes \bigotimes_{i=1}^N \M_{z_i} \otimes \M_0 \label{Mxdef2}\ee
is a module over $\g^\sigma$ and we have the space of coinvariants $\Mx\big/ \g^\sigma$.
\begin{prop}\label{prop:isom2} There is a canonical isomorphism of vector spaces
\be \MMx \big/ \gG \cong_\C \Mx\big/ \g^\sigma.\nn\ee
\end{prop}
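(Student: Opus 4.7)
The plan is to reduce the statement directly to Proposition~\ref{prop:isom1} by carefully tracking how the extra zero modes at $\8$ present in $\gN^+$ (compared to $\gNpp$) cause the ``positive part'' to intersect $\gG$ non-trivially. Concretely, I would first establish the vector-space decomposition
\[ \gN = \gN^+ + \gG, \qquad \gN^+ \cap \gG = \g^\sigma, \]
where $\g^\sigma$ sits inside $\gN$ as the diagonal copy \eqref{ed}. To see this, note that $\gN^+ = \gNpp \oplus \g^{\sigma,\op}[0]_\8$ as vector spaces (these zero modes are automatically $\sigma$-invariant by the equivariance), so $\gN = \gN^+ + \gG$ follows immediately from Proposition~\ref{prop:isom1}. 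For the intersection, an element of $\gG$ is a $\Gamma$-equivariant rational $\g$-valued function $f(t)$; requiring that its Laurent expansions at $0$, at each $z_i$, and at $\8$ all be regular forces $f$ to be a constant by Liouville, and the equivariance $f(\omega t) = \sigma f(t)$ then forces $f\in \g^\sigma$. Under the embedding \eqref{iotamap} such constants map precisely to the diagonal $\g^\sigma$ of \eqref{ed}, thanks in particular to the minus sign in the $\8$-component of the conventions.

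The second step is a standard PBW argument. Choosing any vector-space complement $\gG = \g^\sigma \oplus \gG'$ yields a direct-sum decomposition $\gN = \gN^+ \oplus \gG'$, from which PBW on the subspace $\gG'$ produces an isomorphism of $(U(\gN^+), U(\gG))$-bimodules
\[ U(\gN) \cong U(\gN^+) \otimes_{U(\g^\sigma)} U(\gG). \]
Substituting into \eqref{MMxdef2} and cancelling the inner tensor product over $U(\gN^+ \oplus \C K)$ gives an isomorphism of left $U(\gG)$-modules $\MMx \cong U(\gG) \otimes_{U(\g^\sigma)} \Mx$. Finally, the coinvariants functor for $\gG$ is $\C \otimes_{U(\gG)} -$ with $\C$ the trivial right $U(\gG)$-module, and a second cancellation delivers
\[ \MMx/\gG \;\cong\; \C \otimes_{U(\g^\sigma)} \Mx \;=\; \Mx/\g^\sigma, \]
which is exactly the claim.

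The main obstacle is the first step: verifying the vector-space decomposition and identifying the intersection as the diagonal copy of $\g^\sigma$. The Liouville / regularity argument itself is elementary, but one must be careful with opposite-algebra and sign conventions, in particular the minus sign at $\8$ in \eqref{ed}, in order to match the image of a constant $\g^\sigma$-valued rational function under \eqref{iotamap} with the diagonal embedding. The remaining manipulations are purely formal and follow the same pattern as the proof of Proposition~\ref{prop:isom1}. Conceptually, the proposition reflects the fact that in this enlarged setting the zero modes at $\8$ are no longer independent of the zero modes at $0$ and at the $z_i$, which is precisely why the quotient on the right-hand side is $\Mx/\g^\sigma$ rather than $\Mx$.
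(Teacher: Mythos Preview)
Your proposal is correct and follows essentially the same approach as the paper: establish $\gN = \gN^+ + \gG$ with $\gN^+ \cap \gG = \g^\sigma$, use the resulting PBW-type isomorphism $U(\gN) \cong U(\gG) \otimes_{U(\g^\sigma)} U(\gN^+)$ (the paper cites \cite[Proposition 2.2.9]{Dixmier} for this), and then cancel to obtain $\MMx \cong U(\gG) \otimes_{U(\g^\sigma)} \Mx$, whence $\MMx/\gG \cong \Mx/\g^\sigma$. The only cosmetic difference is that the paper packages the final cancellation as a separate elementary lemma (Lemma~\ref{lem: cancelling}), whereas you do it inline via $\C \otimes_{U(\gG)} -$.
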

\begin{proof}
We have
\be \MMx = U(\gN) \otimes_{U(\gN^+)} \Mx.\nn\ee 
Here we regard $\M_\8$ as a left module over $U(\g^{\sigma,\op})$.

There are natural embeddings of Lie algebras $\g^\sigma \into \gN^+$ and $\g^\sigma \into \gG$; the first is given in \eqref{ed}; for the second, $X\in \g^\sigma$ embeds as the constant function $X(t)=X$ in $\gG$. In turn, both  $\gN^+$ and $\gG$ embed into $\gN$ -- see \eqref{iotamap} --  and the following diagram of embeddings commutes
\be\begin{tikzpicture}    
\matrix (m) [matrix of math nodes, row sep=2em,    
column sep=4em, text height=1ex, text depth=1ex]    
{     
\g^\sigma & \gN^+  \\    
\gG & \gN . \\    
};    
\path[right hook->,font=\scriptsize,shorten <= 2mm,shorten >= 2mm]    
(m-1-1) edge node [above] {} (m-1-2)    
(m-2-1) edge node [above] {} (m-2-2);    
\path[right hook->,shorten <= 0mm,shorten >= 1mm]    
(m-1-1) edge node [left] {} (m-2-1)    
(m-1-2) edge node [right] {} (m-2-2);    
\end{tikzpicture}\nn\ee    
We may identify $\g^\sigma$, $\gN^+$ and $\gG$ with their images in $\gN$. Then
\be \gN = \gN^+ + \gG\quad\text{and}\quad \gN^+ \cap \gG = \g^\sigma.\nn\ee
Therefore \cite[Proposition 2.2.9]{Dixmier} there is an isomorphism
\be U(\gN) \cong U(\gG) \otimes_{U(\g^\sigma)} U(\gN^+)\nn\ee
of vector spaces and in fact of left $U(\gG)$-modules. Hence we have an isomorphism of left $U(\gG)$-modules,
\be
\MMx \cong 
  U(\gG) \otimes_{U(\g^\sigma)} U(\gN^+) \otimes_{U(\gN^+)} \Mx
 = U(\gG) \otimes_{U(\g^\sigma)} \Mx.\nn
\ee
The result follows by the following elementary lemma.
\end{proof}
\begin{lem}\label{lem: cancelling} Suppose $\mf b \into \mf a$ is an embedding of complex Lie algebras. Let $V$ be a $\mf b$-module and $M$ an $\mf a$-module. Then $((U(\mf a) \otimes_{U(\mf b)} V)\otimes M)\big/ \mf a \cong_\C (V\otimes M)\big/ \mf b$.

In particular (taking $M$ to be the trivial one-dimensional module), $(U(\mf a) \otimes_{U(\mf b)} V)\big/ \mf a \cong_\C V\big/ \mf b$.
\end{lem}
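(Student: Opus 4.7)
My plan is to exhibit the natural candidate map $\Phi$ and then establish bijectivity by a short Yoneda argument through Frobenius reciprocity, which avoids having to construct an inverse explicitly by hand.

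Define
\[\Phi\colon (V\otimes M)\big/\mf b \longrightarrow \bigl((U(\mf a)\otimes_{U(\mf b)} V)\otimes M\bigr)\big/\mf a,\qquad [v\otimes m]\longmapsto \bigl[(1\otimes_{U(\mf b)} v)\otimes m\bigr].\]
First I would verify $\Phi$ is well defined: for $b\in\mf b\subset \mf a$ the diagonal $\mf a$-action gives
\[b\cdot\bigl((1\otimes_{U(\mf b)} v)\otimes m\bigr) = (b\otimes_{U(\mf b)} v)\otimes m + (1\otimes_{U(\mf b)} v)\otimes bm = (1\otimes_{U(\mf b)} bv)\otimes m + (1\otimes_{U(\mf b)} v)\otimes bm,\]
which vanishes in the $\mf a$-coinvariants, so the map $V\otimes M \to \bigl((U(\mf a)\otimes_{U(\mf b)} V)\otimes M\bigr)/\mf a$ descends to $(V\otimes M)/\mf b$.

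For surjectivity I would run a PBW induction. Any class on the right is represented by $(x\otimes_{U(\mf b)} v)\otimes m$ with $x\in U(\mf a)$; applying the diagonal $\mf a$-action to a single generator $a\in \mf a$ yields, in $\mf a$-coinvariants, the identity
\[\bigl[(ax\otimes_{U(\mf b)} v)\otimes m\bigr] = -\bigl[(x\otimes_{U(\mf b)} v)\otimes am\bigr],\]
which decreases the PBW degree of $x$ by one. Iterating reduces to the case $x=1$, which is in the image of $\Phi$. For injectivity I would invoke Yoneda. For every vector space $W$, regarded as the trivial $\mf a$-module, there is a natural chain of isomorphisms
\begin{align*}
\Hom_\C\bigl(\bigl((U(\mf a)\otimes_{U(\mf b)} V)\otimes M\bigr)/\mf a,\,W\bigr)
&= \Hom_{\mf a}\bigl((U(\mf a)\otimes_{U(\mf b)} V)\otimes M,\,W\bigr)\\
&\cong \Hom_{\mf a}\bigl(U(\mf a)\otimes_{U(\mf b)} V,\,\Hom_\C(M,W)\bigr)\\
&\cong \Hom_{\mf b}\bigl(V,\,\Hom_\C(M,W)\bigr)\\
&\cong \Hom_{\mf b}(V\otimes M,\,W) = \Hom_\C\bigl((V\otimes M)/\mf b,\,W\bigr),
\end{align*}
where the middle two steps use respectively the tensor-Hom adjunction in $\mf a$-modules (with $\mf a$-action $(x\cdot f)(m):=-f(xm)$ on $\Hom_\C(M,W)$, forced by the triviality of $W$) and Frobenius reciprocity for $\mf b\into \mf a$. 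Tracing the units of these adjunctions identifies the composite with the dual of $\Phi$, so $\Phi$ is an isomorphism.

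The main obstacle I expect is the final identification: checking that the Yoneda chain of adjunctions really computes $\Phi^\vee$ and not some twist of it. This is a routine but slightly tedious diagram chase, requiring care with the diagonal $\mf b$-action on $V\otimes M$ and with the sign coming from the antipode of $U(\mf a)$ entering the internal Hom. An alternative that bypasses Yoneda is to fix a vector-space complement $\mf c$ of $\mf b$ in $\mf a$, so that PBW gives $U(\mf a)\otimes_{U(\mf b)} V\cong_\C S(\mf c)\otimes V$, and then construct $\Phi^{-1}$ directly by projecting onto the $1\in S(\mf c)$ component after absorbing $S(\mf c)_{>0}$ via the $\mf a$-action on $M$; the trade-off is that well-definedness modulo $\mf a$ then requires an explicit verification.
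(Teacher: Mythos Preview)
Your argument is correct. The map $\Phi$ and the surjectivity via PBW induction coincide with the paper's approach (the paper simply asserts surjectivity, but the PBW reduction you spell out is the implicit justification). Where you diverge is injectivity: the paper computes the kernel directly, identifying $V\otimes M$ with $(U(\mf b)\otimes_{U(\mf b)}V)\otimes M$ inside $(U(\mf a)\otimes_{U(\mf b)}V)\otimes M$ and claiming that its intersection with $\mf a\cdot\bigl((U(\mf a)\otimes_{U(\mf b)}V)\otimes M\bigr)$ is exactly $\mf b\cdot(V\otimes M)$. You instead use the Frobenius--tensor-Hom adjunction chain and Yoneda. Your route is cleaner conceptually and makes the role of induction transparent; the paper's route is shorter on the page but its key intersection claim tacitly relies on the same PBW decomposition you mention in your alternative at the end. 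The diagram chase you flag as the main obstacle (identifying the Yoneda isomorphism with $\Phi$) is indeed routine, and in fact once surjectivity is established it would even suffice to know merely that \emph{some} natural isomorphism exists, since a surjection between isomorphic vector spaces factoring the natural isomorphism must itself be an isomorphism.
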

\begin{proof}
Consider the linear map $V\otimes M \cong_\C \left( U(\mf b) \otimes_{U(\mf b)} V\right) \otimes M \into  \left( U(\mf a) \otimes_{U(\mf b)} V\right) \otimes M \onto \left(\left( U(\mf a) \otimes_{U(\mf b)} V\right) \otimes M\right)\big/\mf a$ sending $v\otimes m \mapsto [(1 \otimes v) \otimes m]$. 
This map is surjective.
We must show that it has kernel $\mf b \on ( V\otimes M)$. And indeed, the kernel is the intersection of $\left( U(\mf b) \otimes_{U(\mf b)} V\right) \otimes M$ with $\mf a \on \left( \left(U(\mf a) \otimes_{U(\mf b)} V\right)\otimes M\right)$, which is $\mf b \on \left( \left(U(\mf b) \otimes_{U(\mf b)} V\right)\otimes M\right) \cong_\C \mf b \on \left(V\otimes M\right)$. 
\end{proof}

We may now choose modules -- cf. \eqref{mmods} --
\begin{alignat}{2} \M_{z_i} &= M_{\lambda_i}^* && \cong \widetilde W_{\res_{t-z_i}\nu(t)}, \qquad i=1,\dots,N, \nn\\
 \M_0 &= M^{*,\sigma}_{\lambda_0} && \cong \widetilde W^\Gamma_{\res_t \nu(t)}, \nn\\
  \M_\8 &= M^{\sigma,\cai}_{\lambda_\8}
          &&\cong \widetilde W^{\Gamma,\vee}_{-\res_{t^{-1}\nu(t)}}, \end{alignat}
where the isomorphisms are now as in \eqref{wmi}, \eqref{wmig}, \eqref{wmig2}. 
So in total we again get a canonical map of $\gh_{\8,p,0}$-modules
$\MM^{-h^\vee/T}_\8 \ox \bigotimes_{i=1}^N \MM^{-h^\vee}_{z_i} \ox \MM^{-h^\vee/T}_0 \to W^{\Gamma,\vee}_{-\iota_{t^{-1}}\nu(t)} \ox \bigotimes_{i=1}^N W_{\iota_{t-z_i} \nu(t)} \ox W^\Gamma_{\iota_t \nu(t)}$ and hence, from the $\g^\Gamma_\zsa$-invariant functional $\psi_{\nu(t)}$ of \eqref{psidef}, we obtain a $\g^\Gamma_\zsa$-invariant functional $\MM^{-h^\vee/T}_\8 \ox \bigotimes_{i=1}^N \MM^{-h^\vee}_{z_i} \ox \MM^{-h^\vee/T}_0 \to \C$. 
By Proposition \ref{prop:isom2} this is the same thing as a $\g^\sigma$-invariant functional 
\be M^{\sigma,\cai}_{\lambda_\8} \ox \bigotimes_{i=1}^N M_{\lambda_i}^*\ox M_{\lambda_0}^{*,\sigma}  \to \C.\label{lf2}\ee 
Note, moreover that the restriction of this functional to the subspace $\C v^{\sigma,\cai}_{\lambda_\8} \ox \bigotimes_{i=1}^N M_{\lambda_i}^*\ox M_{\lambda_0}^{*,\sigma} \cong \bigotimes_{i=1}^N M_{\lambda_i}^*\ox M_{\lambda_0}^{*,\sigma}$ coincides up to non-zero rescaling with the linear functional of \eqref{lf}. Indeed, this follows from the fact that there are in both cases zero contribution from ``swapping to infinity'', cf. \eqref{8swap}. 

The result then follows by the following proposition.

\begin{prop}\label{prop: hom} Given $\lambda_\8,\lambda_0\in \h^{*,\sigma}$ and $\lambda_1,\dots,\lambda_N\in \h^*$, there is an isomorphism of vector spaces, and in fact of left $(U(\g)^{\ox N}\ox U(\g^\sigma))^{\g^\sigma}$-modules,
\be \left(\bigotimes_{i=1}^N M_{\lambda_i}\ox M_{\lambda_0}^\sigma\right)_{\lambda_\8}^{\n^\sigma} \cong 
\Hom_{\g^\sigma}\left( (M_{\lambda_\8}^{\sigma})^{\cai\circ S} \ox \bigotimes_{i=1}^NM_{\lambda_i}^*\ox M_{\lambda_0}^{*,\sigma}, \C\right)\ee
\end{prop}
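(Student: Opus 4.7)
The plan is to obtain the isomorphism as a chain of three natural identifications: the universal property of Verma modules, iterated Hom-tensor adjunction, and an automorphism trick to reconcile twists.

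First I would apply the standard Verma-Hom identity for $\g^\sigma$: for any weight $\g^\sigma$-module $V$ with finite-dimensional weight spaces,
\begin{equation*}
V^{\n^\sigma}_{\lambda_\8} \;\cong\; \Hom_{\g^\sigma}(M^\sigma_{\lambda_\8},V),
\end{equation*}
following from the universal property of $M^\sigma_{\lambda_\8}$ (generated by a vector of weight $\lambda_\8$ killed by $\n^\sigma$). Taking $V = \bigotimes_{i=1}^N M_{\lambda_i}\ox M_{\lambda_0}^\sigma$, where each $M_{\lambda_i}$ is restricted from $\g$ to $\g^\sigma$, rewrites the LHS of the proposition as a Hom space.

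Next I would iterate the Hom-tensor adjunction \eqref{sho}. For any Verma module $V$ over $\g$ or $\g^\sigma$, the Hopf dual $V^\vee := (V^*)^{\cai\circ S}$ satisfies
\begin{equation*}
\Hom_{\g^\sigma}(A, B\ox V) \;\cong\; \Hom_{\g^\sigma}(A\ox V^\vee, B),
\end{equation*}
valid because Verma modules have finite-dimensional weight spaces with weights bounded above, so their restricted duals behave as ordinary duals for adjunction purposes. Applying this successively to peel off $M^\sigma_{\lambda_0}$, then $M_{\lambda_N},\ldots,M_{\lambda_1}$ from the output of the Hom, I arrive at
\begin{equation*}
\Hom_{\g^\sigma}\!\Bigl(M^\sigma_{\lambda_\8} \ox \bigotimes_{i=1}^N (M_{\lambda_i}^*)^{\cai\circ S}\ox (M_{\lambda_0}^{*,\sigma})^{\cai\circ S},\; \C\Bigr).
\end{equation*}

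Finally I would twist the entire argument of the Hom by the involution $\cai\circ S$. Since $\cai$ and $S$ each square to the identity and commute, $\cai\circ S$ is an involutive automorphism of $\g^\sigma$; hence $\Hom_{\g^\sigma}(W,\C) \cong \Hom_{\g^\sigma}(W^{\cai\circ S},\C)$ for any $W$, because $\g^\sigma$-invariance of a functional is unchanged by composing the action with any automorphism. Using $((-)^{\cai\circ S})^{\cai\circ S} = \id$ on each tensor factor, this converts the Hopf duals back to the contragredient Vermas $M_{\lambda_i}^*$ and $M_{\lambda_0}^{*,\sigma}$ while transferring the twist onto $M^\sigma_{\lambda_\8}$, producing exactly the RHS. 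Naturality of each of the three isomorphisms with respect to the $(U(\g)^{\ox N}\ox U(\g^\sigma))^{\g^\sigma}$-action—which commutes with $\n^\sigma$ on the LHS and acts on the tensor argument of the Hom on the RHS—yields the module-theoretic strengthening.

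I expect the main obstacle to be bookkeeping rather than conceptual: keeping track of the two twists $\cai$ (already built into $M^*$) and $S$ (entering via the antipode adjunction), and checking that the adjunction and the twist-invariance of $\Hom(-,\C)$ really hold at the level of our restricted duals of tensor products of weight modules. Once these technicalities are verified the three steps compose cleanly, because each of them is a standard manipulation in the category of weight modules with suitably bounded weight decompositions.
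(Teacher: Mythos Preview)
Your proposal is correct and follows essentially the same route as the paper: Verma--Hom identification, then Hom--tensor adjunction (the paper moves the whole tensor product at once rather than iterating, but this is the same identity), then a global twist by the involution $\cai\circ S$ to shift the twist onto $M^\sigma_{\lambda_\8}$. The compatibility with the $(U(\g)^{\ox N}\ox U(\g^\sigma))^{\g^\sigma}$-action is likewise treated via naturality, just as you outline.
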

\begin{proof}
We have
\begin{align} \left(\bigotimes_{i=1}^N M_{\lambda_i}\ox M_{\lambda_0}^\sigma\right)_{\lambda_\8}^{\n^\sigma} &=
\Hom_{\h^\sigma\oplus\n^\sigma}\left(\C v_{\lambda_\8}, \bigotimes_{i=1}^N M_{\lambda_i}\ox M_{\lambda_0}^\sigma\right)\nn\\
&\cong 
\Hom_{\g^\sigma}\left(U(\g^\sigma) \ox_{U(\h^\sigma\oplus\n^\sigma)} \C v_{\lambda_\8}, \bigotimes_{i=1}^N M_{\lambda_i}\ox M_{\lambda_0}^\sigma\right)\nn\\
&=
\Hom_{\g^\sigma}\left( M^\sigma_{\lambda_\8}, \bigotimes_{i=1}^NM_{\lambda_i}\ox M_{\lambda_0}^\sigma\right).\end{align}
This is the space of those maps $\phi: M^\sigma_{\lambda_\8}\to \bigotimes_{i=1}^NM_{\lambda_i}\ox M_{\lambda_0}^\sigma$ such that 
\be \phi(x\on v) = (\Delta^{N+1} x)\on \phi(v),\quad v\in M^\sigma_{\lambda_\8}, x\in \g^\sigma\label{ec}.\ee It carries a natural left action of $(U(\g)^{\ox N}\ox U(\g^\sigma))^{\g^\sigma}$: given one such a map $\phi$, the map $v\mapsto X\phi(v)$ is another, for any $X\in (U(\g)^{\ox N}\ox U(\g^\sigma))^{\g^\sigma}$.

Then 
\be \Hom_{\g^\sigma}\left( M^\sigma_{\lambda_\8}, \bigotimes_{i=1}^NM_{\lambda_i}\ox M_{\lambda_0}^\sigma\right)
\cong
\Hom_{\g^\sigma}\left( M_{\lambda_\8} \ox \left(\bigotimes_{i=1}^NM_{\lambda_i}\ox M_{\lambda_0}^{\sigma}\right)^{\vee,S}, \C\right)
\ee
where for a finitely-weighted left $U(\g^\sigma)$-module $M$ we denote by $M^{\vee,S}$ its restricted dual made into a left $U(\g^\sigma)$-module by twisting by $S$. Indeed, given a map $\phi$ such that \eqref{ec} holds we define a map $\Phi: M_{\lambda_\8} \ox  \left(\bigotimes_{i=1}^NM_{\lambda_i}\ox M_{\lambda_0}^{\sigma}\right)^{\vee,S}\to \C$ by $\Phi(v\ox \mu) = \mu(\phi(v))$, $v\in M_{\lambda_\8}$, $\mu\in \left(\bigotimes_{i=1}^NM_{\lambda_i}\ox M_{\lambda_0}^{\sigma}\right)^{\vee,S}$. It obeys $\Phi( x \on (v\ox \mu)) = 0$ where $x\on (v\ox \mu) := (x\on v)\ox \mu + x \ox (\mu \circ (-\Delta^Nx\on))$, for $x\in \g^\sigma$.

Finally, we may twist the action of $\g^\sigma$ on $M_{\lambda_\8} \ox \left(\bigotimes_{i=1}^NM_{\lambda_i}\ox M_{\lambda_0}^{\sigma}\right)^{\vee,S}$ by the automorphism $S\circ \cai=\cai\circ S$. Since $S^2=\id$ here,  $(M_\lambda^{\vee,S})^{S\circ \cai} = M_\lambda^{\vee,\cai}=: M_\lambda^*$. Thus
\be
\Hom_{\g^\sigma}\left( M_{\lambda_\8}^{\sigma} \ox \left(\bigotimes_{i=1}^NM_{\lambda_i}\ox M_{\lambda_0}^{\sigma}\right)^{\vee,S}, \C\right)
\cong
\Hom_{\g^\sigma}\left( (M_{\lambda_\8}^{\sigma})^{\cai\circ S} \ox \bigotimes_{i=1}^NM_{\lambda_i}^*\ox M_{\lambda_0}^{*,\sigma}, \C\right)
\ee
and this is what we mean by $\Hom_\C\left( \left(M_{\lambda_\8}^{\sigma,\cai} \ox \bigotimes_{i=1}^N M_{\lambda_i}^*\ox M_{\lambda_0}^{*,\sigma}\right)\big/ \g^\sigma, \C\right)$. (Recall that we embed $\g^\sigma \into \g^{\sigma,\op} \oplus \bigoplus_{i=1}^N \g^\sigma$, $X\mapsto (-X,X,\dots,X)$ in the definition of the coinvariants here.)
\end{proof}

\appendix

\section{Proof of Proposition \ref{qHprop}}\label{qHapp}
Let us write 
\be \dell n x := \frac{1}{n!} \left(\frac{\del}{\del x}\right)^n \nn\ee
Consider the $\Gamma$-equivariant rational function 
\be \label{swap function}
f(t) = \sum_{k = 0}^{T-1} \frac{\sigma^k A}{(\omega^{-k} t - u)^{p+1}} = \dell pu \sum_{k = 0}^{T-1}\frac{\sigma^k A}{\omega^{-k} t - u}  \in \gGu.
\ee
The expansion of \eqref{swap function} at $u$ is
\be \iota_{t-u}f(t) = A[-p-1]_u - \dell p u \sum_{k=1}^{T-1} \sum_{n=0}^{\infty} \frac{\omega^{k (p+1)}}{(\omega^k - 1)^{n+p+1} u^{n+1}} (\sigma^k A)[n]_u \in \g\otimes\C((t-u)). \ee
Its expansions at $z_i$, $i=1,\dots,N$, and at $0$ are given by
\be \iota_{t-z_i}f(t) = - \dell p u \sum_{k=0}^{T-1} \sum_{n=0}^{\infty} \frac{\omega^{-kn}}{(u - \omega^{-k}z_i)^{n+1}} (\sigma^k A)[n]_{z_i} \in \g\otimes\C[[t-z_i]], \ee
\be \iota_{t}f(t) = - \dell p u \sum_{n=0}^{\infty} \frac{1}{u^{n+1}} (\Pi_n A)[n]_0 \in \g\otimes\C[[t]], \ee
where, recall, $A[n]_{z_i} = A \otimes  (t - z_i)^n \in \g\otimes\C((t-z_i))$. 
It is regular at $\8$ and its expansion there is given by
\be - \iota_{t^{-1}}f(t) =
 - \dell pu \sum_{k=0}^{T} \sum_{n=0}^\8 (\omega^k u)^n \omega^k \sigma^k A[-n-1]_\8
= -\dell pu \sum_{n=0}^\8 u^n (\Pi_{-n-1} A)[-n-1]_\8, \ee 
where recall that for the modes at infinity we write $A[n]_\8 = A \ox t^n$ in our conventions. 

Thus we have in particular
\begin{multline} \label{quad swap}
\big[ S(u) \cdot x \otimes \vac \big] :=
\big[ x \otimes \half I^a[-1] I_a[-1] \vac \big] = \left[ I^a(u) \cdot x \otimes \ha I_a[-1] \vac \right]\\
+ \sum_{p=1}^{T-1} \frac{\omega^p}{(\omega^p - 1) u} \big[ x \otimes \ha (\sigma^p I^a)[0] I_a[-1] \vac \big]
+ \sum_{p=1}^{T-1} \frac{\omega^p}{(\omega^p - 1)^2 u^2} \big[ x \otimes \ha (\sigma^p I^a)[1] I_a[-1] \vac \big]\\
\end{multline}
where, putting the expansions at $0, z_1,\dots,z_N,\8$ together, we shall write, for any $A\in \g$,
\be A(u) := \left(  \sum_{i=1}^N \sum_{n=0}^\8 \sum_{k=0}^{T-1} \frac{\omega^{-kn}(\sigma^k A)[n]_{z_i} }{(u - \omega^{-k}z_i)^{n+1}} + T \sum_{n=0}^{\infty} \frac{ (\Pi_n A)[n]_0}{u^{n+1}} + T \sum_{n=0}^\8 u^n (\Pi_{-n-1} A)[-n-1]_\8 \right).\ee
Then, defining the element $F\in \g^\sigma$ and number $K$ as in \eqref{FKdef}, 
we see after one further ``swapping'' that
\be S(u) = \half I_a(u) I^a(u) + \frac 1u F(u) + \frac 1 {u^2} K.\label{Surat}\ee
Consider first the terms in $\half I_a(u) I^a(u)$ acting at sites $i,j\in \{1,\dots,N\}$, $i\neq j$. Let us write 
\be z_{ik} := \omega^{-k} z_i\quad\text{and}\quad \dell{n}{ik} = \frac{1}{n!} \left(\frac{\del}{\del z_{ik}}\right)^n.\ee
Then $\frac 1 {(u-z_{ik})^{n+1}} = \dell n{ik} \frac1 {u-z_{ik}}$ and we get the terms
\begin{multline} \half \sum_{n,m=0}^\8 \sum_{\substack{i,j=1\\j\neq i}}^N \sum_{k,l=0}^{T-1} 
\dell n {ik} \dell m {jl} \frac 1 {u-z_{ik}} \frac 1 {u-z_{jl}} \omega^{-kn-lm} (\sigma^k I_a)[n]_{z_i} (\sigma^l I^a)[m]_{z_j} \\
 = \half \sum_{n,m=0}^\8 \sum_{\substack{i,j=1\\j\neq i}}^N \sum_{k,l=0}^{T-1} 
\dell n {ik} \dell m {jl} \left( \frac 1 {u-z_{ik}} \frac 1 {z_{ik}-z_{jl}} + \frac 1 {u-z_{jl}} \frac 1 {z_{jl}-z_{ik}} \right) \omega^{-kn-lm} (\sigma^k I_a)[n]_{z_i} (\sigma^l I^a)[m]_{z_j}\\
= \half \sum_{n,m=0}^\8 \sum_{\substack{i,j=1\\j\neq i}}^N \sum_{k,l=0}^{T-1} 
\dell n {ik} \dell m {jl} \frac 1 {u-z_{ik}} \frac 1 {z_{ik}-z_{jl}}  \omega^{-kn-lm} \left\{ (\sigma^k I_a)[n]_{z_i} , (\sigma^l I^a)[m]_{z_j} \right\}\\
= \sum_{n,m=0}^\8 \sum_{\substack{i,j=1\\j\neq i}}^N \sum_{k,l=0}^{T-1} 
\dell n {ik} \dell m {jl} \frac 1 {u-z_{ik}} \frac 1 {z_{ik}-z_{jl}}  \omega^{-kn-lm} (\sigma^k I_a)[n]_{z_i} (\sigma^l I^a)[m]_{z_j} 
\end{multline}
where $\{a,b\} := ab+ba$ denotes the anti-commutator. Now here
\begin{multline} \dell n {ik} \dell m {jl} \frac 1 {u-z_{ik}} \frac 1 {z_{ik}-z_{jl}} 
 = \sum_{p=0}^n \left( \dell p {ik} \frac 1 {u-z_{ik}} \right)
    \left( \dell {n-p} {ik} \dell m {jl} \frac 1{z_{ik} - z_{jl}} \right) \\
= \sum_{p=0}^n \frac 1 {(u-z_{ik})^{p+1}} (-1)^{n-p} \binom {n+m-p} m  \frac 1{(z_{ik} - z_{jl})^{n-p+m+1}}.
\end{multline}
Thus we find the following terms in  $\half I_a(u) I^a(u)$:
\begin{multline}   
\sum_{n,m=0}^\8 \sum_{\substack{i,j=1\\j\neq i}}^N \sum_{k,l=0}^{T-1} \sum_{p=0}^n 
\frac{  \omega^{-kn-lm}}{(u-z_{ik})^{p+1}} (-1)^{n-p} \binom{n+m-p}{m}  \frac 1{(z_{ik} - z_{jl})^{n-p+m+1} }(\sigma^k I_a)[n]_{z_i} (\sigma^l I^a)[m]_{z_j} \\
=
\sum_{i=1}^N \sum_{k=0}^{T-1} \sum_{p=0}^\8 \frac 1 {(u-\omega^{-k} z_i)^{p+1}}
\left( \sum_{\substack{j=1\\j\neq i}}^N \sum_{l=0}^{T-1} \sum_{r,m = 0}^\8  \frac{\omega^{ - k(r+p) - lm} (-1)^r \binom{r+m} m}{(\omega^{-k} z_i - \omega^{-l} z_j)^{r+m+1}} 
(\sigma^k I_a)[r+p]_{z_i} (\sigma^l I^a)[m]_{z_j}\right)\\
=
\sum_{i=1}^N \sum_{k=0}^{T-1} \sum_{p=0}^\8 \frac{\omega^{-kp + k }\sigma^k} {(u-\omega^{-k} z_i)^{p+1}}
\left( \sum_{\substack{j=1\\j\neq i}}^N \sum_{l=0}^{T-1} \sum_{r,m = 0}^\8  \frac{\omega^{(k-l)m} (-1)^r \binom{r+m} m}{(z_i - \omega^{k-l} z_j)^{r+m+1}} 
I_a[r+p]_{z_i} (\sigma^{l-k} I^a)[m]_{z_j}\right)\\
=
\sum_{i=1}^N \sum_{k=0}^{T-1} \sum_{p=0}^\8 \frac{\omega^{-kp + k }\sigma^k} {(u-\omega^{-k} z_i)^{p+1}}
\left( \sum_{\substack{j=1\\j\neq i}}^N \sum_{l=0}^{T-1} \sum_{r,m = 0}^\8  \frac{\omega^{-lm} (-1)^r \binom{r+m} m}{(z_i - \omega^{-l} z_j)^{r+m+1}} 
I_a[r+p]_{z_i} (\sigma^{l} I^a)[m]_{z_j}\right)
\end{multline}
In part by a similar calculation we find the terms in  $\half I_a(u) I^a(u)$ acting solely at one site $i\in \{1,\dots, N\}$, namely
\begin{multline}
\sum_{i=1}^N \sum_{k=0}^{T-1} \sum_{p=0}^\8 \frac{\omega^{-kp + k }\sigma^k} {(u-\omega^{-k} z_i)^{p+1}} \times \\
\left( \sum_{l=1}^{T-1} \sum_{r,m = 0}^\8  \frac{\omega^{-lm} (-1)^r \binom{r+m} m}{((1  - \omega^{-l}) z_i)^{r+m+1}} 
\half \left\{I_a[r+p]_{z_i}, (\sigma^{l} I^a)[m]_{z_i}\right\} + \sum_{n=0}^{p-1} \half I_a[n]_{z_i} I^a[p-n-1]_{z_i}\right),
\end{multline}
where the second term is an ``on-diagonal'' term in $\half I_a(u) I^a(u)$.

The cross terms in $\half I_a(u) I^a(u)$ acting at sites $i\in \{1,\dots,N\}$ and $0$ are given by (here we use an obvious trick to make the calculation resemble the one above)
\begin{multline} T \sum_{n,m=0}^\8 \sum_{i=1}^N \sum_{k=0}^{T-1} 
\left.\dell n {ik} \dell m {z} \frac 1 {u-z_{ik}} \frac 1 {u-z} \right|_{z=0}\omega^{-kn} (\sigma^k I_a)[n]_{z_i} (\Pi_m I^a)[m]_0 \\
 = T\sum_{n,m=0}^\8 \sum_{i=1}^N \sum_{k=0}^{T-1} 
\left.\dell n {ik} \dell m {z} \left( \frac 1 {u-z_{ik}} \frac 1 {z_{ik}-z} + \frac 1 {u-z} \frac 1 {z-z_{ik}} \right)\right|_{z=0} \omega^{-kn} (\sigma^k I_a)[n]_{z_i} (\Pi_m I^a)[m]_0\\
= T\sum_{n,m=0}^\8 \sum_{i=1}^N \sum_{k=0}^{T-1} 
\left. 
\sum_{p=0}^n \frac 1 {(u-z_{ik})^{p+1}}  \frac {(-1)^{n-p} \binom {n+m-p} m }{(z_{ik} - z)^{n-p+m+1}}
\right|_{z=0} \omega^{-kn} (\sigma^k I_a)[n]_{z_i} (\Pi_m I^a)[m]_0\\
+ T\sum_{n,m=0}^\8 \sum_{i=1}^N \sum_{k=0}^{T-1} 
\left.
\sum_{p=0}^n \frac 1 {(u-z)^{p+1}}  \frac{(-1)^{m-p} \binom {n+m-p} m }{(z-z_{ik})^{n-p+m+1}}
\right|_{z=0} \omega^{-kn} (\sigma^k I_a)[n]_{z_i} (\Pi_m I^a)[m]_0\\
= T\sum_{n,m=0}^\8 \sum_{i=1}^N \sum_{k=0}^{T-1}  
\sum_{p=0}^n \frac 1 {(u-\omega^{-k}z_{i})^{p+1}}   \frac {(-1)^{n-p} \binom {n+m-p} m}{z_{i}^{n-p+m+1}}
 \omega^{k(-p+m+1)} (\sigma^k I_a)[n]_{z_i} (\Pi_m I^a)[m]_0\\
+ T\sum_{n,m=0}^\8 \sum_{i=1}^N \sum_{k=0}^{T-1} 
\sum_{p=0}^n \frac 1 {u^{p+1}}  \frac{(-1)^{n+1} \binom {n+m-p} m }{z_{i}^{n-p+m+1}}
\omega^{k(-p+m+1)} (\sigma^k I_a)[n]_{z_i} (\Pi_m I^a)[m]_0\\
\end{multline}
Next, the cross terms in $\half I_a(u) I^a(u)$ acting at sites $i\in \{1,\dots,N\}$ and $\8$ are
\begin{multline} 
T \sum_{k=0}^{T-1} \sum_{n,m = 0}^\8 \sum_{i=1}^N \frac{\omega^{-kn}}{(u-\omega^{-k} z_i)^{n+1}} u^m (\Pi_{-m-1} I_a)[-m-1]_\8 (\sigma^k I^a)[n]_{z_i}\\
= T \sum_{k=0}^{T-1} \sum_{n,m = 0}^\8 \sum_{i=1}^N 
\left( \sum_{p=0}^{m-n-1} \binom {n+p} n u^{m-n-1-p} (\omega^{-k}z_i)^p  
   + \sum_{p=0}^n \binom mp \frac{(\omega^{-k}z_i)^{m-p}}{(u-\omega^{-k}z_i)^{n-p+1}}\right) \times \\ \omega^{-kn}(\Pi_{-m-1} I_a)[-m-1]_\8 (\sigma^k I^a)[n]_{z_i} \label{8i}
\end{multline}
where we used the identity
\be \frac{u^m}{(u-z)^{n+1}} = \sum_{p=0}^{m-n-1} \binom {n+p} n u^{m-n-1-p} z^p + \sum_{p=0}^n \binom mp \frac{z^{m-p}}{(u-z)^{n-p+1}}.\nn\ee 
The pole term in $(u-\omega^{-k} z_i)$ in \eqref{8i} is (here $r=n-p$, $n=r+p$)
\begin{multline}
T \sum_{i=1}^N \sum_{k=0}^{T-1} \sum_{r=0}^\8 \frac 1{(u- \omega^{-k} z_i)^{r+1}} \sum_{p,m=0}^\8 \omega^{-kr-km} z_i^{m-p} \binom m p  (\Pi_{-m-1} I_a)[-m-1]_\8 (\sigma^k I^a)[r+p]_{z_i} \\
= T \sum_{i=1}^N \sum_{k=0}^{T-1} \sum_{r=0}^\8 \frac {\omega^{-kr+k}\sigma^k}{(u- \omega^{-k} z_i)^{r+1}}\left( 
\sum_{p,m=0}^\8  z_i^{m-p} \binom m p  (\Pi_{-m-1} I_a)[-m-1]_\8 I^a[r+p]_{z_i} \right)
\\
= T \sum_{i=1}^N \sum_{k=0}^{T-1} \sum_{r=0}^\8 \frac {\omega^{-kr+k}\sigma^k}{(u- \omega^{-k} z_i)^{r+1}}\left( 
\sum_{n,p=0}^\8  z_i^{n} \binom {n+p} p  (\Pi_{-n-p-1} I_a)[-n-p-1]_\8 I^a[r+p]_{z_i} \right)
\end{multline}
where we use the fact that $\sigma^k \Pi_{-m-1} = \omega^{-km-k} \Pi_{-m-1}$.
The power of $u$ in \eqref{8i} is (here $r=m-n-1-p$, $p=m-n-1-r$)
\be T \sum_{r=0}^\8 u^r \sum_{i=1}^N \sum_{k=0}^{T-1} \sum_{m,n=0}^\8 
\omega^{-km+k+kr} z_i^{m-n-1-r} \binom {m-1-r} n 
(\Pi_{-m-1} I_a)[-m-1]_\8 (\sigma^k I^a)[n]_{z_i} . 
\ee

The cross terms in $\half I_a(u) I^a(u)$ acting at sites $0$ and $\8$ can be conveniently written as
\begin{multline}
  T^2 \sum_{r=0}^\8 \frac 1{u^{r+1}} \sum_{n=0}^r (\Pi_n I_a)[n]^{(0)} (\Pi_{r-n+1} I^a)[-n+r-1]_\8 \\
+ T^2 \sum_{r=0}^\8 u^r  \sum_{n=0}^\8 (\Pi_n I_a)[n]_0 (\Pi_{-r-n-2} I^a)[-n-r-2]_\8. 
\end{multline}

Finally the diagonal terms in  $\half I_a(u) I^a(u)$ acting at sites $0$ and $\8$ respectively are
\be \frac{T^2}{2} \sum_{r=1}^\8 \frac 1{u^{r+1}} \sum_{n=0}^{r-1} (\Pi_n I_a)[n]_0 (\Pi_{r-n-1} I_a)[r-n-1]_0
\ee
and
\be \frac{T^2}{2} \sum_{r=0}^\8 u^r \sum_{n=0}^r (\Pi_{-n-1} I_a)[-n-1]_\8 (\Pi_{-r+n-1} I_a)[-r+n-1]_\8.
\ee

Next we turn to the term $F(u)/u$ in \eqref{Surat}. Noting that
\be \frac 1 u \frac 1 {(u-z)^{n+1}} = \sum_{p=0}^n (-1)^p \frac{1}{z^{p+1}} \frac{1}{(u-z)^{n-p+1}} - \frac 1 u \frac {(-1)^n}{z^{n+1}}.\nn\ee
we find that in $F(u)/u$ the terms acting at the sites $i\in \{1,\dots,N\}$ are
\begin{multline} \sum_{i=1}^N\sum_{k=0}^{T-1} \sum_{n=0}^\8 \left( \sum_{p=0}^n (-1)^p \frac{\omega^{-kn}}{ (\omega^{-k} z_i)^{p+1} (u-\omega^{-k} z_i)^{n-p+1}}
- \frac 1 u  \frac{\omega^{-kn} (-1)^n}{(\omega^{-k} z_i)^{n+1}} \right)\sigma^k F[n]_{z_i} \\
=
\sum_{i=1}^N\sum_{k=0}^{T-1} \sum_{r=0}^\8 \sum_{n=r}^\8  (-1)^{n-r} \frac{\omega^{-kn+k(n-r+1)}}{(u-\omega^{-k} z_i)^{r+1}}
\frac{1}{z_i^{n-r+1}} \sigma^k F[n]_{z_i}
 -  \frac 1 u \sum_{i=1}^N \sum_{n=0}^\8  \frac {(-1)^n}{z_i^{n+1}} \sum_{k=0}^{T-1}\omega^{k} \sigma^k F[n]_{z_i}
\\
=
\sum_{i=1}^N\sum_{k=0}^{T-1} \sum_{r=0}^\8 \sum_{n=0}^\8  (-1)^{n} \frac{\omega^{-kr+k}}{(u-\omega^{-k} z_i)^{r+1}}
\frac{1}{z_i^{n+1}} \sigma^k F[n+r]_{z_i}
 -  \frac T u \sum_{i=1}^N \sum_{n=0}^\8  \frac {(-1)^n}{z_i^{n+1}} \Pi_{-1} F[n]_{z_i}.
\end{multline}
The terms acting at the sites 0 and $\8$ in $F(u)/u$ are clearly
\be T\sum_{n=0}^\8 \frac{1}{u^{n+2}} (\Pi_{n} F)[n]_0 + T \sum_{n=0}^\8 u^{n-1} (\Pi_{-n-1} F)[-n-1]_\8 \nn\ee
Now we collect terms, and use $\sigma^k \Pi_m = \omega^{km} \Pi_m$, $\sigma^k \mc H_{i,p} = \mc H_{i,p}$, and the fact that $\Pi_kF = 0$ for all $k\in \ZT\setminus \{0\}$, to obtain the result.

\section{$Y_W$-map} \label{app: YWmap}

In this appendix we use the notion of a vertex Lie algebra and related concepts as defined in \cite{VY2}. In particular, all our vertex Lie algebras are finitely generated as $\C[\D]$-modules, of the form $\C[\D] \otimes (L^o \oplus \C \cent) \big/ \C[\D] \otimes \C \cent$, where $L^o$ is a finite dimensional vector space and $\D$ is the translation operator of the vertex Lie algebra. To any such vertex Lie algebra $\vla$ we can associate a genuine Lie algebra, denoted $\U(\vla)$, which is isomorphic as a vector space to $L^o \otimes \C((t)) \oplus \C \cent$. The element in $\U(\vla)$ corresponding to $a \otimes t^n$ with $a \in L^o$ and $n \in \Z$ is denoted $a(n) \in \U(\vla)$. We also have a vector space direct sum decomposition $\U(\vla) = \U^-(\vla) \dotplus \U^+(\vla)$ where $\U^\pm(\vla)$ are Lie subalgebras isomorphic as vector spaces to $L^o \otimes \C[[t]] \oplus \C \cent$ and $L^o \otimes t^{-1} \C[t^{-1}]$ respectively. Denote by $\ueva$ the vacuum Verma module over $\U(\vla)$, namely the $\U(\vla)$-module induced from the one-dimensional $\U^+(\vla)$-module $\C \vac$ on which $\cent$ acts as $1$ and $L^o \otimes \C[[t]]$ acts trivially. Finally, let $\U(\vla)^\Gamma$ denote the subalgebra of $\Gamma$-invariant elements, where the action of $\Gamma$ on $\U(\vla)$ is defined by letting $\alpha \in \Gamma$ send any $a(n)$ with $a \in \vla$ and $n \in \Z$ to $\alpha^{-n-1} (R_\alpha a)(n)$. Given any $a \in \vla$ and $n \in \Z$ we define the corresponding \emph{twisted $n^{\rm th}$-mode}
\begin{equation*}
a^\Gamma(n) := \sum_{\alpha \in \Gamma} \alpha^{-n-1} (R_\alpha a)(n) \in \U(\vla)^\Gamma.
\end{equation*}

\subsection{`Local' and `global' Lie algebras}

Fix a set $\bm x := \{ x_i \}_{i=1}^p$ of $p \in \Z_{\geq 0}$ non-zero points in the complex plane. We attach to each point in $\bm x$ a local copy of the Lie algebra $\U(\vla)$, which we denote $\U(\vla)_{x_i}$. We will also use the index $x_i$ on the formal modes $a(n)$ when we wish to emphasise that these belong to $\U(\vla)_{x_i}$. Simiarly, to the origin we attach a local copy $\U(\vla)_0^\Gamma$ of the $\Gamma$-invariant subalgebra $\U(\vla)^\Gamma$ and to infinity we attach a local copy $\U(\vla^{\rm op})_\infty^\Gamma$ of the opposite Lie algebra $\U(\vla^{\rm op})^\Gamma$, where $\vla^{\rm op}$ denotes the vertex Lie algebra defined as the $\C[\D]$-module $\vla$ but with opposite $n^{\rm th}$-products. Consider their direct sum $\U(\vla^{\rm op})^\Gamma_{\infty} \oplus \bigoplus_{i=1}^p \U(\vla)_{x_i} \oplus \U(\vla)^\Gamma_0$. We define the ideal
\begin{equation} \label{Ip ideal}
I_{\infty, p, 0} := \text{span}_\C \big\{ \cent(-1)_{x_i} - T \cent(-1)_\infty \big\}_{i=1}^p \cup \big\{ \cent(-1)_0 - \cent(-1)_\infty \big\},
\end{equation}
and the corresponding quotient Lie algebra
\begin{equation*}
\U(\vla)_{\infty, \bm x, 0} := \U(\vla^{\rm op})^\Gamma_{\infty} \oplus \bigoplus_{i=1}^p \U(\vla)_{x_i} \oplus \U(\vla)^\Gamma_0 \bigg/ I_{\infty, p, 0}.
\end{equation*}

Let $\C_{\infty, \Gamma \bm x, 0}(t)$ be the algebra of global rational functions with poles at most at $0$, $\infty$ and the points in $\Gamma \bm x$. It comes equipped with the derivation $\partial_t$ and an action of $\Gamma$ defined through pullback by the multiplication map $t \mapsto \alpha^{-1} t$ for $\alpha \in \Gamma$. Consider the associated Lie algebra $\Lie_{\C_{\infty, \Gamma \bm x, 0}(t)} \vla$ defined with the help of \cite[Lemma 2.2]{VY2} for $\mathcal A = \C_{\infty, \Gamma \bm x, 0(t)}$. We will denote by $a f := \rho(a \otimes f)$ the class in $\Lie_{\C_{\infty, \Gamma \bm x, 0}(t)} \vla$ of an element $a \otimes f \in \vla \otimes \C_{\infty, \Gamma \bm x, 0}(t)$. The action of $\Gamma$ on $\vla \otimes \C_{\infty, \Gamma \bm x, 0}(t)$ defined for any $\alpha \in \Gamma$ by
\begin{equation*}
\alpha . (a \otimes f(t)) := \alpha^{-1} R_\alpha a \otimes f(\alpha^{-1} t),
\end{equation*}
gives rise to an action on $\Lie_{\C_{\infty, \Gamma \bm x, 0}(t)} \vla$ by Lie algebra automorphisms, \emph{cf.} \cite[Lemma 2.13]{VY2}.

Consider the ideal in $\Lie_{\C_{\infty, \Gamma \bm x, 0}(t)} \vla$ defined by
\begin{equation} \label{Ix ideal}
I_{\infty, \Gamma \bm x, 0} := \text{span}_{\C} \bigg\{ \sum_{\alpha \in \Gamma} \frac{\cent}{t - \alpha x} \bigg\}_{x \in \bm x \cup \{ 0 \}} = \text{span}_{\C} \bigg\{ \sum_{\alpha \in \Gamma} \frac{\cent}{t - \alpha x_i} \bigg\}_{i=1}^p \cup \left\{ \frac{T \cent}{t} \right\},
\end{equation}
and denote the corresponding quotient Lie algebra by
\begin{equation*}
\U_{\infty, \Gamma \bm x, 0}(\vla) := \Lie_{\C_{\infty, \Gamma \bm x, 0}(t)} \vla \Big/ I_{\infty, \Gamma \bm x, 0}.
\end{equation*}
Noting that the ideal \eqref{Ix ideal} is invariant under the action of $\Gamma$, \emph{i.e.} $\Gamma . I_{\infty, \Gamma \bm x, 0} = I_{\infty, \Gamma \bm x, 0}$, we obtain a well-defined action of $\Gamma$ on the quotient $\U_{\infty, \Gamma \bm x, 0}(\vla)$. We denote the corresponding subalgebra of $\Gamma$-invariants by
\begin{equation} \label{Gamma L vla}
\U^\Gamma_{\infty, \bm x, 0}(\vla) := \big( \U_{\infty, \Gamma \bm x, 0}(\vla) \big)^\Gamma.
\end{equation}

\begin{prop} \label{prop: Lie embedding}
There is an embedding of Lie algebras
\begin{equation} \label{iota map}
\bm \iota : \U^\Gamma_{\infty, \bm x, 0}(\vla) \longhookrightarrow \U(\vla)_{\infty, \bm x, 0}.
\end{equation}
\begin{proof}
Taking the Laurent expansion of an element of $\Lie_{\C_{\infty, \Gamma \bm x, 0}(t)} \vla$ at $0$, $\infty$ and the points in $\bm x$ yields an embedding of Lie algebras
\begin{align} \label{iota map def}
\bm \iota : \Lie_{\C_{\infty, \Gamma \bm x, 0}(t)} \vla &\longhookrightarrow \U(\vla^{\rm op} )_{\infty} \oplus \bigoplus_{i=1}^p \U(\vla)_{x_i} \oplus \U(\vla)_0\\
a f &\longmapsto \big( - \rho(a \otimes \iota_{t^{-1}} f), \rho(a \otimes \iota_{t - x_1} f), \ldots, \rho(a \otimes \iota_{t - x_p} f), \rho(a \otimes \iota_t f) \big). \notag
\end{align}
Note that under the embedding \eqref{iota map def} we have $\bm \iota \big( \sum_{\alpha \in \Gamma} \frac{\cent}{t - \alpha x} \big) = \cent(-1)_x - T \cent(-1)_\infty$ for every $x \in \bm x$ and $\bm \iota \big( \frac{T \cent}{t} \big) = T \cent(-1)_0 - T \cent(-1)_\infty$ from which we deduce that the ideal $I_{\infty, \Gamma \bm x, 0} \subset \Lie_{\C_{\infty, \Gamma \bm x, 0}(t)} \vla$ defined in \eqref{Ix ideal} is mapped to the ideal $I_{\infty, p, 0} \subset \U(\vla^{\rm op} )_{\infty} \oplus \bigoplus_{i=1}^p \U(\vla)_{x_i} \oplus \U(\vla)_0$ defined in \eqref{Ip ideal}. By \cite[Lemma 2.9]{VY2} it follows that \eqref{iota map def} induces an embedding of quotient Lie algebras
\begin{equation*}
\bm \iota : \U_{\infty, \Gamma \bm x, 0}(\vla) \longhookrightarrow \U(\vla^{\rm op})_{\infty} \oplus \bigoplus_{i=1}^p \U(\vla)_{x_i} \oplus \U(\vla)_0 \bigg/ I_{\infty, p, 0}.
\end{equation*}
Finally, by restricting the latter to the subalgebra of $\Gamma$-invariants \eqref{Gamma L vla} we obtain the desired Lie algebra embedding \eqref{iota map}.
\end{proof}
\end{prop}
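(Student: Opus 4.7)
The plan is to construct the embedding $\bm \iota$ by taking Laurent expansions of global sections at the marked points $0$, $\infty$ and at each $x_i \in \bm x$, and then to factor through the two layers of quotients (first by the central ideal, then by passing to $\Gamma$-invariants). Concretely, I would first define an auxiliary map
\begin{equation*}
\bm\iota : \Lie_{\C_{\infty, \Gamma \bm x, 0}(t)} \vla \longrightarrow \U(\vla^{\op})_{\infty} \oplus \bigoplus_{i=1}^p \U(\vla)_{x_i} \oplus \U(\vla)_0,
\end{equation*}
sending a representative $a f$ to $\big(-\rho(a \otimes \iota_{t^{-1}} f);\ \rho(a \otimes \iota_{t-x_i} f)_{i=1}^p;\ \rho(a\otimes \iota_t f)\big)$, with the opposite sign at $\infty$ chosen to match the $\op$-convention there. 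Because each expansion map $\iota_{t-x}$ (respectively $\iota_{t^{-1}}$) is an injective algebra homomorphism $\C_{\infty,\Gamma\bm x,0}(t) \hookrightarrow \C((t-x))$ (resp.\ $\C((t^{-1}))$), the combined map is injective.

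Next I would verify that this is a Lie algebra homomorphism. This reduces, via \cite[Lemma 2.2]{VY2}, to checking compatibility of the brackets with the $n$th-products in $\vla$ on each local factor; since the $\iota_{t-x}$ are algebra homomorphisms and the bracket is local (supported at the point of expansion), this is straightforward. The opposite-algebra structure at $\infty$ is exactly what is needed for this compatibility with the sign flip. I would then show that the map descends to the quotients of Proposition~\ref{prop: Lie embedding}. The key computation is that under $\bm\iota$ a generator $\sum_{\alpha\in\Gamma} \cent/(t-\alpha x)$ of $I_{\infty,\Gamma\bm x,0}$ expands, at each local chart, to the residue of that rational function times $\cent$: these residues sum correctly (by the usual sum-of-residues argument, cf.\ \eqref{zr}) to $\cent(-1)_{x} - T\cent(-1)_\infty$ for $x \in \bm x$, and to $T\cent(-1)_0 - T\cent(-1)_\infty$ for the generator $T\cent/t$. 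Both of these lie in the ideal $I_{\infty,p,0}$, so $\bm\iota$ induces a Lie algebra embedding on the quotients.

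Finally I would restrict to the $\Gamma$-invariant subalgebra on the source. The action of $\Gamma$ on $\Lie_{\C_{\infty,\Gamma\bm x,0}(t)} \vla$ permutes expansions at points in $\Gamma\bm x$ and intertwines with the $\Gamma$-action at the fixed points $0$ and $\infty$; so $\Gamma$-invariant global sections expand to $\Gamma$-invariant collections of local modes, landing in the fixed-point subalgebras $\U(\vla^{\op})^\Gamma_\infty$ and $\U(\vla)^\Gamma_0$ at the fixed points. The main obstacle to check carefully is bookkeeping: one must verify both that the central-extension residues combine to land in $I_{\infty,p,0}$ with the right normalizations (the factor of $T$ at $0$ and $\infty$ coming from summing the $\Gamma$-orbit), and that the opposite structure at $\infty$ and the sign in the definition of $\bm\iota$ are compatible with the bracket. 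Once these are matched, injectivity on $\Gamma$-invariants is immediate from injectivity of $\bm\iota$ before passing to $\Gamma$-invariants.
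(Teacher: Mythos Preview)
Your proposal is correct and follows essentially the same route as the paper: define $\bm\iota$ by Laurent expansion at $0$, $\infty$ and the $x_i$, compute the images of the central generators to see that $I_{\infty,\Gamma\bm x,0}$ lands in $I_{\infty,p,0}$, pass to quotients, and finally restrict to $\Gamma$-invariants. The only difference is that the paper appeals to \cite[Lemma 2.9]{VY2} for the step that the induced map on quotients remains an embedding, whereas you assert this directly; your argument is otherwise the same.
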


By virtue of Proposition \ref{prop: Lie embedding}, if $\M_{x_i}$, $i =1, \ldots, p$ is a collection of left modules over $\U(\vla)_{x_i}$ and if $\M_0$ and $\M_\infty$ are left modules over $\U(\vla)_0^\Gamma$ and $\U(\vla^{\rm op})^\Gamma_\infty$ respectively, or alternatively $\M_\infty$ is a \emph{right} module over $\U(\vla)^\Gamma_\infty$, then the tensor product $\M_\infty \otimes \bigotimes_{i=1}^p \M_{x_i} \otimes \M_0$ becomes a left module over the global Lie algebra $\U^\Gamma_{\infty, \bm x, 0}(\vla)$. In particular, we may form the space of coinvariants
\begin{equation*}
\M_\infty \otimes \bigotimes_{i=1}^p \M_{x_i} \otimes \M_0 \bigg/ \U^\Gamma_{\infty, \bm x, 0}(\vla).
\end{equation*}
Many of the statements and proofs of \cite{VY2} concerning cyclotomic coinvariants which did not include the point at infinity can be seen to carry over with minor modifications to the present case.

\subsection{$Y_W$-map}

A left module $M$ over $\U(\vla)$ is said to be \emph{smooth} if for all $a \in \vla$ and $v \in M$ we have $a(n) v = 0$ for all $n \gg 0$. Likewise, we will say that a right module $N$ over $\U(\vla)$ is \emph{co-smooth} if for all $a \in \vla$ and $\eta \in N$ we have $\eta \, a(n) = 0$ for all $n \ll 0$.

Let $M_\infty$ be any right co-smooth module over $\U(\vla)^\Gamma$. We define the quasi-module map
\begin{equation*}
Y_W(\cdot, u) : \ueva \longrightarrow \text{Hom}\big(\M_\infty, \M_\infty((u^{-1}))\big), \qquad
A \longmapsto \sum_{n \in \Z} A^W_{(n)} u^{-n-1}
\end{equation*}
where $u$ is a formal variable and $A^W_{(n)}$ are endomorphisms of $\M_\infty$ for each $A \in \ueva$ and $n \in \Z$, by direct analogy with the definition given in \cite{VY2} for left smooth modules over $\U(\vla)^\Gamma$ as follows. For any $a \in \vla$ we set
\begin{equation*}
Y_W(a(-1)\vac, u) := \sum_{\alpha \in \Gamma} \sum_{n \in \mathbb{Z}} (R_\alpha a)(n) (\alpha u)^{-n-1}.
\end{equation*}
Moreover, the map is defined recursively for any other state in $\ueva$ by letting
\begin{equation} \label{YW rec}
Y_W(a(-1) B, u) := \; \nord{Y_W(a(-1) \vac, u) Y_W(B, u)} + \sum_{\alpha\in\Gamma \setminus \{ 1 \}} \sum_{n \geq 0} \frac{1}{((\alpha - 1) u)^{n+1}} Y_W\big( (R_\alpha a)(n) B, u \big),
\end{equation}
for all $a \in \vla$ and $B \in \ueva$.
Here $\nord{A(u) B(u)} \; = A(u)_+ B(u) + B(u) A(u)_-$ denotes the usual normal ordering where for $A(u) = \sum_{n \in \mathbb{Z}} A(n) u^{-n-1}$ we define
\begin{subequations}
\begin{align}
\label{YW+} A(u)_+ &= \sum_{n < 0} A(n) u^{-n-1} = \sum_{m \geq 0} A(-m-1) u^m,\\
\label{YW-} A(u)_- &= \sum_{n \geq 0} A(n) u^{-n-1}.
\end{align}
\end{subequations}

The following is a direct analogue of \cite[Proposition 3.6]{VY2} for the point at infinity, providing an alternative definition the above $Y_W$-map on co-smooth modules using co-invariants.

\begin{prop}
Let $\M_\infty$ be a right co-smooth module over $\U(\vla)^\Gamma$. We have
\begin{equation} \label{Y-map swap inf}
- \iota_{u^{-1}} [\atp u A \otimes \atp \infty {m_\infty} \otimes \cdots] 
= \big[ \atp \infty {m_\infty} Y_W(A, u) \otimes \cdots \big],
\end{equation}
for all $A\in \ueva$ and $m_\infty \in \M_\infty$.
\begin{proof}
We use induction on the depth of $A$. When $A = \vac$ the result follows from the analogue of \cite[Proposition 3.2]{VY2} including the point at infinity. For the inductive step, we assume that \eqref{Y-map swap inf} holds for states of depth strictly less than that of $A$. Without loss of generality we can take the state $A$ to be of the form $A = a(-1) B$ for some $a \in \vla$ and $B \in \ueva$. 

Let $\M_{x_i}$, $i = 1, \ldots, p$ be any collection of $p \in \Z_{\geq 0}$ modules attached to the points $x_i \in \C$ for $i =1,\ldots,p$ which may include the origin. We write $\bm x := \{ x_i \}_{i=1}^p$ and let $\bm m \in \bigotimes_{i=1}^p \M_{x_i}$.
By definition of the space of coinvariants we have
\begin{equation*}
\big[ f(t) \on (\atp u B \otimes \atp \infty {m_\infty} \otimes \atp{\bm x}{\bm m}) \big] =0, \quad\text{where}\quad 
f(t) = \sum_{\alpha \in \Gamma} \frac{\alpha^{-1} R_{\alpha} a}{\alpha^{-1} t - u}.
\end{equation*}
In other words,
\begin{equation*}
\big[ \iota_{t - u} f(t) \atp u B \otimes \atp \infty {m_\infty} \otimes \atp{\bm x}{\bm m} \big]
- \big[ \atp u B \otimes \atp \infty {m_\infty} \big( \iota_{t^{-1}} f(t) \big) \otimes \atp{\bm x}{\bm m} \big]
+ \sum_{i=1}^p \big[ \atp u B \otimes \atp \infty {m_\infty} \otimes \iota_{t - x_i} f(t) \atp{\bm x}{\bm m} \big] = 0.
\end{equation*}
Thus the left hand side of \eqref{Y-map swap inf} may be written as
\begin{align*}
- &\iota_{u^{-1}} [\atp u{a(-1) B} \otimes \atp \infty{m_\infty} \otimes \atp{\bm x}{\bm m}]
= - \iota_{u^{-1}} \bigg[  {\sum_{\alpha \neq 1} \sum_{n \geq 0} \frac{(R_{\alpha} a)(n)}{((\alpha - 1) u)^{n+1}} \atp uB} \otimes \atp \infty{m_\infty} \otimes \atp{\bm x}{\bm m} \bigg]\\
&- \iota_{u^{-1}} \bigg[ \atp uB \otimes \atp \infty{m_\infty} Y_W(a(-1)\vac, u)_+ \otimes \atp{\bm x}{\bm m} \bigg]
- \iota_{u^{-1}} \bigg[ \atp uB \otimes \atp \infty{m_\infty} \otimes \sum_{i=1}^p \sum_{\alpha \in \Gamma} \sum_{n \geq 0} \frac{(R_{\alpha} a)(n)_{x_i}}{(\alpha u - x_i)^{n+1}} \atp{\bm x}{\bm m} \bigg].
\end{align*}
The states at the point $u$ on the right hand side are of lower depth in $\ueva$, so that we may apply the inductive hypothesis to obtain
\begin{align*}
- &\iota_{u^{-1}} [\atp u{a(-1) B} \otimes \atp \infty{m_\infty} \otimes \atp{\bm x}{\bm m}]
= \sum_{\alpha \neq 1} \sum_{n \geq 0} \frac{1}{((\alpha - 1) u)^{n+1}} \bigg[ \atp \infty{m_\infty} Y_W\big( (R_{\alpha} a)(n) B, u \big) \otimes \atp{\bm x}{\bm m} \bigg]\\
&+ \bigg[ \atp \infty{m_\infty} Y_W(a(-1)\vac, u)_+ Y_W(B, u) \otimes \atp{\bm x}{\bm m} \bigg]
+ \iota_{u^{-1}} \bigg[ \atp \infty{m_\infty} Y_W(B, u) \otimes \sum_{i=1}^p \sum_{\alpha \in \Gamma} \sum_{n \geq 0} \frac{(R_{\alpha} a)(n)_{x_i}}{(\alpha u - x_i)^{n+1}} \atp{\bm x}{\bm m} \bigg].
\end{align*}
Here we made use of the notation \eqref{YW+}. The first two terms on the right hand side are already in the desired form. Taking the map $\iota_{u^{-1}}$ explicitly in the remaining term we may rewrite it as
\begin{align} \label{swap 0 xi}
&\bigg[ \atp \infty {m_\infty} Y_W[B, u] \otimes \sum_{i=1}^p \sum_{\alpha \in \Gamma} \sum_{m \geq 0} \sum_{n=0}^m \binom{m}{n} \frac{x_i^{m-n}}{(\alpha u)^{m+1}} (R_{\alpha} a)(n)_{x_i} \atp{\bm x}{\bm m} \bigg]. 
\end{align}
Now consider the following identity
\begin{equation*}
\sum_{m \geq 0} u^{-m-1} \Big[ g_m(t) \on \big( \atp \infty {m_\infty} Y_W(B, u) \otimes \atp{\bm x}{\bm m} \big) \Big] = 0, \quad\text{where}\quad
g_m(t) = \sum_{\alpha \in \Gamma} \alpha^{-1} (R_{\alpha} a) (\alpha^{-1} t)^m.
\end{equation*}
Using this we may rewrite \eqref{swap 0 xi} simply as
\begin{equation*}
\sum_{m \geq 0} u^{-m-1} \Big[ \atp \infty {m_\infty} Y_W(B, u) \big( \iota_{t^{-1}} g_m(t) \big) \otimes \atp{\bm x}{\bm m} \Big]
= \Big[ \atp \infty {m_\infty} Y_W(B, u) Y_W(a(-1) \vac, u)_- \otimes \atp{\bm x}{\bm m} \Big].
\end{equation*}
Putting the above together and using the recurrence relation \eqref{YW rec} of the $Y_W$-map we obtain
\begin{equation*}
\iota_{u^{-1}} [ a(-1) \atp u B \otimes \atp \infty {m_{\infty}} \otimes \atp{\bm x}{\bm m}] = \big[ \atp \infty {m_{\infty}} Y_W(a(-1) B, u) \otimes \atp{\bm x}{\bm m} \big],
\end{equation*}
as required.
\end{proof}
\end{prop}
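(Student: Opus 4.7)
The plan is to prove this by induction on the depth of $A\in\ueva$, in direct analogy with the proof of \cite[Proposition 3.6]{VY2} cited in the text, but carefully tracking the extra contributions and sign conventions arising from the marked point at infinity. The two ingredients that will do all the work are the coinvariance identity for suitably chosen $\Gamma$-equivariant rational functions in $\U^\Gamma_{\infty,\bm x\cup\{u\},0}(\vla)$, and the recursive definition \eqref{YW rec} of $Y_W$.

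For the base case $A=\vac$, we have $Y_W(\vac,u)=\id$, and the statement reduces to the claim that an insertion of the vacuum at the auxiliary marked point $u$ can be removed from any coinvariant; this is the $\Gamma$-equivariant and infinity-aware analogue of \cite[Proposition 3.2]{VY2}, which I would establish first using exactly the same argument as in \emph{loc.\ cit.} with the sign and factor of $t^2$ in the $\iota_{t^{-1}}$ slot of Proposition \ref{prop: Lie embedding} put in by hand. For the inductive step, write $A=a(-1)B$ with $a\in\vla$ and $B\in\ueva$ of strictly lower depth. I would then act on the class $[\atp u B\otimes\atp\infty{m_\infty}\otimes \atp{\bm x}{\bm m}]$ with the $\Gamma$-equivariant function
\begin{equation*}
f(t)=\sum_{\alpha\in\Gamma}\frac{\alpha^{-1}R_\alpha a}{\alpha^{-1}t-u}\in\U^\Gamma_{\infty,\bm x\cup\{u\},0}(\vla)
\end{equation*}
and use $[f(t)\cdot(\cdots)]=0$.

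Expanding $f(t)$ at $u$, at $\infty$, and at each $x_i$ produces three types of contributions. The $\iota_{t-u}$ expansion yields $a(-1)_u$ acting on $B$ (the desired leading term) together with correction modes $(R_\alpha a)(n)_u B$ for $\alpha\ne 1$, $n\geq 0$, which are strictly lower in depth and to which the inductive hypothesis applies. The $\iota_{t^{-1}}$ expansion, after careful bookkeeping of the minus sign and factor of $t^2$, yields precisely the ``positive part'' $Y_W(a(-1)\vac,u)_+$ acting on $m_\infty$ from the right. The $\iota_{t-x_i}$ expansions produce terms $\frac{(R_\alpha a)(n)_{x_i}}{(\alpha u-x_i)^{n+1}}$ acting on $\bm m$, with $B$ still inserted at $u$; applying the inductive hypothesis to these moves the $B$-insertion out to infinity, and a second $\Gamma$-equivariant swap using
\begin{equation*}
g_m(t)=\sum_{\alpha\in\Gamma}\alpha^{-1}(R_\alpha a)(\alpha^{-1}t)^m,\qquad m\geq 0,
\end{equation*}
summed against $u^{-m-1}$, converts the residual $x_i$-contributions into $Y_W(B,u)\,Y_W(a(-1)\vac,u)_-$. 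Assembling the three pieces and invoking \eqref{YW rec} yields $[m_\infty Y_W(a(-1)B,u)\otimes\bm m]$, closing the induction.

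The main obstacle, I expect, will be the sign and prefactor bookkeeping coming from the point at infinity: the opposite Lie algebra structure on $\U(\vla^{\op})^\Gamma_\infty$, the $-t^2$ in the $\iota_{t^{-1}}$ slot of the embedding $\bm\iota$, and the overall $-\iota_{u^{-1}}$ on the left of \eqref{Y-map swap inf} all have to conspire with the positive/negative mode split \eqref{YW+}--\eqref{YW-} so that the $\iota_{t^{-1}}$ contribution reproduces exactly $Y_W(a(-1)\vac,u)_+$ with the correct sign, while the $g_m(t)$ trick reproduces exactly $Y_W(a(-1)\vac,u)_-$. A secondary subtlety is that the $g_m(t)$ swap is \emph{a priori} permitted to contribute at $0$ and at $\infty$ as well; one must verify that the contributions at $\infty$ collapse trivially (on depth/degree grounds, since $g_m(t)$ is polynomial in $t$) and that the $0$-contribution is already accounted for, either absorbed into $\M_0$ or cancelling against the $\infty$ bookkeeping via the ideal \eqref{Ix ideal}.
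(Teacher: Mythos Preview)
Your proposal is correct and follows essentially the same route as the paper's own proof: induction on depth, the same $\Gamma$-equivariant rational function $f(t)=\sum_{\alpha\in\Gamma}\frac{\alpha^{-1}R_\alpha a}{\alpha^{-1}t-u}$ for the first swap, the inductive hypothesis applied to the lower-depth pieces, and the same second swap with $g_m(t)=\sum_{\alpha\in\Gamma}\alpha^{-1}(R_\alpha a)(\alpha^{-1}t)^m$ to produce $Y_W(a(-1)\vac,u)_-$, assembled via the recurrence \eqref{YW rec}. Your anticipated obstacles (sign/prefactor bookkeeping at infinity and possible extra contributions from $g_m$) are precisely the points the paper handles implicitly, and your diagnosis of how they resolve is accurate.
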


Recall that the vacuum Verma module $\ueva$ has the structure of a vertex algebra and so is in particular a vertex Lie algebra. We may thus form the associated Lie algebra $\U(\ueva)$ as well as its subalgebra of $\Gamma$-invariants $\U(\ueva)^\Gamma$.
The following is an immediate generalisation of \cite[Proposition 5.8]{VY2}.

\begin{prop} \label{prop: big co-smooth}
Let $M_\infty$ be a co-smooth right module over $\U(\vla)^\Gamma$. There is a well-defined co-smooth right $\U(\ueva)^\Gamma$-module structure on $M_\infty$ given for all $m_\infty \in M_\infty$ by
\begin{equation*}
m_\infty A^\Gamma(n) := m_\infty A^W_{(n)}.
\end{equation*}
\end{prop}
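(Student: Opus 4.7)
The plan is to mirror the proof of \cite[Proposition 5.8]{VY2}, which established the analogous statement for left smooth modules, adapting each step to the right co-smooth setting. Three things must be verified: (i) the assignment $m_\infty A^\Gamma(n) := m_\infty A^W_{(n)}$ is well-defined, in the sense that it factors through the $\Gamma$-invariance defining $\U(\ueva)^\Gamma$ and respects the defining relations of $\U(\ueva)$; (ii) it satisfies the Lie bracket relations of $\U(\ueva)^\Gamma$, so that it really defines a right module structure; and (iii) the resulting structure is co-smooth.

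For (i), observe that the $Y_W$ map as defined already has the $\Gamma$-symmetrisation built in: at depth one, $Y_W(a(-1)\vac, u) = \sum_{\alpha \in \Gamma} \sum_n (R_\alpha a)(n)(\alpha u)^{-n-1}$, so its modes $\big(a(-1)\vac\big)^W_{(n)}$ agree with the twisted modes $a^\Gamma(n)$ acting through the given right $\U(\vla)^\Gamma$-module structure on $M_\infty$. The recursion \eqref{YW rec} then propagates this to all depths, so the assignment is consistent with the Lie-algebra relations of $\U(\ueva)^\Gamma$ built out of twisted modes.

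For (ii), the efficient route is to exploit the swap identity \eqref{Y-map swap inf}. Place two auxiliary points $u_1, u_2$ carrying states $A_1, A_2 \in \ueva$ and apply \eqref{Y-map swap inf} in the two possible orders to obtain two expressions for the same coinvariant; their difference expresses the commutator $[A_1^W_{(m)}, A_2^W_{(n)}]$ (acting on $m_\infty$ from the right) in terms of $\big(A_1 (p) A_2\big)^W_{(\cdot)}$. This is precisely the $\Gamma$-twisted commutator formula in $\U(\ueva)^\Gamma$, so the assignment respects the Lie bracket. The argument is the dual of the smooth-module calculation in \cite[\S5]{VY2}: the only genuinely new ingredient is the minus sign / $\iota_{t^{-1}}$ convention at $\infty$, which is already accounted for in \eqref{Y-map swap inf}.

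For (iii), co-smoothness reduces to showing that for every $A \in \ueva$ and $m_\infty \in M_\infty$ the series $Y_W(A,u)\, m_\infty$ lies in $M_\infty((u^{-1}))$, \emph{i.e.} truncates from above in powers of $u$. Proceed by induction on the depth of $A$. At depth one, $Y_W(a(-1)\vac, u) m_\infty = \sum_{\alpha \in \Gamma, n} m_\infty (R_\alpha a)(n) (\alpha u)^{-n-1}$, which is a finite sum of Laurent series in $u^{-1}$ by co-smoothness of $M_\infty$ as a $\U(\vla)^\Gamma$-module. The inductive step uses \eqref{YW rec}: the normal-ordered term $\nord{Y_W(a(-1)\vac,u) Y_W(B,u)}$ preserves the $M_\infty((u^{-1}))$ property because $Y_W(a(-1)\vac,u)_+$ is bounded above in $u$ while $Y_W(a(-1)\vac,u)_-$ multiplies $Y_W(B,u)m_\infty$ from the right and therefore interacts with the right action via a further application of co-smoothness; the residual sum over $\alpha \neq 1$ is manifestly polynomial in $u^{-1}$.

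The step I expect to be the main technical point is the careful bookkeeping of the normal ordering in (iii): in the right-module convention one must verify that $Y_W(a(-1)\vac, u)_-$ acts from the right on $Y_W(B,u)\,m_\infty$ without producing unbounded positive powers of $u$. Once this bounded-below property in $u^{-1}$ is established the rest of the proof is essentially formal, identical in spirit to \cite[Proposition 5.8]{VY2} after exchanging left actions for right actions and $\iota_{t-z_i}$-style expansions for $-\iota_{t^{-1}}$-style ones at infinity.
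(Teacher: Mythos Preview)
Your proposal is correct and matches the paper's approach exactly: the paper gives no explicit proof of this proposition, simply flagging it as ``an immediate generalisation of \cite[Proposition 5.8]{VY2}'', and your three-step outline (well-definedness of the twisted modes via the built-in $\Gamma$-symmetrisation of $Y_W$, the bracket relation via the swap identity \eqref{Y-map swap inf}, and co-smoothness by induction on depth using the recursion \eqref{YW rec}) is precisely the dualisation of that argument to the right co-smooth setting at infinity. Your identification of the normal-ordering bookkeeping as the only place requiring care is also accurate, and your reasoning there is sound.
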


\def\cprime{$'$}
\providecommand{\bysame}{\leavevmode\hbox to3em{\hrulefill}\thinspace}
\providecommand{\MR}{\relax\ifhmode\unskip\space\fi MR }
\providecommand{\MRhref}[2]{%
  \href{http://www.ams.org/mathscinet-getitem?mr=#1}{#2}
}
\providecommand{\href}[2]{#2}

\end{document}